\documentclass[a4paper, 11pt]{article}
\usepackage[utf8]{inputenc}
\usepackage[T1]{fontenc}
\usepackage{textcomp}
\usepackage{amsmath, amssymb, bbm}
\usepackage{amsthm}
\usepackage[hmargin= 2.5 cm]{geometry}
\usepackage[colorlinks=true]{hyperref}
\usepackage[french]{babel}
\usepackage{mathrsfs}
\frenchbsetup{StandardLists=true}
\usepackage{tikz, pgf}
\usetikzlibrary{decorations}
\usetikzlibrary{decorations.pathmorphing}
\usetikzlibrary{shapes.geometric}
\usetikzlibrary{calc}
\usetikzlibrary{arrows}
\renewcommand{\textbf}[1]{\begingroup\bfseries\mathversion{bold}#1\endgroup}

\title{Classification of traveling waves for a quadratic Szeg\H{o} equation}
\author{Joseph \bsc{Thirouin}}
\date{February 6\up{th}, 2018}

\DeclareMathOperator{\im}{Im}
\DeclareMathOperator{\re}{Re}
\DeclareMathOperator{\tr}{Tr}

\DeclareMathOperator{\rg}{rk}

\DeclareMathOperator{\ran}{Ran}

\newtheorem{thm}{Theorem}
\newtheorem{prop}{Proposition}[section]
\newtheorem{lemme}[prop]{Lemma}
\newtheorem{cor}[prop]{Corollary}
\theoremstyle{definition}
\newtheorem*{déf}{Definition}
\theoremstyle{remark}
\newtheorem{rem}{Remark}
\newtheorem*{eg}{Example}

\usepackage{enumitem}
\setenumerate[1]{label=(\roman*)}

\begin{document}

\renewcommand{\refname}{Bibliography}
\renewcommand{\abstractname}{Abstract}
\maketitle

\begin{abstract}
We give a complete classification of the traveling waves of the following quadratic Szeg\H{o} equation :
\[i \partial_t u = 2J\Pi(|u|^2)+\bar{J}u^2, \quad u(0, \cdot)=u_0,\]
and we show that they are given by two families of rational functions, one of which is generated by a stable ground state. We prove that the other branch is orbitally unstable. 
\par
\textbf{MSC 2010 :} 37K10, 35C07, 37K45
\par
\textbf{Keywords :} 
 Hamiltonian systems, Szeg\H{o} equation, traveling waves, instability.
\end{abstract}

\section{Introduction}

This paper is devoted to the continuation of the study of the following Hamiltonian PDE :
\begin{equation}\label{quad}
i\partial_t u=2J\Pi(|u|^2)+\bar{J}u^2,
\end{equation}
where $u:\mathbb{R}_t\times\mathbb{T}_x\to \mathbb{C}$, the factor $J=J(u):=\int_{\mathbb{T}}|u|^2u\in\mathbb{C}$, and $\Pi$ is the Szeg\H{o} projector onto nonnegative frequencies :
\[ \Pi \left( \sum_{k=-\infty}^{+\infty}u_ke^{ikx}\right)=\sum_{k\geq 0}u_ke^{ikx}.\]
The framework of such an evolution equation was first explored by Gérard and Grellier in their work on the so-called \emph{cubic Szeg\H{o} equation} :
\begin{equation}\label{cubic}
i\partial_t u=\Pi(|u|^2u).
\end{equation}
In a vast series of papers \cite{ann, GGtori, explicit, livrePG}, these authors have shown that equation \eqref{cubic} enjoys very rich dynamical properties, which can be summarized in two words : ``integrability'' and ``turbulence''. Such a discovery paves the way for the study of a various range of other non-dispersive equations, among which the conformal flow on $\mathbb{S}^3$ \cite{Bizon, BizonGround} or the cubic lowest Landau level (LLL) equation \cite{LLL}. In this spirit, our aim would be to understand the behavior of solutions of equation \eqref{quad}, which shares some features with \eqref{cubic} but is far less understood by now.

Let us underline some basic facts about \eqref{quad}. To stick to the notations of \cite{livrePG}, we denote by $L^2_+(\mathbb{T})=\Pi(L^2(\mathbb{T}))$ the Hardy space on the disc (or simply $L^2_+$), and if $G$ is a subspace of $L^2(\mathbb{T})$, $G_+$ will designate the intersection $G\cap L_+^2$. It is well-known that elements of $L_+^2$ can either be considered as Fourier series with only nonnegative modes, or as holomorphic functions $u(z)$ on the open unit disc $\mathbb{D}=\{z\in\mathbb{C}\mid |z|<1\}$ satisfying
\[ \sup_{r\to 1^-} \frac{1}{2\pi}\int_0^{2\pi}|u(re^{i\theta})|^2d\theta <+\infty.\]
The usual scalar product on $L^2$ can be restricted to $L^2_+$, with the convention that for $u,v\in L^2_+$, we have $(u\vert v)=\int_\mathbb{T} u\bar v$ (with respect to the normalized Lebesgue measure $d\theta/2\pi$). Through the standard symplectic form on $L^2_+$ given by $\omega (u,v) := \im (u\vert v)$, we can see \eqref{quad} as the Hamiltonian flow associated to the functional
\[ E(u):=\frac{1}{2}|J|^2= \frac{1}{2}\left| \int_\mathbb{T} |u|^2u \right| ^2.\]
Since the energy $E$ is conserved along flow lines, the modulus of $J$ remains constant, so \eqref{quad} can be regarded as a quadratic system, and we call it therefore the \emph{quadratic Szeg\H{o} equation}. The invariance of $E$ under phase rotation ($u \to e^{-i\theta}u$) and phase translation ($u\to u(ze^{-i\alpha})$) yields two other (formal) conservation laws : the mass $Q(u):=\int_{\mathbb{T}}|u|^2$ and the momentum $M(u):=(-i\partial_xu\vert u)$.

In \cite{thirouin2}, several properties of \eqref{quad} were enlightened, relying on the crucial fact that \eqref{quad} admits a \emph{Lax pair}. Firstly, as in the case of the cubic Szeg\H{o} equation \cite{GKoch}, equation \eqref{quad} admits a well-defined flow on $BMO_+(\mathbb{T})$, where $BMO(\mathbb{T})$ stands for the space of functions with \emph{bounded mean oscillation} introduced by John and Nirenberg. This flow propagates every additional regularity, in the sense that if the initial datum also belongs to $H^s_+(\mathbb{T})$ for some $s>0$, then the corresponding solution will stay in $H^s_+$ for all time. We thus recover a flow on the natural energy space $H^{1/2}_+$ where $E$, $Q$ and $M$ are all well-defined. Secondly, we showed the existence of finite-dimensional submanifolds of $L^2_+$ consisting of rational functions of $z$ which are stable by the flow of \eqref{quad}, and where weak turbulence phenomena can occur, in contrast with what happens for the cubic Szeg\H{o} equation \eqref{cubic}. More precisely, we have proved that there is an invariant manifold of homographies starting of which the solutions of \eqref{quad} in $C^\infty_+(\mathbb{T})$ are bounded in the $H^{1/2}$ norm, but grow exponentially fast in the $H^s$ topology for any $s>1/2$.

The purpose of the present work is to investigate another typical feature of integrable systems : we intend to give a complete classification of all traveling waves that are solutions of equation \eqref{quad} in $H^{1/2}_+$, and to discuss their stability. Recall that a traveling wave is a solution $v(t,z)$ of the Cauchy problem associated to \eqref{quad} for which there exists $\omega, c\in \mathbb{R}$ such that
\begin{equation}\label{trav} v(t,z)=e^{-i\omega t}v_0(ze^{-ict}), \quad \forall t\in\mathbb{R},\ \forall z\in\mathbb{D},
\end{equation}
where $v_0=v(0,\cdot)$ is the initial state. We refer to $\omega$ as the \emph{pulsation}, and to $c$ as the \emph{velocity}.

Hereafter, we only look for such solutions with $c\neq 0$ or $\omega \neq 0$, for the set of steady solutions ($\omega =c=0$) coincides with the subset $\{ J(v)=0\}$ of $H^{1/2}_+$ which seems hard to characterize (see Appendix \ref{steady} for a discussion about that point).

\begin{thm}[Classification of traveling waves]\label{main}
The initial state $v_0(z)\in H^{1/2}_+$ gives rise to a traveling wave solution of \eqref{quad} with $(\omega, c)\neq (0,0)$ if and only if it is a constant, or if there exists $\lambda, p\in\mathbb{C}$ with $0<|p|<1$ and an integer $N\geq 1$ such that one of the following holds :
\begin{enumerate}
\item[\textbf{(i)}] \[ v_0(z)=\frac{\lambda}{1-pz^N},\]
in which case
\[\omega = |\lambda|^4\frac{3-|p|^2}{(1-|p|^2)^3} \quad \text{and} \quad c=\frac{|\lambda|^4}{N}\frac{1}{(1-|p|^2)^2};\]
\item[\textbf{(ii)}] \[ v_0(z)=-\lambda\frac{1+|p|^2}{1-|p|^2}+\frac{\lambda}{1-pz^N},\]
in which case
\[\omega = |\lambda|^4|p|^4\frac{(1+5|p|^2)(3+5|p|^2)}{(1-|p|^2)^4} \quad \text{and} \quad c=-\frac{|\lambda|^4}{N}|p|^4\frac{3+5|p|^2}{(1-|p|^2)^3}.\]
\end{enumerate}
\end{thm}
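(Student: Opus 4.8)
The plan is to substitute the traveling-wave ansatz \eqref{trav} into equation \eqref{quad} and derive the stationary profile equation that $v_0$ must satisfy. Plugging $v(t,z)=e^{-i\omega t}v_0(ze^{-ict})$ into \eqref{quad}, and using that $J(v(t))=e^{-i\omega t}J(v_0)$ up to a phase (since $|J|$ is conserved and one checks how $J$ transforms under the two symmetries), one gets an equation of the form
\[
\omega v_0 + c\, x\partial_x v_0 = 2J\,\Pi(|v_0|^2) + \bar J\, v_0^2,
\]
where $J=J(v_0)$. Here I would first dispose of the case $J=0$: then the right-hand side vanishes, forcing $\omega v_0 + c\,x\partial_x v_0=0$, whose only solutions in $L^2_+$ are monomials $v_0=\lambda z^k$ with $\omega=-ck$; but then $J(\lambda z^k)=0$ only if... actually $\int_{\mathbb T}|v_0|^2 v_0 = \lambda|\lambda|^2\int z^{3k}$ which is $0$ unless $k=0$, so the only such traveling wave is the constant. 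So from now on $J\neq 0$, and after a phase rotation and a rescaling in $z$ we may normalize things.

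The heart of the argument is the rigidity of the profile equation. The key tool is the \emph{Lax pair} structure mentioned in the discussion of \cite{thirouin2}: traveling waves are precisely the initial data for which the Lax operator (a Hankel-type or Toeplitz-type operator built from $v_0$) evolves by a simple conjugation, which should force its spectrum to be invariant and, combined with the profile equation, constrain $v_0$ to lie in one of the finite-dimensional invariant manifolds of rational functions already identified. Concretely, I would argue that the profile equation implies $\Pi(|v_0|^2)$ is itself a rational function of controlled degree, hence (by a Kronecker-type theorem on Hankel operators, or by the explicit structure theory from \cite{livrePG, explicit}) $v_0$ is rational with a single pole structure; the substitution $z\mapsto z^N$ accounts for the phase-translation symmetry, reducing to the case $N=1$. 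Writing $v_0(z)=\frac{a+bz}{1-pz}$ as the general degree-one rational function in $L^2_+$, I would plug into the profile equation and match coefficients. This produces a polynomial system in $a,b,p,\omega,c$ whose only solutions are the two announced families — in family (i) the numerator is constant ($b=0$), and in family (ii) the numerator has a specific constant term tied to $p$.

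The main obstacle I anticipate is proving that a traveling wave \emph{must} be rational of degree at most one (after the $z\mapsto z^N$ reduction), i.e.\ ruling out higher-degree rational — or non-rational — profiles. This is where one genuinely needs the integrable structure rather than brute force: one must show that the nonlinear, nonlocal term $2J\Pi(|v_0|^2)+\bar J v_0^2$ being a polynomial-growth-in-$x\partial_x$ perturbation of $v_0$ forces the relevant Hankel operator to have finite rank, and then that the rank is exactly one. I would handle this by examining the action of the Lax operator(s): the profile equation says $v_0$ is an eigenvector-like object, and a careful spectral analysis (the generator of the flow commutes with the Lax operator, so on the invariant manifold the dynamics is finite-dimensional and the traveling-wave condition pins down a single eigenvalue) should collapse the degree. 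Once rationality of degree one is established, the rest is the coefficient-matching computation, which is routine though somewhat lengthy; I would relegate the algebra to a lemma and only record the resulting values of $\omega$ and $c$. Finally, conversely, one checks by direct substitution that the two families \textbf{(i)} and \textbf{(ii)} with the stated $(\omega,c)$ do solve \eqref{quad}, which also serves as a sanity check on the formulas.
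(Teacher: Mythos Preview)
Your skeleton matches the paper's: derive the profile equation, invoke the Lax pair to force rationality, then pin down the profile by spectral analysis. But the two steps you flag as ``the main obstacle'' are exactly where the real content lies, and your sketch does not supply it.

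First, some smaller issues. Your $J=0$ discussion is off: $v_0=\lambda z^k$ with $k\geq 1$ also solves the profile equation (with $\omega=-ck$), but these have $J=0$ and are steady solutions --- the paper simply excludes them by hypothesis. More importantly, you do not separate the standing-wave case $c=0$, which requires its own argument: when $c=0$ the equation reduces to $u=2\Pi(|u|^2)+u^2$, whence $\re u$ takes at most two real values, and one concludes via a Hardy-space zero-set argument that $u$ is constant. This is not a coefficient-matching computation.

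For $c\neq 0$, the paper's route is sharper than what you outline. Rationality comes from the Lax identity $(\varpi+1)K_u=(A_u-D)K_u+K_u(A_u-D)$: since $D-A_u$ is self-adjoint with compact resolvent, each eigenvector $\varepsilon_j$ with eigenvalue $\lambda_j$ is mapped by $K_u$ to an eigenvector with eigenvalue $-\varpi-1-\lambda_j$, and boundedness below of the spectrum forces $K_u\varepsilon_j=0$ for large $j$. After a partial-fraction argument showing all poles are simple with equal residues, one reduces to $u=\sum_{\ell=1}^N(1-p_\ell z)^{-1}\in\mathcal V(2N)$.

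The crucial step is \emph{not} showing ``rank one'': it is showing that $K_u^2$ has a \emph{single} $K$-dominant eigenvalue, of multiplicity exactly $N$. This is Proposition~\ref{Au-D} and Lemma~\ref{singleton} in the paper. One analyses $A_u-D$ restricted to each $K$-dominant eigenspace $F_u(\sigma)$, showing its eigenvalues are $\tfrac12(\varpi+2-n_\sigma),\dots,\tfrac12(\varpi+n_\sigma)$ with $n_\sigma=\dim F_u(\sigma)$, via a ladder built from $K_u$, $H_u$, and $S^*$. Then the identity $\sum_\sigma(N-n_\sigma)(u_\sigma\vert 1)=0$, with each summand nonnegative, forces $|\Sigma|=1$ and $n_\sigma=N$. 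Only then does one get $K_u(u)=\zeta z^{N-1}u$ and hence $u=N/(1-\alpha z^N)$.

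Finally, your ``$z\mapsto z^N$ reduction'' is inverted relative to the paper: one does not reduce to degree one and then lift; one proves directly that the $\mathcal V(2N)$ profile is $N/(1-\alpha z^N)$, and the invariance under composition with inner functions (Proposition~\ref{invar}) is used only for the \emph{converse} direction, to verify that these profiles are traveling waves for general $N$ once the $N=1$ case is checked by hand.
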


\begin{rem}
Traveling waves with $c=0$ and $\omega\neq 0$ are called \emph{standing waves}. In particular, Theorem \ref{main} asserts that there are no standing waves for \eqref{quad} in $H^{1/2}_+$ apart from constants. However, if we remove the condition on the regularity of $v_0$ and only ask that $v_0\in BMO_+$, then we will prove that $v_0$ is a standing wave if and only if
\[ v_0=\lambda\cdot\Pi(\mathbbm{1}_\mathcal{B}),\]
where $\lambda\in\mathbb{C}$ and $\mathcal{B}$ is any Borel subset of $\mathbb{T}$.
\end{rem}

Here again, the situation appears to be very different from the one described in \cite[Theorem 1.4]{ann} for the cubic Szeg\H{o} equation. Indeed equation \eqref{cubic} admits many stationary waves in $H^{1/2}_+$ that are given by Blaschke products (see the end of Section \ref{proof_main} for a precise definition). Moreover, it also admits a wider variety of traveling waves, whereas those of Theorem \ref{main} all deduce from one another by an invariance argument (see Proposition \ref{invar}).

The proof of Theorem \ref{main} mainly consists in translating \eqref{trav} into an algebraic relation between operators thanks to the Lax pair. The key of the proof consists in describing the spectrum of a perturbation of the operator $D=\frac{1}{i}\partial_x$ by a bounded Toeplitz operator over $L^2_+$. It is worth noticing that this kind of operators precisely arises in the study of the Lax pair associated to the Benjamin-Ono equation (see \emph{e.g.} \cite{Wu-BO}). See also the pioneering work of Amick and Toland on the solitary waves for the Benjamin-Ono equation \cite{a-toland-2, a-toland}, also dealing with a non-local operator such as $\Pi$.

\vspace{1em}
In the present paper, we would also like to address the question of the nonlinear stability of the traveling wave solutions. Relatively to some norm $\|\cdot\|_X$, we will say that $v_0$ is \emph{$X$-orbitally stable} if for any $\varepsilon >0$, there exists $\eta>0$ such that if $\|u_0-v_0\|_X\leq \eta$, then, for all $t\in\mathbb{R}$,
\[\inf_{(\theta,\alpha)\in\mathbb{T}^2} \|u(t)-e^{-i\theta}v_0(ze^{-i\alpha})\|_X\leq \varepsilon,\]
where $t\mapsto u(t)$ refers to the solution of \eqref{quad} starting from $u_0$ at time $t=0$.

Classically, we have a global Gagliardo-Nirenberg inequality on $H^{1/2}_+$ :
\begin{prop}\label{gagliardo-prop}
For any $u\in H^{1/2}_+$,
\begin{equation}\label{gagliardo}
E(u)\leq \frac{1}{2}Q(u)^2(Q(u)+M(u)).
\end{equation}
Equality in \eqref{gagliardo} holds if and only if $u(z)=\frac{\lambda}{1-pz}$ for some $\lambda,p\in\mathbb{C}$ with $|p|<1$.
\end{prop}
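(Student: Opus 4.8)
The plan is to prove the Gagliardo–Nirenberg inequality \eqref{gagliardo} by expanding $E(u)=\tfrac12|J|^2$ where $J=\int_{\mathbb{T}}|u|^2u$, and then bounding $|J|$ in terms of $Q$ and $M$ via Cauchy–Schwarz applied in a way that naturally produces the weight $Q+M$. Writing $u=\sum_{k\ge 0}\hat u(k)e^{ikx}$, one has $Q(u)=\sum_k|\hat u(k)|^2$ and $M(u)=\sum_k k|\hat u(k)|^2$, so $Q(u)+M(u)=\sum_k (1+k)|\hat u(k)|^2=\|u\|_{H^{1/2}}^2$ up to the usual equivalence; the point is to get it with constant exactly $1$. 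I would first rewrite $J=(|u|^2\,|\,\bar u)=(u^2\,|\,u\cdot\text{something})$, or more efficiently observe that $J=(u\,|\,u)_{\!*}$ type pairing. Concretely, $J=\int_{\mathbb{T}}u\cdot u\cdot\bar u=(u^2\mid u)$ in the $L^2$ pairing, and $u^2\in L^2_+$, so $|J|\le \|u^2\|_{L^2}\|u\|_{L^2}=\|u^2\|_{L^2}\,Q(u)^{1/2}$. It then remains to prove $\|u^2\|_{L^2}^2\le Q(u)(Q(u)+M(u))$, i.e. $\|u^2\|_{L^2}^2\le Q(u)\,\|u\|_{H^{1/2}}^2$ with sharp constant; combining gives $|J|^2\le Q(u)^2(Q(u)+M(u))$, hence $E(u)\le\tfrac12 Q(u)^2(Q(u)+M(u))$ as desired.

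The heart of the matter is therefore the sharp bilinear estimate $\|u^2\|_{L^2(\mathbb{T})}^2\le Q(u)\,(Q(u)+M(u))$ for $u\in L^2_+$. Expanding in Fourier modes, $\widehat{u^2}(n)=\sum_{j+k=n}\hat u(j)\hat u(k)$ (sum over $j,k\ge 0$), so $\|u^2\|_{L^2}^2=\sum_{n\ge 0}\big|\sum_{j+k=n}\hat u(j)\hat u(k)\big|^2$. For fixed $n$ there are exactly $n+1$ pairs $(j,k)$ with $j,k\ge 0$ and $j+k=n$, so Cauchy–Schwarz gives $\big|\sum_{j+k=n}\hat u(j)\hat u(k)\big|^2\le (n+1)\sum_{j+k=n}|\hat u(j)|^2|\hat u(k)|^2$. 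Summing over $n$ and using $n+1=j+k+1$ when $j+k=n$, the right-hand side becomes $\sum_{j,k\ge 0}(j+k+1)|\hat u(j)|^2|\hat u(k)|^2$, which by symmetrizing the $j$ and $k$ roles equals $\sum_{j,k}(1+k)|\hat u(j)|^2|\hat u(k)|^2+ \tfrac12(\text{correction})$—more carefully, $\sum_{j,k}(j+k+1)|\hat u(j)|^2|\hat u(k)|^2 = 2\big(\sum_j |\hat u(j)|^2\big)\big(\sum_k k|\hat u(k)|^2\big)+\big(\sum_j|\hat u(j)|^2\big)^2$, wait: $\sum_{j,k}(j+k+1)a_ja_k=2(\sum_j j a_j)(\sum_k a_k)+(\sum a_k)^2$ with $a_k=|\hat u(k)|^2$, which is $2M\cdot Q+Q^2 = Q(Q+2M)$. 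This gives the slightly weaker $\|u^2\|_{L^2}^2\le Q(Q+2M)$; to recover the sharp $Q(Q+M)$ I would instead not bound $n+1$ crudely but use the refined Cauchy–Schwarz with weights, e.g. $\big|\sum_{j+k=n}\hat u(j)\hat u(k)\big|^2\le\big(\sum_{j+k=n}w_{j,k}\big)\big(\sum_{j+k=n}w_{j,k}^{-1}|\hat u(j)|^2|\hat u(k)|^2\big)$ for a suitable weight $w_{j,k}$ chosen so that the summed bound telescopes to exactly $Q(Q+M)$; the natural choice exploits $j+k=n$ and a weight like $w_{j,k}=\binom{n}{j}^{-1}$ is too lossy, so more likely $w_{j,k}\equiv 1$ is already optimal after a smarter symmetrization—alternatively one works directly with the holomorphic representation and the identity $\int_{\mathbb{T}}|u|^2|u|^2 = \int\int$ Hilbert-type kernel, deriving the estimate from a positivity/operator argument.

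I expect the main obstacle to be obtaining the \emph{sharp} constant and, simultaneously, the \emph{characterization of equality}: the crude Cauchy–Schwarz above loses a factor of $2$ on the $M$ term, so the argument must be arranged so that equality in each Cauchy–Schwarz step forces $u(z)=\lambda/(1-pz)$ and nothing else. The cleanest route is probably: equality in $|J|\le\|u^2\|_{L^2}Q^{1/2}$ forces $u^2$ and $u$ to be proportional as vectors in $L^2_+$—impossible unless... actually it forces $\overline{u^2}=c\bar u$ pointwise a.e., i.e. $u$ takes a single value, which is too restrictive, so one must \emph{not} split at $J=(u^2\mid u)$ but rather keep the trilinear structure together. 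Thus I would instead write $|J|=|(|u|^2\mid \bar u)|$... no: the robust approach is to interpret $J$ via the Hankel or Toeplitz operator from the Lax pair framework already invoked in the paper, or simply to prove the inequality by the substitution $u(z)=\lambda/(1-pz)+$ (orthogonal remainder) and a convexity/perturbation argument; but the expected intended proof is the Fourier computation above done carefully, where the equality case in the single Cauchy–Schwarz step $\big|\sum_{j+k=n}\hat u(j)\hat u(k)\big|^2\le(n+1)\sum_{j+k=n}|\hat u(j)\hat u(k)|^2$ forces, for every $n$, all products $\hat u(j)\hat u(k)$ with $j+k=n$ to be equal—which after a short induction yields $\hat u(k)=\lambda p^k$, i.e. a geometric series, i.e. $u(z)=\lambda/(1-pz)$; recovering the sharp $Q+M$ rather than $Q+2M$ will require replacing that single step by the correct weighted inequality, and pinning down that weight is the delicate point.
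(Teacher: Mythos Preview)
Your approach has a genuine gap, and it is not merely a matter of tuning weights. Splitting $|J|=|(u^2\mid u)|\le \|u^2\|_{L^2}\,Q^{1/2}$ and then trying to prove $\|u^2\|_{L^2}^2\le Q(Q+M)$ cannot succeed, because this intermediate inequality is \emph{false}. For $u(z)=1/(1-pz)$ with $r=|p|^2$ one computes $Q=1/(1-r)$, $Q+M=1/(1-r)^2$, hence $Q(Q+M)=1/(1-r)^3$, while $\|u^2\|_{L^2}^2=\sum_{k\ge 0}(k+1)^2r^k=(1+r)/(1-r)^3$, which is strictly larger as soon as $r>0$. Equivalently, equality in $|(u^2\mid u)|\le\|u^2\|\,\|u\|$ would force $u^2$ to be a scalar multiple of $u$ in $L^2_+$, i.e.\ $u$ constant; so your very first Cauchy--Schwarz step is already lossy on the whole family $\lambda/(1-pz)$ with $p\neq 0$, and no choice of weights downstream can repair that.

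The fix is to group the triple sum differently before applying Cauchy--Schwarz. Write
\[
J=\sum_{k,\ell\ge 0}u_ku_\ell\,\overline{u_{k+\ell}}
=\sum_{k\ge 0}u_k\Bigl(\sum_{\ell\ge 0}u_\ell\,\overline{u_{k+\ell}}\Bigr),
\]
apply Cauchy--Schwarz in $k$ to get $|J|^2\le Q\sum_k\bigl|\sum_\ell u_\ell\overline{u_{k+\ell}}\bigr|^2$, and then Cauchy--Schwarz in $\ell$ to get $\sum_k\bigl|\sum_\ell u_\ell\overline{u_{k+\ell}}\bigr|^2\le Q\sum_{k,\ell}|u_{k+\ell}|^2$. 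The point is that $\sum_{k,\ell\ge 0}|u_{k+\ell}|^2=\sum_{m\ge 0}(m+1)|u_m|^2=Q+M$ exactly, with no factor of $2$. For the equality case, the second Cauchy--Schwarz forces, for each $k$, the existence of $\gamma_k$ with $u_\ell=\gamma_k u_{k+\ell}$ for all $\ell$; taking $k=1$ shows $\{u_\ell\}$ is geometric, hence $u(z)=\lambda/(1-pz)$. This is precisely the paper's argument.
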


This enables us to prove the following stability result :

\begin{cor}[Stability of the ground states]\label{coro-stab}
For $\lambda,p\in\mathbb{C}$ with $|p|<1$, the traveling wave $z\mapsto \frac{\lambda}{1-pz}$ is $H^{1/2}_+$-orbitally stable.
\end{cor}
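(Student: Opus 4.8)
The plan is to run the classical variational scheme of Cazenave and Lions, greatly simplified by the fact that on the compact torus every bounded sequence in $H^{1/2}_+(\mathbb{T})$ has a subsequence converging weakly in $H^{1/2}_+$ and strongly in $L^p(\mathbb{T})$ for all $p<\infty$ — so there is neither vanishing nor dichotomy to worry about. First I would note that, up to replacing $\|\cdot\|_{H^{1/2}}^2$ by the equivalent quantity $Q(\cdot)+M(\cdot)$ (which for $u\in L^2_+$ equals $\sum_{k\geq 0}(1+k)|\hat u(k)|^2$, hence comes from an inner product), the functionals $Q$, $M$, $E$ are well defined and conserved along the flow of \eqref{quad} (see \cite{thirouin2}). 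Fix $v_0(z)=\lambda/(1-pz)$ with $|p|<1$; if $\lambda=0$ the statement is trivial, so assume $\lambda\neq 0$ and set $q:=Q(v_0)>0$, $m:=M(v_0)$. By Proposition \ref{gagliardo-prop}, $v_0$ saturates \eqref{gagliardo}, i.e. $E(v_0)=\tfrac12 q^2(q+m)$, and the functions achieving equality in \eqref{gagliardo} are exactly the homographies $\mu/(1-\rho z)$, $|\rho|<1$.

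I would then argue by contradiction: if $v_0$ were not $H^{1/2}_+$-orbitally stable, there would exist $\varepsilon_0>0$, a sequence $u_{0,n}\to v_0$ in $H^{1/2}_+$ and times $t_n$ such that $w_n:=u_n(t_n)$ satisfies $\inf_{(\theta,\alpha)\in\mathbb{T}^2}\|w_n-e^{-i\theta}v_0(ze^{-i\alpha})\|_{H^{1/2}}\geq\varepsilon_0$ for every $n$. Conservation of $Q$, $M$, $E$ together with continuity of these functionals on $H^{1/2}_+$ (for $E$, via $H^{1/2}_+\hookrightarrow L^3$) gives $Q(w_n)\to q$, $M(w_n)\to m$, $E(w_n)\to E(v_0)$. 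The sequence $(w_n)$ is thus bounded in $H^{1/2}_+$, so after extraction $w_n\rightharpoonup w$ weakly in $H^{1/2}_+$ and, by the compact embeddings, $w_n\to w$ strongly in $L^2$ and $L^3$. Hence $Q(w)=q$; moreover $J(w_n)=\int_\mathbb{T}|w_n|^2w_n\to\int_\mathbb{T}|w|^2w=J(w)$, so $E(w)=\tfrac12|J(w)|^2=E(v_0)=\tfrac12 q^2(q+m)$; and weak lower semicontinuity of $M$ gives $M(w)\leq m$. Plugging $w$ into \eqref{gagliardo} then yields
\[ \tfrac12 q^2(q+m)=E(w)\leq\tfrac12 Q(w)^2\bigl(Q(w)+M(w)\bigr)=\tfrac12 q^2\bigl(q+M(w)\bigr)\leq\tfrac12 q^2(q+m),\]
so all inequalities are equalities: $M(w)=m$ and $w$ saturates \eqref{gagliardo}, whence $w(z)=\mu/(1-\rho z)$ for some $\mu,\rho$ with $|\rho|<1$. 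Consequently $Q(w_n)+M(w_n)\to Q(w)+M(w)$, i.e. $\|w_n\|\to\|w\|$ for the inner-product norm above, which combined with $w_n\rightharpoonup w$ forces $w_n\to w$ strongly in $H^{1/2}_+$.

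The last step is to check that $w$ lies on the orbit of $v_0$, contradicting the choice of $(w_n)$. Summing geometric series, $Q\bigl(\mu/(1-\rho z)\bigr)=|\mu|^2/(1-|\rho|^2)$ and $M\bigl(\mu/(1-\rho z)\bigr)=|\mu|^2|\rho|^2/(1-|\rho|^2)^2$, and similarly for $v_0$ with $(\lambda,p)$. The identities $Q(w)=Q(v_0)$ and $M(w)=M(v_0)$ give $|\rho|^2/(1-|\rho|^2)=|p|^2/(1-|p|^2)$, hence $|\rho|=|p|$ and then $|\mu|=|\lambda|$; so one may pick $(\theta,\alpha)\in\mathbb{T}^2$ with $\mu=\lambda e^{-i\theta}$ and $\rho=pe^{-i\alpha}$, that is $w=e^{-i\theta}v_0(ze^{-i\alpha})$, a contradiction. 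The one genuinely delicate point is the upgrade from weak to strong convergence in $H^{1/2}_+$: because the momentum $M$ is only weakly lower semicontinuous, this is exactly where the rigidity of the equality case in Proposition \ref{gagliardo-prop} is essential, forcing $M(w)=\lim_n M(w_n)$ and hence convergence of the full $H^{1/2}$ norms.
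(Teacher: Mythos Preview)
Your proof is correct and follows essentially the same route as the paper: contradiction, extraction of a weak limit in $H^{1/2}_+$, strong $L^p$ convergence via Rellich to recover $Q$ and $E$ at the limit, weak lower semicontinuity of $M$, and then the equality case in Proposition~\ref{gagliardo-prop} to force $M(w)=m$ and upgrade to strong $H^{1/2}$ convergence. The only cosmetic difference is that the paper first rephrases the orbit as the level set $\{Q=Q(v_0),\,M=M(v_0),\,E=E(v_0)\}$ and concludes $f\in\mathcal{T}$ directly, whereas you carry out the explicit identification $|\rho|=|p|$, $|\mu|=|\lambda|$ from the formulae for $Q$ and $M$ of a homography; these are equivalent.
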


Apart from this family of ground states, we are also going to study the stability of the second branch of the traveling waves of Theorem \ref{main}. Using the Lax pair, we know (see Corollary \ref{coro-v}) that the set of functions of the form
\[ z\mapsto b+\frac{cz}{1-pz},\quad  b,c,p\in \mathbb{C},\: c\neq 0,\: c-bp\neq 0, \:|p|<1,\]
is left invariant by the flow of \eqref{quad}, and on this set (which is called $\mathcal{V}(3)$), \eqref{quad} reduces to a system of coupled ODEs on $b$, $c$ and $p$, which have been made explicit in \cite{thirouin2} and is recalled in Appendix \ref{formulaire}. Designing an appropriate perturbation of the ``translated ground states'' inside $\mathcal{V}(3)$, and studying the leading order of the new equations of motion, will yield the following result, which also makes use of the invariance argument mentioned above :

\begin{prop}\label{instab}
For $\lambda,p\in\mathbb{C}$ with $|p|<1$, for $N\in\mathbb{N}$, the traveling waves $z\mapsto -\lambda\frac{1+|p|^2}{1-|p|^2}+\frac{\lambda}{1-pz^N}$ are \emph{not} $H^{1/2}_+$-orbitally stable.
\end{prop}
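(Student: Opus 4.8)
The plan is to work entirely inside the invariant finite-dimensional manifold $\mathcal{V}(3)$ and to exhibit, arbitrarily close to a translated ground state, an initial datum whose orbit drifts away in $H^{1/2}_+$. By the invariance argument of Proposition \ref{invar} (which lets us pass from $N=1$ to general $N$ and to normalize $\lambda$), it suffices to treat the case $N=1$ and, say, $\lambda>0$; then the translated ground state reads $v_0(z)=-\lambda\frac{1+|p|^2}{1-|p|^2}+\frac{\lambda}{1-pz}$, which corresponds to a specific point $(b_*,c_*,p_*)\in\mathcal{V}(3)$ with $b_*=-\lambda\frac{1+|p|^2}{1-|p|^2}+\lambda$, $c_*=\lambda p$, $p_*=p$. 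The first step is to recall, from Appendix \ref{formulaire}, the explicit system of ODEs governing the evolution of $(b,c,p)$ under the flow of \eqref{quad} on $\mathcal{V}(3)$, together with the conserved quantities $Q$, $M$ and $E=\frac12|J|^2$ expressed in these coordinates.

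Next I would set up the perturbation. The function $v_0$ is a relative equilibrium of the $\mathcal{V}(3)$-dynamics: in rotating/modulated coordinates $(b,c,p)\mapsto (e^{i\omega t}b, e^{i\omega t}c, e^{ict}p)$ it is a fixed point. So I would linearize the reduced system about this fixed point in the co-moving frame and compute the spectrum of the linearization, restricted to the symplectic-orthogonal complement of the two-dimensional group orbit (phase rotation and phase translation). The claim of orbital \emph{instability} should translate into the existence of a direction in which the linearized dynamics is not neutrally stable — concretely, an eigenvalue with nonzero real part, or a nontrivial Jordan block producing polynomial growth. Because the problem is Hamiltonian and the conserved quantities $Q,M,E$ pin down several directions, the generic mechanism here will be a pair of real eigenvalues $\pm\mu$ with $\mu>0$ (a saddle), and the perturbation to choose is simply a small displacement along the unstable eigendirection. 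I expect that, after using $Q$ and $M$ conservation to eliminate two of the (real) degrees of freedom, one is left with an effective one-degree-of-freedom Hamiltonian near the equilibrium whose second-order expansion has an indefinite Hessian precisely on branch (ii) — in contrast to branch (i), where Proposition \ref{gagliardo-prop} forces a definite Hessian (a genuine minimum of $E$ at fixed $Q,M$), hence stability (Corollary \ref{coro-stab}).

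The third step is to convert this linear instability into genuine nonlinear $H^{1/2}_+$-orbital instability. For this I would track the leading-order behaviour of the exact (nonlinear) reduced ODEs: starting from $v_0+\delta$ with $\delta$ along the unstable direction and $\|\delta\|_{H^{1/2}}=\eta$ small, the corresponding solution of the $(b,c,p)$-system leaves a fixed $\varepsilon_0$-neighbourhood of the group orbit of $v_0$ in a time $\sim \mu^{-1}\log(\varepsilon_0/\eta)$; this is a standard Lyapunov-type instability argument for ODEs once the unstable eigenvalue is in hand. One must check two compatibility points: that the escape is visible in the $H^{1/2}$ norm and not killed when we quotient by the two-torus of symmetries (i.e. the unstable direction is transverse to the group orbit — which is why I restrict the linearization to the symplectic complement), and that the chosen perturbations genuinely lie in $\mathcal{V}(3)\subset H^{1/2}_+$ and can be taken with arbitrarily small $H^{1/2}$ norm while remaining inside $\mathcal{V}(3)$ (clear, since $\mathcal{V}(3)$ is an open subset of a finite-dimensional space and $v_0$ is an interior point). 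Finally, invoking Proposition \ref{invar} transports the conclusion to all $N$ and all $\lambda,p$.

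The main obstacle I anticipate is purely computational stamina: writing the $\mathcal{V}(3)$ equations of motion in a good set of coordinates, locating the translated-ground-state fixed point, and carrying out the linearization cleanly enough that the sign of the relevant discriminant (positive real eigenvalue) can be read off. Choosing coordinates adapted to the conserved quantities $Q$ and $M$ — effectively doing a symplectic reduction by the $\mathbb{T}^2$-action before linearizing — will be essential to keep the computation manageable and to make the saddle structure transparent; the hint in the statement that one should study "the leading order of the new equations of motion" for "an appropriate perturbation" suggests exactly this strategy of reducing to a low-dimensional normal form near the equilibrium.
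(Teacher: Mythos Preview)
Your plan is correct and is in the same spirit as the paper's proof, but the paper executes the reduction more concretely and thereby avoids the full linearization you anticipate as the ``computational stamina'' obstacle. After restricting to $\mathcal V(3)$ and fixing $Q,M$, the paper does not compute the Hessian of an effective Hamiltonian nor the spectrum of the linearized flow in a rotating frame; instead it singles out the scalar quantity $x:=|c|\sqrt{M}$ and derives a closed first-order relation
\[
\Bigl(\frac{dx}{dt}\Bigr)^2=4x^2(Q+x)^2(Q-x)(M-x)-\bigl[(Q+x)^2(Q-x)+x^2(M-x)-\mathcal E\bigr]^2,
\]
obtained by eliminating the angle $\psi=\arg(b\bar c p)$ via $\sin^2\psi=1-\cos^2\psi$. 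The chosen perturbation is also very explicit: one multiplies only the constant term $b$ by $e^{i\gamma}$, which leaves $Q,M$ and the initial value of $x$ unchanged and shifts $\mathcal E$ by a small $\delta\mathcal E>0$. Expanding the right-hand side to first order in $y:=x-x_r$ and in $\delta\mathcal E$ then shows directly that $(dy/dt)^2$ is bounded below by a positive constant along the orbit, forcing $|y(t)|$ (and hence the $H^{1/2}$-distance to the group orbit, since $|y|$ controls $|\,|c(t)|-\sqrt r\,|$) to drift away at least linearly.

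What each approach buys: your linearize-and-find-a-saddle strategy is the structural explanation behind the instability and, once the real eigenvalue is exhibited, would give exponential escape and a cleaner Lyapunov-type conclusion; but it requires carrying the $(b,c,p)$ system into a rotating frame, reducing by the $\mathbb T^2$-action, and actually checking the sign of the discriminant. The paper's route trades that structural picture for a single explicit scalar equation and a hand-picked perturbation, so the only computation is a first-order Taylor expansion of a polynomial in $x$. Both proofs finish the same way you describe, by lifting $N=1$ to general $N$ via Proposition~\ref{invar}.
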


The question of the stability of traveling waves of the form $\frac{\lambda}{1-pz^N}$, with $N\geq 2$, is left open. In the case of the cubic Szeg\H{o} equation, a negative answer is given in \cite{GGtori} for all traveling waves of degree greater than $2$, but the proof heavily relies on the use of action-angle coordinates which are still missing in our case.

\vspace*{1em}
\begin{rem}[A remark on the traveling waves on $\mathbb{R}$]
As equation \eqref{quad} can be posed on $\mathbb{R}$ as well, the same question of finding traveling waves holds. Adapting rigourously the argument of O. Pocovnicu \cite{PocoSol} leads to the same result as for the cubic Szeg\H{o} equation on the line : the only solitons are given by the profiles
\[ v_0(z)=\frac{\alpha}{z-p},\quad \forall z\in \{\zeta\in\mathbb{C}\mid \im \zeta >0 \},\]
where $\alpha,p\in \mathbb{C}\setminus\{0\}$ with $\im p <0$. Moreover, these solitons are orbitally stable. This remark strengthens an observation that was already made in \cite{thirouin2} : the cubic and quadratic Szeg\H{o} equation look very similar on the line, unlike what happens on the torus. That is why it would be also very interesting to study the stability of the above solitons under a perturbation of the quadratic Szeg\H{o} equation itself, as was done in \cite{PocoToeplitz} for a perturbation of the cubic Szeg\H{o} equation by a Toeplitz potential.
\end{rem}

\vspace*{1em}
This paper is organized as follows. In Section \ref{notations}, we give the definition of the operators arising in the study of equation \eqref{quad} and recall some results about them. In Section \ref{proof_main}, we give the proof of the classification theorem. Section \ref{stab} is devoted to the questions about the stability of the traveling waves that are exhibited in Theorem \ref{main}.

The author is most grateful to Pr. Patrick Gérard for suggesting the study of this problem, and for most valuable comments and remarks during this work.

\vspace*{1em}
\section{Notations and preliminaries}\label{notations}

\subsection{Hankel and Toeplitz operators}
In this paragraph, we recall some notations, the Lax pair result from \cite{thirouin2}, and some of its consequences.

For $u\in BMO_+(\mathbb{T})$, we denote the Hankel operator of symbol $u$ by $H_u: h\mapsto \Pi (u\bar{h})$. This defines a bounded $\mathbb{C}$-antilinear operator on $L^2_+$. Since $\forall h_1,h_2\in L^2_+$, we have $(H_u(h_1)\vert h_2)=(H_u(h_2)\vert h_1)$, we see that $H_u^2$ is $\mathbb{C}$-linear, positive and self-adjoint. Similarly, for $b\in L^\infty(\mathbb{T})$, the Toeplitz operator of symbol $b$ is given by $T_b:h\mapsto \Pi(bh)$. We also have $T_b\in\mathcal{L}(L^2_+)$, but $T_b$ is $\mathbb{C}$-linear. The adjoint of $T_b$ is $(T_b)^*=T_{\bar{b}}$. A fundamental example of a Toeplitz operator is the shift on the right, which we call $S:=T_{e^{ix}}$. Combining these definitions, we can define the shifted Hankel operator of symbol $u\in BMO_+$ : for $h\in L^2_+$, we set
\[ K_u(h):=S^*H_u(h)=H_uS(h)=H_{S^*u}(h).\]
Now we can state the algebraic identities associated to the Lax pair for \eqref{quad}.

\begin{thm}\label{lax_thm}
Let $u\in H^s_+$ for some $s>1/2$. Set $X(u):=2\Pi(|u|^2)+u^2$. Then we have
\begin{gather*}
K_{X(u)} = A_uK_u+K_uA_u \\
H_{X(u)} = A_uH_u+H_uA_u-(u\vert\cdot)u,
\end{gather*}
where $A_u:=T_{u}+T_{\bar{u}}$ is a bounded self-adjoint operator on $L^2_+$.
\end{thm}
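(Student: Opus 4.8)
The plan is to prove both operator identities by a direct computation, reducing everything to a handful of elementary composition rules for $\Pi$, $T_b$, $H_u$, $K_u$ and to the single observation that multiplying a function supported on strictly negative Fourier modes by one supported on nonpositive modes yields again a function supported on strictly negative modes, so that stray occurrences of $\Pi$ can be suppressed. Since $u\in H^s_+$ with $s>1/2$, the space $H^s$ is a Banach algebra embedded in $L^\infty$, hence $X(u)=2\Pi(|u|^2)+u^2\in H^s_+$ and all the operators below are bounded on $L^2_+$; if one prefers, one may also first assume $u$ to be a trigonometric polynomial and pass to the limit, all the operators involved depending continuously on $u$ in the $BMO$ topology. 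The symbol dependence of $H$ and $K$ being $\mathbb{C}$-linear, it suffices to analyse separately the two building blocks $\Pi(|u|^2)$ and $u^2$, i.e. to compute $H_{\Pi(|u|^2)}$, $H_{u^2}$ and their $S^*$-images.

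For the first block, the key is the sub-identity
\[ H_{\Pi(|u|^2)} = T_{\bar u} H_u = H_u T_u. \]
Indeed, for $h\in L^2_+$ one has $\Pi\bigl(\Pi(|u|^2)\,\bar h\bigr)=\Pi(|u|^2\bar h)$ because $(1-\Pi)(|u|^2)$ lives on strictly negative modes and $\bar h$ on nonpositive ones; then $\Pi(|u|^2\bar h)=\Pi(u\,\overline{uh})=H_u(T_u h)$, and symmetrically $\Pi(|u|^2\bar h)=\Pi(\bar u\,\Pi(u\bar h))=T_{\bar u}(H_u h)$ since $(1-\Pi)(u\bar h)$ is again supported on negative modes. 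For the second block, I would establish the product rule
\[ H_{u^2} = T_u H_u + H_u T_{\bar u} - (u\vert\cdot)u, \]
by splitting $u\bar h=\Pi(u\bar h)+(1-\Pi)(u\bar h)$ and identifying the lower part through $(1-\Pi)(u\bar h)=\overline{\Pi(\bar u h)}-(u\vert h)$, which follows from $|u|^2$ being real; this is precisely the step that manufactures the rank-one correction $-(u\vert\cdot)u$. Plugging these into $H_{X(u)}=2H_{\Pi(|u|^2)}+H_{u^2}$ and expanding $A_u H_u+H_u A_u=T_uH_u+T_{\bar u}H_u+H_uT_u+H_uT_{\bar u}$, the desired identity collapses to the single relation $T_{\bar u}H_u=H_u T_u$, which is already part of the first block.

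For the shifted identity I would simply apply $S^*=T_{e^{-ix}}$ to the formula just obtained for $H_{X(u)}$, using $S^*H_u=K_u$ together with the two commutation facts
\[ [S^*, T_{\bar u}] = 0, \qquad [S^*, T_u] = (\,\cdot\,\vert 1)\,S^*u, \]
both of which again come down to frequency-support bookkeeping: $S^*$ and $T_{\bar u}$ are Toeplitz operators with anti-holomorphic symbols and commute, while $S^*$ and $T_u$ fail to commute precisely by the rank-one map $h\mapsto(h\vert 1)S^*u$. The outcome is that $S^*$ applied to the correction term $-(u\vert\cdot)u$ produces $-(u\vert\cdot)S^*u$, while the commutator $[S^*,T_u]$ sitting behind $H_u$ in the term $S^*T_uH_u$ produces exactly $+(u\vert\cdot)S^*u$ (using $(H_u h\vert 1)=(u\vert h)$); the two cancel, leaving $K_{X(u)}=2T_{\bar u}K_u+T_uK_u+K_uT_{\bar u}$. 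Comparing with $A_uK_u+K_uA_u$, this reduces to $T_{\bar u}K_u=K_uT_u$, which is the $S^*$-conjugate of $T_{\bar u}H_u=H_uT_u$, so the shifted identity holds as well.

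The computations are routine; the only point that genuinely needs care — and what I would regard as the crux — is the bookkeeping of the rank-one corrections: getting the conjugations and the arguments of the scalar products right in the product rule for $H_{u^2}$, and then checking that this correction is annihilated exactly upon passing from $H$ to $K$. Everything else amounts to repeatedly erasing a projector $\Pi$ in front of a product one of whose factors is supported on nonpositive frequencies and the other on strictly negative ones.
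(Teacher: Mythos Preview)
Your proof is correct and follows essentially the same route as the paper: the same sub-identities $H_{\Pi(|u|^2)}=T_{\bar u}H_u=H_uT_u$ and $H_{u^2}=T_uH_u+H_uT_{\bar u}-(u\vert\cdot)u$, the same commutator $[S^*,A_u]=(\cdot\vert 1)S^*u$, and the same cancellation via $(H_uh\vert 1)=(u\vert h)$. The only cosmetic difference is that the paper applies $S^*$ directly to the already-simplified formula $H_{X(u)}=A_uH_u+H_uA_u-(u\vert\cdot)u$, so the identity $K_{X(u)}=A_uK_u+K_uA_u$ drops out immediately without the extra appeal to $T_{\bar u}K_u=K_uT_u$; your detour through the intermediate expression is harmless but slightly longer.
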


\begin{proof}[Proof.]
The proof simply follows from the arguments of \cite{thirouin2}, but we recall it for the convenience of the reader. Taking $h\in L^2_+$, we see that
\[ \Pi (\Pi(|u|^2)\bar{h})=\Pi(|u|^2\bar{h})=\Pi (u \overline{\Pi(uh)})=\Pi( \bar{u}\Pi(u\bar{h})),\]
and on the other hand, decomposing $\Pi (u^2\bar{h})=\Pi(u\Pi(u\bar{h}))+\Pi (u(I-\Pi)(u\bar{h}))$, we observe that
\[ (I-\Pi)(u\bar{h})=\overline{\Pi (\bar{u}h)}-(u\vert h),\]
so $\Pi(u^2\bar{h})=T_uH_u(h)+H_uT_{\bar{u}}(h)-(u\vert h)u$. Now, writing $H_{X(u)}(h)=2J\Pi(\Pi(|u|^2)\bar{h})+\bar{J}\Pi (u^2\bar{h})$ leads to the second identity.

Besides, for $h\in L^2_+$,
\[\begin{aligned} (S^*A_u-A_uS^*)h&=\Pi(\bar{z}\Pi((u+\bar{u})h)-(u+\bar{u})\Pi(\bar{z}h))\\
&=\Pi(\bar{z}uh)-\Pi(u\Pi(\bar{z}h))- \Pi(\bar{z}(I-\Pi)(\bar{u}h))\\
&=(h\vert 1)S^*u,
\end{aligned}\]
because $\bar{z}h=\Pi(\bar{z}h)+\bar{z}(h\vert 1)$, and $\bar{z}(I-\Pi)(\bar{u}h)\perp L^2_+$. Then
\begin{align*}
K_{X(u)}(h)=S^*H_{X(u)}(h)&=A_uK_u(h)+K_uA_u(h)-(u\vert h)S^*u+[S^*,A_u]H_u(h)\\
&=A_uK_u(h)+K_uA_u(h),
\end{align*}
since $(H_u(h)\vert 1)=(H_u(1)\vert h)=(u\vert h)$.
\end{proof}

A consequence of this theorem is the existence of finite-dimensional submanifolds of the energy space, of arbitrary complex dimension, which are stable by the flow of \eqref{quad}, as it is also true for the cubic Szeg\H{o} equation (see \cite{GGtori}).

\begin{déf}
Let $N\in\mathbb{N}$. We denote by $\mathcal{V}(2N)$ the set of functions $u\in H^{1/2}_+(\mathbb{T})$ such that $\rg H_u=\rg K_u=N$, and by $\mathcal{V}(2N+1)$ the set of those such that $\rg H_u=N+1$ and $\rg K_u=N$.
\end{déf}

It is easy to see that $\bigcup_{d\geq 0} \mathcal{V}(d)$ is simply the set of symbols $u$ such that $H_u$ (and $K_u$) is a finite-rank operator. From a theorem by Kronecker, the $\mathcal{V}(d)$'s can be described explicitely :

\begin{prop}[\cite{ann, explicit}]\label{kronecker}
Let $d\in\mathbb{N}$. A function $u(z)\in H^{1/2}_+$ belongs to $\mathcal{V}(d)$ if and only if
\[ u(z)=\frac{A(z)}{B(z)},\quad \forall z\in\mathbb{D},\]
where $A$ and $B$ are two polynomials such that $A\wedge B=1$, $B(0)=1$, $B$ has no root in $\overline{\mathbb{D}}$, and in addition
\begin{itemize}
\item $\deg A \leq N-1$ and $\deg B=N$ if $d=2N$ ;
\item $\deg A=N$ and $\deg B \leq N$ if $d=2N+1$.
\end{itemize}
\end{prop}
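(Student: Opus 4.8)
The plan is to derive both implications from Kronecker's rank formula for Hankel matrices, namely that for a reduced quotient $u=A/B$ with $B(0)=1$ one has $\rg H_u=\max(\deg A+1,\deg B)$, applied both to $u$ and to $S^*u$ (which, as we will see, is again a reduced quotient with the same denominator, so that $\rg K_u=\rg H_{S^*u}$ is computed by the same formula). The parity dichotomy in the statement will then fall out of a comparison of the two degrees.

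First I would treat the ``if'' direction. Assume $u=A/B$ with $A\wedge B=1$, $B(0)=1$ and $B$ nonvanishing on $\overline{\mathbb{D}}$; then $u$ is holomorphic on a neighbourhood of $\overline{\mathbb{D}}$, hence $u\in H^{1/2}_+$. Writing $B=\sum_{j\geq 0}b_jz^j$ with $b_0=1$, the identity $Bu=A$ forces $\hat u(n)=-\sum_{j\geq 1}b_j\hat u(n-j)$ for every $n\geq m:=\max(\deg A+1,\deg B)$, and since the columns of the Hankel matrix $(\hat u(i+j))_{i,j}$ are successive shifts of one another, they all lie in the span of the first $m$ of them, giving $\rg H_u\leq m$. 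The reverse inequality follows from the linear-algebra fact (reused below) that a Hankel matrix of finite rank $N$ satisfies a linear recurrence of order exactly $N$ among its entries: a rank strictly less than $m$ would exhibit $u$ as a quotient of polynomials of degrees $<m-1$ and $<m$ which, after cancellation, must coincide with $A/B$, contradicting the definition of $m$. One then checks that $S^*u=(A-A(0)B)/(zB)=\tilde A/B$ is already reduced (a common zero of $\tilde A$ and $B$ would be a nonzero common zero of $A$ and $B$), applies the formula once more, and distinguishes cases: if $\deg A<\deg B=:N$ then $\deg\tilde A=N-1$ and $\rg H_u=\rg K_u=N$, so $u\in\mathcal{V}(2N)$; if $\deg A\geq\deg B$, putting $N:=\deg A$, then $\deg\tilde A\leq N-1$, hence $\rg H_u=N+1$ and $\rg K_u=N$, so $u\in\mathcal{V}(2N+1)$ — in both cases with exactly the degree bounds claimed.

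For the converse, take $u\in H^{1/2}_+$ with $\rg H_u=N<\infty$. Writing $e_j=(\hat u(i+j))_i$ for the columns of the Hankel matrix, the shift relation $e_j\mapsto e_{j+1}$ shows that if $r$ is the largest integer with $e_0,\dots,e_{r-1}$ linearly independent, then $e_r\in\vect(e_0,\dots,e_{r-1})$ and this relation propagates to all columns, so that $r=N$ and $(\hat u(n))$ satisfies a linear recurrence of order $N$; its reciprocal polynomial $B$ has $B(0)=1$ and $\deg B\leq N$, and summing the resulting series yields $u=A/B$ with $\deg A\leq N-1$. Passing to lowest terms and using that $u\in L^2_+\subset L^2(\mathbb{T})$ — a rational function holomorphic in $\mathbb{D}$ and square-integrable on $\mathbb{T}$ cannot have a pole on $\overline{\mathbb{D}}$ once numerator and denominator are coprime — shows the reduced denominator has no zero in $\overline{\mathbb{D}}$. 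The exact degrees are then pinned down by feeding this reduced form back into the computation of the first part and comparing with the value of $\rg K_u$ prescribed by membership in $\mathcal{V}(2N)$ or $\mathcal{V}(2N+1)$: one gets $\deg B=N$ in the even case and $\deg A=N$ in the odd case.

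The main obstacle is the degree bookkeeping at the reduction step: one must check that passing to lowest terms cannot lower $m=\max(\deg A+1,\deg B)$, and that the possible cancellation of leading coefficients in $A-A(0)B$ when $\deg A=\deg B$ leaves $\rg K_u$ unchanged — this is precisely where the dichotomy between $\mathcal{V}(2N)$ and $\mathcal{V}(2N+1)$ is decided. Everything else is routine. As an alternative to the recurrence argument, one may instead note that $\ker H_u$ is a nonzero closed $S$-invariant subspace of $L^2_+$ (because $H_u\circ S=S^*\circ H_u$), hence equal to $\Theta L^2_+$ for some inner $\Theta$ by Beurling's theorem, with $\Theta$ necessarily a finite Blaschke product since $\ran H_u=(\ker H_u)^\perp$ is finite-dimensional; then $u=H_u(1)$ lies in the model space $L^2_+\ominus\Theta L^2_+$, which is exactly the space of rational functions described in the statement.
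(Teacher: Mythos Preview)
The paper does not supply a proof of this proposition: it is stated with a reference to the Gérard--Grellier papers and used as a known result, so there is no in-paper argument to compare against.

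Your proposal is the classical proof of Kronecker's theorem and is essentially correct. Two minor points of bookkeeping. In the even case your claim ``$\deg\tilde A=N-1$'' should read ``$\deg\tilde A\le N-1$'' (take $A(0)=0$, e.g.\ $u(z)=z/B(z)$); this is harmless, since $\deg B=N$ still dominates in $\max(\deg\tilde A+1,\deg B)$ and $\rg K_u=N$ follows regardless. In the converse you use the same letter $N$ both for $\rg H_u$ and for the index in $\mathcal{V}(2N)$ or $\mathcal{V}(2N+1)$, which clashes in the odd case where $\rg H_u=N+1$; the argument survives once the notation is disentangled. Note also that your lower bound $\rg H_u\ge m$ quietly invokes the recurrence-to-rational step from the ``only if'' half --- you flag this with ``reused below'', so it is not circular, but in a clean write-up one would establish the recurrence characterisation once, up front, and read off both inequalities from it. The Beurling alternative you sketch at the end, identifying $\ker H_u=\Theta L^2_+$ for a finite Blaschke product $\Theta$ so that $u\in L^2_+\ominus\Theta L^2_+$, is in fact closer in spirit to the treatment in the cited references.
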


In particular, each $\mathcal{V}(d)$ is composed of rational, hence smooth functions. Now we can state the announced consequence of Theorem \ref{lax_thm} :

\begin{cor}\label{coro-v}
For each $d\in \mathbb{N}$, $\mathcal{V}(d)$ is preserved by the flow of \eqref{quad}.
\end{cor}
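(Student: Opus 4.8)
The plan is to run the classical Lax-pair argument, tailored to the precise shape of the two identities in Theorem~\ref{lax_thm}. First I would reduce to smooth data: if $u_0\in\mathcal{V}(d)$ then, by Proposition~\ref{kronecker}, $u_0$ is rational, hence $u_0\in C^\infty_+\subset\bigcap_{s>0}H^s_+$, and since the flow of \eqref{quad} propagates regularity (see \cite{thirouin2}), the solution $u(t)$ issued from $u_0$ stays in $C^\infty_+$ for all time. In particular the identities of Theorem~\ref{lax_thm} hold along the whole orbit, $t\mapsto J(u(t))$ is continuous with constant modulus, and all the operators appearing below depend continuously on~$t$.

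Next I would turn the algebraic identities into evolution equations. Since $v\mapsto K_v$ is $\mathbb{C}$-linear, since $\partial_t u=\tfrac1i\bigl(2J\Pi(|u|^2)+\bar J u^2\bigr)$, and since the proof of Theorem~\ref{lax_thm} actually produces the refined relations $K_{\Pi(|u|^2)}=T_{\bar u}K_u=K_uT_u$ and $K_{u^2}=T_uK_u+K_uT_{\bar u}$, a direct computation yields a \emph{genuine} Lax equation for the shifted Hankel operator,
\[ \frac{d}{dt}K_{u(t)}=\bigl[C_{u(t)},K_{u(t)}\bigr],\qquad C_u:=\tfrac1i\bigl(\bar J\,T_u+J\,T_{\bar u}\bigr),\]
where $C_u$ is bounded and skew-adjoint (because $T_u^{*}=T_{\bar u}$). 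Passing to the $\mathbb{C}$-linear, positive, self-adjoint, finite-rank operator $K_u^2$ gives $\tfrac{d}{dt}K_{u(t)}^2=\bigl[C_{u(t)},K_{u(t)}^2\bigr]$, and conjugating by the unitary propagator $U(t)$ solving $\dot U=C_{u(t)}U$, $U(0)=I$, yields $K_{u(t)}^2=U(t)K_{u_0}^2U(t)^{*}$. Hence $\rg K_{u(t)}=\rg K_{u_0}$ for all $t$, and even $\ran K_{u(t)}=U(t)\ran K_{u_0}$ (using that for finite-rank $K_v$ one has $\ker K_v^2=\ker K_v$, so $\ran K_v=\ran K_v^{2}=(\ker K_v)^{\perp}$).

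The point where the quadratic case genuinely differs from the cubic one---and the main obstacle---is that the analogous computation for $H_u$ only gives $\tfrac{d}{dt}H_{u(t)}=\bigl[C_{u(t)},H_{u(t)}\bigr]+R_{u(t)}$ with a nonzero rank-one remainder $R_u=i\bar J\,(u\vert\cdot)u$ coming from the extra term $-(u\vert\cdot)u$ in Theorem~\ref{lax_thm}; this rank-one perturbation destroys the unitary conjugation for $H_u$, so the difference between $\mathcal{V}(2N)$ and $\mathcal{V}(2N+1)$ has to be handled separately. I would use the elementary splitting $L^2_+=\mathbb{C}\cdot 1\oplus SL^2_+$: together with $H_v(1)=v$ and $H_vS=K_v$ it gives $\ran H_v=\mathbb{C}\cdot v+\ran K_v$, so $\rg H_v$ equals $\rg K_v$ or $\rg K_v+1$ according as $v\in\ran K_v$ or not. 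Thus $\mathcal{V}(2N)=\{v:\rg K_v=N,\ v\in\ran K_v\}$ and $\mathcal{V}(2N+1)=\{v:\rg K_v=N,\ v\notin\ran K_v\}$, and it remains only to show that the truth value of ``$u(t)\in\ran K_{u(t)}$'' is independent of $t$.

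For this last step I would set $w(t):=U(t)^{*}u(t)$. Using $C_u u=\tfrac1i\bigl(\bar J u^2+J\Pi(|u|^2)\bigr)$ one computes $\dot w=U(t)^{*}\bigl(\partial_t u-C_{u(t)}u\bigr)=-iJ(t)\,U(t)^{*}\Pi(|u(t)|^2)$; and since $\Pi(|u|^2)=H_u(u)=K_u(S^{*}u)+\overline{(u\vert 1)}\,u$ (from $u=SS^{*}u+(u\vert 1)\,1$), the right-hand side lies in $\ran K_{u_0}+\mathbb{C}\,w(t)$. Letting $P$ be the orthogonal projection onto $(\ran K_{u_0})^{\perp}$, the equation collapses to a scalar linear ODE $\tfrac{d}{dt}(Pw)=\mu(t)\,Pw$ for a continuous scalar function $\mu$; by uniqueness $Pw(t)\equiv 0$ if and only if $Pw(0)=Pu_0=0$, that is, if and only if $u_0\in\ran K_{u_0}$. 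Since $\ran K_{u(t)}=U(t)\ran K_{u_0}$, this says precisely that $u(t)\in\ran K_{u(t)}$ holds for one $t$ if and only if it holds for all $t$. Combined with the preservation of $\rg K_{u(t)}$, this gives $u(t)\in\mathcal{V}(d)$ for all $t$.
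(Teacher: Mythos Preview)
Your argument is correct, and it takes a genuinely different route from the paper's.

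The paper first invokes the Lax pair for $K_u$ (from \cite{thirouin2}) to conclude that $\mathcal{V}(2N)\cup\mathcal{V}(2N+1)$ is preserved, and then attacks $\rg H_u$ directly: it absorbs the rank-one remainder $-\bar J(u\vert\cdot)u$ into a modified operator $Y_u:=-iA_{\bar J u}+\tfrac{i}{2}\bar J(\cdot\vert 1)u$ and obtains the ``weak'' Lax identity $\tfrac{d}{dt}H_u=Y_uH_u+H_uY_u^*$. Solving $\dot V=-Y_u^*V$ produces an \emph{invertible} (but not unitary) $V(t)$ with $V^*H_uV$ constant, whence $\rg H_{u(t)}$ is constant. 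Your proof uses exactly the same skew-adjoint piece $C_u=-iA_{\bar J u}$ for $K_u$, but then, instead of building a companion identity for $H_u$, you exploit the structural decomposition $\ran H_v=\mathbb{C}\,v+\ran K_v$ to reduce the distinction between $\mathcal{V}(2N)$ and $\mathcal{V}(2N+1)$ to the single boolean ``$u(t)\in\ran K_{u(t)}$''. Tracking $Pw(t)=P\,U(t)^*u(t)$, where $P$ projects onto $(\ran K_{u_0})^\perp$, and using $\Pi(|u|^2)=K_u(S^*u)+\overline{(u\vert 1)}\,u$, you collapse the evolution to $\tfrac{d}{dt}(Pw)=\mu(t)Pw$ with $\mu(t)=-iJ(t)\overline{(u(t)\vert 1)}$; this is indeed a scalar-coefficient linear ODE on $(\ran K_{u_0})^\perp$, so $Pw$ vanishes identically or never.

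What each approach buys: the paper's proof treats $H_u$ and $K_u$ on an equal footing and shows that $H_u^2$, too, remains conjugate (as a quadratic form) to its initial value, a remark used elsewhere in the paper. Your approach is more economical in that it never leaves the genuine Lax pair for $K_u$, avoids introducing the non-skew-adjoint $Y_u$, and makes the mechanism separating $\mathcal{V}(2N)$ from $\mathcal{V}(2N+1)$ completely transparent: it is precisely the (in)dependence of $u$ on $\ran K_u$, propagated by a one-dimensional flow.
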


\begin{proof}[Proof.]
From \cite{thirouin2} we already know that $\mathcal{V}(2N)\cup \mathcal{V}(2N+1)$ is preserved by the flow of \eqref{quad}. Thus it suffices to show that $\mathcal{V}(2N)$ is stable. Let $u_0\in \mathcal{V}(2N)$ and $t\mapsto u(t)$ be the corresponding solution. Since $u_0$ belongs to $C^\infty_+(\mathbb{T})$, so does $u(t)$ for all time $t\in\mathbb{R}$, so we can apply Theorem \ref{lax_thm} and compute, for a given $h\in L^2_+$,
\[ i\frac{d}{dt} H_u(h)=H_{2J\Pi(|u|^2)+\bar{J}u^2}(h)=A_{\bar{J}u}H_u(h)+H_uA_{\bar{J}u}(h)-\bar{J}(u\vert h)u,\]
or in other words, using the $\mathbb{C}$-antilinearity of $H_u$, and the fact that $(u\vert h)=(H_u(h)\vert 1)$,
\[ \frac{d}{dt}H_u=\left(-iA_{\bar{J}u}+\frac{i}{2}\bar{J}(\cdot \vert 1)u \right)H_u +H_u\left(iA_{\bar{J}u} -\frac{i}{2}J(\cdot \vert u)1\right).\]
We give a name to the operator on the left, say $Y_u:=-iA_{\bar{J}u}+\frac{i}{2}\bar{J}(\cdot \vert 1)u$, and note that since $A_{\bar{J}u}$ is self-adjoint, we have in fact
\begin{equation}\label{faux_lax}
\frac{d}{dt}H_u=Y_uH_u+H_uY_u^*.
\end{equation}
This identity is close to a Lax pair, but $Y_u^*\neq -Y_u$. It is still enough to deduce the stability of $\mathcal{V}(2N)$. Indeed, let $V(t)$ be the (global) solution to the following linear Cauchy problem on $\mathcal{L}(L^2_+)$ :
\[\left\lbrace \begin{aligned}
V'(t)&=-Y_{u(t)}^*V(t), \\
V(0)&=I.
\end{aligned} \right. \]
Since $V\in GL(L^2_+)$ at time $t=0$, this remains true for all time. Besides, compute
\[ \frac{d}{dt}(V^*H_uV)=(-V^*Y_u)H_uV+V^*\left( \frac{d}{dt}H_u\right) V+V^*H_u(-Y_u^*V)=0\]
by \eqref{faux_lax}, hence $V^*(t)H_{u(t)}V(t)=H_{u_0}$ for all $t\in\mathbb{R}$. In particular, $\ran H_{u(t)}=(V^*(t))^{-1}\ran H_{u_0}$, so both spaces have the same dimension. This proves that $\forall t\in\mathbb{R}$, $u(t)\in\mathcal{V}(2N)$.
\end{proof}

\begin{rem}
As a weaker version of a Lax pair, an identity such as \eqref{faux_lax} also holds for $H_u^2$. It can be interpreted saying that $H_u^2$ remains equivalent to $H_{u_0}^2$ as a \emph{quadratic form}, whereas $K_u^2$ remains equivalent to $K_{u_0}^2$ as an \emph{operator} on $L^2_+$ (see \cite{thirouin2}).
\end{rem}

\subsection{The spectral theory of \texorpdfstring{$K_u$}{Ku}}\label{spec_theo}
For the sake of completeness, we recall below some properties of $K_u$ which will be useful in the course of the proof of Theorem \ref{main}. For a more general picture, we refer to \cite[Section 3]{livrePG}.

Let $u\in H^{1/2}_+$. Then $H_u^2$, $K_u^2$ are positive compact self-adjoint operators \cite{ann} which satisfy the relation $H_u^2=K_u^2+(\cdot \vert u)u$. For $\rho,\sigma\geq 0$, we denote by
\[ E_u(\rho):=\ker (H_u^2-\rho^2 I),\quad F_u(\sigma):=\ker (K_u^2-\sigma^2 I).\]

\begin{prop}[\cite{livrePG}]
Let $s> 0$ such that $\{0\} \varsubsetneq E_u(s)\cup F_u(s)$. Then one of the following holds true :
\begin{enumerate}
\item $\dim E_u(s)= 1+\dim F_u(s)$, $u\not\perp E_u(s)$ and $F_u(s)=E_u(s)\cap u^\perp$ ($s$ is $H$-dominant) ;
\item $\dim F_u(s)=1+\dim E_u(s)$, $u\not\perp F_u(s)$ and $E_u(s)=F_u(s)\cap u^\perp$ ($s$ is $K$-dominant).
\end{enumerate}
\end{prop}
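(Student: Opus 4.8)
The plan is to exploit the identity $H_u^2 = K_u^2 + (\cdot\,|\,u)u$, which says that $H_u^2$ is a rank-one perturbation of $K_u^2$, and to analyze what happens at a fixed eigenvalue $s^2>0$. First I would fix $s>0$ with $E_u(s)\cup F_u(s)\neq\{0\}$ and consider the finite-dimensional space $G:=E_u(s)+F_u(s)$, which is invariant under both $H_u^2$ and $K_u^2$ (since $(\cdot\,|\,u)u$ maps everything into $\vect(u)$, and one checks $u\in G$ unless $u\perp G$, in which case $H_u^2=K_u^2$ on $G$ and the two eigenspaces coincide — this degenerate subcase has to be ruled out separately, see below). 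On $G$, write $P$ for the orthogonal projection onto $\vect(u)$; then $H_u^2 - K_u^2 = \|u\|^2 P$ restricted to $G$, a rank-one nonnegative operator. The key linear-algebra fact is: if two self-adjoint operators on a finite-dimensional space differ by a nonnegative rank-one operator, their eigenvalues interlace, and at any given eigenvalue the multiplicities can differ by at most one.

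The main steps, in order. (1) Show $u\not\perp E_u(s)$ or $u\not\perp F_u(s)$: if $u$ were orthogonal to both, then on $G$ we would have $H_u^2=K_u^2$, forcing $E_u(s)=F_u(s)$; but then a short argument using $K_u = S^*H_u$ and the fact that $\ker H_u$ versus $\ker K_u$ differ controllably (or directly: $H_u^2 h = s^2 h$ with $h\perp u$ gives $K_u^2 h = s^2 h$, and conversely $K_u^2 h = s^2 h$ with $h\perp u$ gives $H_u^2 h = s^2 h$, so the only ambiguity is the vector(s) not perpendicular to $u$) leads to a contradiction with $s>0$ — I would need to check that one cannot have $u\perp E_u(s)=F_u(s)$ simultaneously, which follows because $u=H_u(1)\in\ran H_u$ and a spectral decomposition of $1$ shows $u$ must have a nonzero component on some eigenspace; pinning down that it is exactly this $s$ requires care. (2) Restricting to $G$ and diagonalizing $K_u^2$ there, compute $H_u^2 = K_u^2 + \|u\|^2 P$ and use the rank-one perturbation formula: the eigenvalues of $H_u^2|_G$ at $s^2$ are exactly the eigenvalues of $K_u^2|_G$ at $s^2$ that lie in $\ker P$ — i.e. $E_u(s)\cap u^\perp = F_u(s)\cap u^\perp$ — plus possibly one extra eigenvalue that can cross $s^2$. (3) Decide the direction: since the perturbation $+\|u\|^2P$ is nonnegative, eigenvalues can only move up, so $\dim E_u(s)\leq \dim F_u(s)+1$ and $\dim F_u(s)\leq\dim E_u(s)$ — wait, that would already give case (1) always, which is false, so in fact one must argue more symmetrically using the characteristic polynomial identity $\det(K_u^2-xI)\cdot(\text{something}) = \det(H_u^2-xI)\cdot(\text{something})$ coming from the rank-one formula $\det(K_u^2+\|u\|^2P - xI) = \det(K_u^2-xI)(1+\|u\|^2\,\langle (K_u^2-xI)^{-1}u,u\rangle)$, and tracking the order of vanishing at $x=s^2$. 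The sign of the rational function $x\mapsto 1+\|u\|^2\langle(K_u^2-xI)^{-1}u,u\rangle$ and its poles/zeros at $s^2$ is what distinguishes the two cases.

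**The main obstacle.** The genuinely delicate point is step (3): correctly bookkeeping the order of vanishing of $\det(H_u^2-xI)$ versus $\det(K_u^2-xI)$ at $x=s^2$ through the rank-one determinant identity, and extracting from it the trichotomy "$s^2$ is an eigenvalue of $K_u^2$ but not $H_u^2$ / of both with equal multiplicity / of both with multiplicities differing by one". The subtlety is that $u$ may itself lie partly in $F_u(s)$, so $\langle(K_u^2-xI)^{-1}u,u\rangle$ genuinely has a pole at $s^2$, and one must separate the component of $u$ inside $F_u(s)$ from the rest. Once that is done, identifying which of the two displayed cases occurs, and checking the three claimed conclusions ($\dim E_u(s)=1+\dim F_u(s)$ or the reverse; $u$ not perpendicular to the dominant space; the subordinate space equals the dominant one intersected with $u^\perp$), is then a matter of reading off the rank-one perturbation structure. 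I would also verify at the end that the two cases are mutually exclusive, which is immediate from the dimension count since $\dim E_u(s)$ and $\dim F_u(s)$ cannot differ by $+1$ and $-1$ at once.
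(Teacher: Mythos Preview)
The paper does not prove this proposition; it is quoted from \cite{livrePG} and used as a black box. So there is no ``paper's own proof'' to compare against, and I will comment on your plan directly.

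Your overall strategy---exploit $H_u^2=K_u^2+(\cdot\mid u)u$ as a rank-one perturbation---is the right one, and the easy half goes through cleanly: the identity $E_u(s)\cap u^\perp=F_u(s)\cap u^\perp$ is immediate, and since $u^\perp$ has codimension~$1$, each of $E_u(s)$, $F_u(s)$ is either this common space $W$ or $W$ plus one extra line. Ruling out that \emph{both} strictly contain $W$ does not need the determinant formula you propose in step~(3): pick $e\in E_u(s)$, $f\in F_u(s)$ with $(e\mid u)=(f\mid u)=1$; then $K_u^2e=s^2e-u$, and computing $(K_u^2e\mid f)$ two ways (once via this, once via $K_u^2f=s^2f$ and self-adjointness) gives $s^2(e\mid f)-1=s^2(e\mid f)$, a contradiction.

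The genuine gap is the one you yourself flag: excluding the ``degenerate'' case $u\perp E_u(s)$ and $u\perp F_u(s)$ (equivalently $E_u(s)=F_u(s)=W$). Your proposed fix---``$u=H_u(1)$ has a nonzero component on \emph{some} eigenspace''---does not work, as you suspect: for a generic rank-one perturbation $A=B+(\cdot\mid v)v$, any $B$-eigenspace orthogonal to $v$ is also an $A$-eigenspace, so pure perturbation theory cannot rule this out. You must use the Hankel/shift structure. The argument is: if $u\perp E_u(s)$ then for $h\in E_u(s)$ one has $H_u h\in E_u(s)\subset u^\perp$, hence $0=(H_u h\mid u)=(H_u u\mid h)=(H_u^2 1\mid h)=s^2(1\mid h)$, so $1\perp E_u(s)$. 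Then for such $h$, $SS^*h=h$ and $K_u^2=S^*H_u^2S$ give $K_u^2(S^*h)=s^2S^*h$, so $S^*$ injects $E_u(s)$ into $F_u(s)$. If $E_u(s)=F_u(s)$ (finite-dimensional, since $s>0$ and $H_u^2$ is compact), $S^*$ is an automorphism of it and hence has an eigenvector $v$ there; but $(v\mid 1)=0$ together with $S^*v=\lambda v$ forces $v=\lambda zv$, so $v=0$. This is the missing ingredient your plan needs.
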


Denote by $\Sigma^K_u$ the set of $K$-dominant eigenvalues of $K_u^2$ (plus $0$). For $\sigma \in \Sigma^K_u$, let $u_\sigma$ be the projection of $u$ onto $F_u(\sigma)$. By the above proposition, we have $u_\sigma\neq 0$ for $\sigma >0$, and
\begin{equation}\label{reconstr}
u=\sum_{\sigma\in\Sigma^K_u}u_\sigma.
\end{equation}

We will also need a simple lemma :
\begin{lemme}\label{orth1}
Let $\sigma\in \Sigma^K_u\setminus \{0\}$. Then $E_u(\sigma)\subseteq F_u(\sigma)\cap 1^\perp$.
\end{lemme}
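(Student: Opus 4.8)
The plan is to exploit the two fundamental relations available: the structural identity $H_u^2 = K_u^2 + (\cdot\mid u)u$ between the Hankel and shifted Hankel squares, and the elementary intertwining $K_u = S^*H_u = H_u S$. Let $h\in E_u(\sigma)$, so $H_u^2 h = \sigma^2 h$. Applying the structural identity gives $K_u^2 h = \sigma^2 h - (h\mid u)u$. Since $\sigma\in\Sigma^K_u$ is $K$-dominant, the proposition above tells us $u\not\perp F_u(\sigma)$ and more importantly $E_u(\sigma) = F_u(\sigma)\cap u^\perp$; but I want to reprove the inclusion $E_u(\sigma)\subseteq F_u(\sigma)$ directly and then upgrade it to $E_u(\sigma)\subseteq 1^\perp$.

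First I would show $E_u(\sigma)\subseteq F_u(\sigma)$. Take $h\in E_u(\sigma)$. From $K_u^2 h = \sigma^2 h - (h\mid u)u$, it suffices to prove $(h\mid u) = 0$. For this, recall $u = H_u(1)$, so $(h\mid u) = (h\mid H_u 1) = (H_u h\mid 1)$ by the symmetry of $H_u$. Now $H_u h\in E_u(\sigma)$ as well (since $H_u$ commutes with $H_u^2$ up to antilinearity — more precisely $H_u$ maps $E_u(\sigma)$ to itself), and one can compute $(H_u h\mid 1)$: actually the cleanest route is to observe that if $\sigma$ were $H$-dominant we would have a contradiction with the hypothesis, and the proposition forces $E_u(\sigma) = F_u(\sigma)\cap u^\perp$ outright, giving both $E_u(\sigma)\subseteq F_u(\sigma)$ and $E_u(\sigma)\subseteq u^\perp$ for free. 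So the only real content of the lemma is the refinement from $u^\perp$ to $1^\perp$.

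To get $E_u(\sigma)\subseteq 1^\perp$: let $h\in E_u(\sigma)$, so $h\in F_u(\sigma)$ and $H_u^2 h = \sigma^2 h$. Using $K_u = S^*H_u$ and $K_u^2 = S^*H_u S^* H_u$ — wait, more carefully $K_u^2 = (S^*H_u)(S^*H_u)$; since $K_u$ is antilinear, $K_u^2 = S^*H_u S^* H_u$ is linear. From $H_u h = S K_u h$ (using $K_u = S^* H_u$ together with $SS^* = I - (\cdot\mid 1)1$, so $SK_u h = SS^*H_u h = H_u h - (H_u h\mid 1)1$), I obtain $H_u h = SK_u h + (H_u h\mid 1)1$. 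Apply $H_u$ again, using antilinearity and $H_u S = K_u$: $H_u^2 h = H_u(SK_u h) + \overline{(H_u h\mid 1)}H_u 1 = K_u^2 h + \overline{(H_u h\mid 1)}u$. Hence $\sigma^2 h = \sigma^2 h + \overline{(H_u h\mid 1)}u$, so $\overline{(H_u h\mid 1)}u = 0$; since $u\neq 0$ (as $\sigma>0$ is in the spectrum and $u_\sigma\neq 0$), we get $(H_u h\mid 1) = 0$, i.e. $(h\mid H_u 1) = (h\mid u) = 0$ — this recovers $h\perp u$. For the stronger $h\perp 1$: note that $h\in\ran K_u\subseteq \ran S^* $, and every element of $\ran S^*$... hmm, $\ran S^* = L^2_+$ actually, so that is not enough. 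Instead I would use that $h\in F_u(\sigma)$ with $\sigma>0$ implies $h = \sigma^{-2}K_u^2 h = \sigma^{-2}S^*(H_u S^* H_u h)\in\ran S^*$, which is all of $L^2_+$, so again insufficient; the correct argument is that $h = \sigma^{-2}K_u^2 h$ and $K_u^2 = S^* H_u S^* H_u$ shows $h\in S^*(\ran H_u)$... I should instead directly expand: write $h = \sigma^{-2}H_u^2 h$ and compute $(h\mid 1) = \sigma^{-2}(H_u^2 h\mid 1) = \sigma^{-2}(H_u 1\mid H_u h) = \sigma^{-2}(u\mid H_u h) = \sigma^{-2}\overline{(H_u h\mid u)} = \sigma^{-2}\overline{(H_u u\mid h)}$, and then use $h\perp u$ established above together with the fact that $H_u u\in E_u(\sigma)^{\perp}$-type reasoning won't directly close it — so the honest main obstacle is organizing which of $u^\perp$ versus $1^\perp$ comes out of which relation. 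The expected main obstacle is precisely this bookkeeping: the identity $H_u^2 = K_u^2 + (\cdot\mid u)u$ naturally produces orthogonality to $u$, and one must feed that back through $K_u = H_u S$ and the relation $u = H_u 1$ to convert it into orthogonality to $1$; the key leverage is that for $\sigma>0$ the eigenspace $F_u(\sigma)$ sits inside $\ran K_u \subseteq \ran S^* H_u$, and elements of $\ker(K_u^2 - \sigma^2)$ with $\sigma > 0$ satisfy $h = \sigma^{-2}S^* H_u(S^*H_u h)$, placing $h$ in $S^*(\ran H_u)$; combined with $h\perp u = H_u 1$ one checks $(h\mid 1) = (S^*g\mid 1) = 0$ automatically since $\ran S^* \perp$... no — the clean finish is $(h\mid 1) = \sigma^{-2}(K_u^2 h\mid 1) = \sigma^{-2}(H_u S^* H_u h\mid S 1)$ and $S1 = z$, so this is $\sigma^{-2}(H_u(S^*H_u h)\mid z)$, which after using symmetry of $H_u$ and $H_u z = S^* H_u 1 = S^* u = K_u 1$ reduces to an expression killed by $h\perp u$. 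I would present the argument in that order: recall the two operator identities, deduce $h\perp u$, then deduce $h\perp 1$ via the shift, flagging the shift-manipulation as the only delicate point.
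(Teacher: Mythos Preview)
Your plan correctly extracts $E_u(\sigma)\subseteq F_u(\sigma)\cap u^\perp$ from the preceding proposition, but the passage from $u^\perp$ to $1^\perp$ has a genuine gap, and you miss the paper's much shorter route. The paper's proof is a single line: since $H_u$ preserves $E_u(\sigma)$, one has $H_uh\in E_u(\sigma)\subseteq u^\perp$, hence
\[
0=(H_uh\vert u)=(H_uh\vert H_u1)=(1\vert H_u^2h)=\sigma^2(1\vert h),
\]
and $\sigma>0$ forces $h\perp 1$. The decisive move is to apply the orthogonality $E_u(\sigma)\perp u$ to $H_uh$ rather than to $h$ itself.

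Your detour through $(h\vert 1)=\sigma^{-2}(K_u^2h\vert 1)$ and the shift identities can in principle be completed, but not the way you claim. Unwinding gives
\[
(S^*u\vert S^*H_uh)=(u\vert H_uh)-\overline{(H_uh\vert 1)}\,(u\vert 1).
\]
The second term indeed vanishes from $h\perp u$, since $(H_uh\vert 1)=(H_u1\vert h)=(u\vert h)=0$. But the first term $(u\vert H_uh)$ is \emph{not} killed by $h\perp u$: one computes $(u\vert H_uh)=\overline{(H_u^2 1\vert h)}=\sigma^2(h\vert 1)$, which is exactly the quantity you are trying to annihilate, so the argument is circular as stated. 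What actually kills this term is $H_uh\perp u$ --- and the moment you invoke that, you already have the paper's one-line computation displayed above, making the whole $K_u^2$ machinery superfluous.
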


\begin{proof}[Proof.]
Suppose $h\in E_u(\sigma)$. Then $H_u(h)\in E_u(\sigma)$, so $H_u(h)\perp u$ by the preceding proposition. Thus $0=(H_u(h)\vert u)=(1\vert H_u^2(h))=\sigma^2(h\vert 1)$.
\end{proof}

\vspace*{1em}
\section{Proof of the main theorem}\label{proof_main}

As usual, we derive from \eqref{trav} an equation on $v_0$ : indeed, if $v$ satisfies both \eqref{quad} and \eqref{trav}, then
\[ i\partial_t v= \omega v+cDv,\]
and $J(v)=e^{-i\omega t}J^0$, where we set $J^0:=J(v_0)=(H_{v_0}^3(1)\vert 1)$. As usual, $D:=-i\partial_x=z\partial_z$. Note that $J^0\neq 0$, since we assume that $u$ is not a steady solution. Thus, we find an equation for the initial data $v_0$ :
\begin{equation}\label{eq-v}
\omega v_0+cDv_0=2J^0\Pi(|v_0|^2)+\overline{J^0}v_0^2.
\end{equation}
Our goal is thus to solve this differential equation. We begin with the necessary conditions and assume that there exists $v_0\in H^{1/2}_+$ such that \eqref{eq-v} holds.

\subsection{The case of standing waves}
Let us first examine the case when $c=0$. To simplify our analysis, we write $v_0=\frac{\omega J^0}{|J^0|^2}u$ so that equation \eqref{eq-v} translates into
\begin{equation}\label{standing}
u=2\Pi(|u|^2)+u^2.
\end{equation}
In order to use Theorem \ref{lax_thm}, we first need to show that the operator $A_u$ can be properly defined. Observe that since $|u|^2$ takes real values, we have
\[ |u|^2=\Pi(|u|^2)+\overline{\Pi(|u|^2)}-Q=\re (u(1-u))-Q.\]
Writing $|u|^2+\re (u^2)=|u|^2+(\re u)^2-(\im u)^2=2(\re u)^2$, we infer that $\re u=r_\pm$ almost everywhere on $\mathbb{T}$, where $r_\pm$ stands for the roots of the polynomial $2X^2-X+Q$. In particular, $\re u\in L^\infty(\mathbb{T})$, which proves that $A_u=T_u+T_{\bar{u}}=2T_{\re u}$ defines a self-adjoint bounded operator on $L^2_+$.

The Lax pair of Theorem \ref{lax_thm} then applies, and yields that
\[ K_u=A_uK_u+K_uA_u.\]
This shows that $A_u$ and $K_u^2$ commute. Now suppose that $K_u^2$ is not zero. As $K_u^2$ is a compact operator (it is even trace class), there exists $V\subseteq L^2_+$ an eigenspace of $K_u^2$ of finite dimension $d\geq 1$. $A_u$ stabilizes $V$, and $A_u\vert_{V}$ is self-adjoint. By the theory of Hermitian operators in finite dimension, $A_u\vert_{V}$ then admits a non-zero eigenvector, \textit{i.e.} there exists $\varphi\in L^2_+\setminus\{0\}$ and $\lambda\in\mathbb{R}$ such that $A_u\varphi=\lambda\varphi$. Hence
\[ 2\Pi (\varphi\re u)=\lambda \varphi,\]
which means that $(2\re u-\lambda)\varphi \perp L^2_+$. Multiplying by $\bar{\varphi}$ yields that $(2\re u-\lambda)|\varphi|^2$ only has negative Fourier modes, but as this last function takes real values, we must have $(2\re u-\lambda)|\varphi|^2\equiv 0$, or equivalently $2\re u \equiv \lambda$ on $\{ \varphi\neq 0\}$. However, a classical result on the Hardy space $L^2_+$ (see \cite[Theorem 17.18]{Rudin}) ensures that since $\varphi$ is not identically zero, $\{ \varphi =0\}$ has zero measure in $\mathbb{T}$. So $2\re u \equiv \lambda$ almost everywhere. Let us write $u=\frac{\lambda}{2}+i\psi$ with $\psi$ a real function. As $\psi=\frac{1}{i}(u-\frac{\lambda}{2})\in L^2_+$, which means that $\psi$ is constant, and so is $u$. This contradicts the assumption that $K_u^2\neq 0$. Hence $K_u^2=0$, and $K_u=0$ (because $\ker K_u=\ker K_u^2$), and finally $S^*u=0$, which means that $u$ (therefore $v_0$) is constant.

Conversely, the set of constant functions corresponds to the manifold $\mathcal{V}(1)=\{ u\in H^{1/2}_+\mid K_u= 0\}$ which is preserved by the flow. On this manifold, \eqref{quad} becomes the following ODE : $if'=3|f|^4f$. Its solutions satisfy $|f|^2= \mathrm{cst}$, and so turn out to be standing waves.

\vspace{1em}
\begin{rem}
However, it appears from the previous analysis that equation \eqref{standing} admits many non-trivial solutions on $BMO_+(\mathbb{T})$, and we can also classify them. Indeed, pick some real number $0<r_-<\frac{1}{6}$, and set $r_+:=\frac{1}{2}-r_-$. Define
\[ \theta :=\frac{r_-}{r_+-r_-}=\frac{2r_-}{1-4r_-}\in (0,1),\]
and let $\mathcal{B}_+\subseteq\mathbb{T}$ be \emph{any} Borel set of measure $\theta$. Let $\mathcal{B}_-:=\mathbb{T}\setminus\mathcal{B}_+$. Lastly, set $f:\mathbb{T}\to\mathbb{R}$, with $f=r_+\mathbbm{1}_{\mathcal{B}_+}+r_-\mathbbm{1}_{\mathcal{B}_-}$, and introduce
\[ u(z):=\frac{1}{2\pi}\int_0^{2\pi}\frac{e^{ix}+z}{e^{ix}-z}f(e^{ix})dx,\quad \forall z\in\mathbb{D}.\]

With these definitions, we notice that $f\in L^2(\mathbb{T})$, and thanks to the Poisson kernel, we see that $\re u$ equals $r_+$ (resp. $r_-$) on $\mathcal{B}_+$ (resp. $\mathcal{B}_-$). It also appears that $u$ is holomorphic on $\mathbb{D}$, and expanding $(1-ze^{-ix})^{-1}$ as a power series, it can be checked that $u=2\Pi(f)-(f\vert 1)$, so $u\in \Pi(L^\infty)=BMO_+(\mathbb{T})$.

As above, we have $2(\re u)^2-\re u +2r_-r_+=0$, which implies that
\[ |u|^2=\re (u(1-u))-2r_-r_+.\]
Applying the Szeg\H{o} projector, we get $\Pi(|u|^2)=\frac{1}{2}u(1-u)+\frac{1}{2}(1\vert u)(1-(1\vert u))-2r_-r_+$. Furthermore, $(u\vert 1)=(f\vert 1)=r_+\theta+r_-(1-\theta)r_-=2r_-$, so we compute
\[\frac{1}{2}(1\vert u)(1-(1\vert u))-2r_-r_+=r_-(1-2r_--2r_+)=0.\]
This proves that $u$ solves \eqref{standing}. Note that we have
\[ u=\frac{1}{1+2\theta}\cdot\Pi(\mathbbm{1}_{\mathcal{B}_+}).\]
\end{rem}

\subsection{The general case}

\paragraph{Traveling waves are rational functions.} From now on, we assume that $c\neq 0$. Observe that from \eqref{eq-v}, it implies that $v_0\in C^\infty_+(\mathbb{T})$. If we make the ansatz $v_0=\frac{cJ^0}{|J^0|^2}u$, then equation \eqref{eq-v} reduces to the following equation on the profile $u$ :
\begin{equation}\label{eq-u}
\varpi u+Du=2\Pi(|u|^2)+u^2,
\end{equation}
where we have set $\varpi :=\omega/c$.

Writing $K_{Du}=DK_u+K_uD+K_u$ and $H_{Du}=DH_u+H_uD$, the Lax pair identities from Theorem \ref{lax_thm} read
\begin{align}
(\varpi +1)K_u&=(A_u-D)K_u+K_u(A_u-D),\label{commutK} \\
\varpi H_u&=(A_u-D)H_u+H_u(A_u-D)-(u\vert \cdot)u.\label{commutH}
\end{align}
Now, since $D$ is a selfadjoint operator with compact resolvent and $A_u$ is a selfadjoint bounded operator, $D-A_u$ is also a selfadjoint operator with compact resolvent, so we can find an eigenbasis $\{\varepsilon_j\}_{j\in\mathbb{N}}$ of $L^2_+$ and real eigenvalues $\lambda_j$ going to $+\infty$, such that $(D-A_u)\varepsilon_j=\lambda_j\varepsilon_j$, $\forall j\in\mathbb{N}$. Plugging this into \eqref{commutK}, we find
\[ (-\varpi -1-\lambda_j)K_u(\varepsilon_j)=(D-A_u)K_u(\varepsilon_j),\]
so $-\varpi-1-\lambda_j$ is also an eigenvalue of $D-A_u$, unless $K_u(\varepsilon_j)=0$. Since $((D-A_u)h\vert h)\geq -M\|h\|^2$ for some $M>0$, we deduce that $K_u(\varepsilon_j)=0$ for $j$ large enough, hence $K_u$ has finite rank. This proves that $u$ is a rational function by Proposition \ref{kronecker}.

We are now going to deduce further informations on $u$ thanks to equation \eqref{eq-u} itself. Indeed expand $u(z)$ as a linear combination of
\[ z^k, \ 0\leq k\leq m_0, \quad \text{and} \quad \frac{1}{(1-p_\ell z)^k}, \ 1\leq k\leq m_\ell, \]
where $\mathcal{P}=\{ p_\ell \ | \ 1\leq \ell \leq N\}$ is some finite set of points of $\mathbb{D}\setminus \{0\}$, and the $m_\ell$'s are nonnegative integers. Now $\Pi(|u|^2)=H_u(u)\in \im H_u$, so it is a combination of the same fractions (see \cite[appendix 4]{ann} for a complete description of $\im H_v$ when $v$ is a rational function). As $D=z\partial_z$, $Du$ is a linear combination of $z^k$, $0\leq k\leq m_0$, and of $(1-p_\ell z)^{-k}$, $1\leq k\leq m_\ell+1$, $p_\ell\in\mathcal{P}$. However, the $u^2$ term in \eqref{eq-u} generates terms like $z^{2m_0}$ and $(1-p_\ell z)^{-2m_\ell}$, and they cannot be compensated by other terms unless $m_0=0$, and $m_\ell=1$ for all $\ell$. In other words, $u$ must have simple poles, and we write
\[ u=\beta +\sum_{\ell =1}^N \frac{\alpha_\ell}{1-p_\ell z},\]
for some $\beta$, $\alpha_1,\ldots ,\alpha_N\in\mathbb{C}$.

Let us compute
\begin{align*}
\Pi(|u|^2) &= \sum_{\ell=1}^N \left( \bar{\beta}+\sum_{\kappa=1}^N \frac{\alpha_\ell\overline{\alpha_\kappa}}{1-p_\ell\overline{p_\kappa}} \right) \frac{1}{1-p_\ell z}+|\beta|^2+\sum_{\ell=1}^N \overline{\alpha_\ell}\beta ,\\
u^2 &= \sum_{\ell=1}^N\frac{\alpha_\ell^2}{(1-p_\ell z)^2}+\sum_{\ell=1}^N \left(2\beta\alpha_\ell+ \sum_{\kappa\neq \ell}2p_\ell\frac{\alpha_\ell\alpha_\kappa}{p_\ell-p_\kappa}\right) \frac{1}{1-p_\ell z}+\beta^2 ,\\
Du&= \sum_{\ell=1}^N\frac{\alpha_\ell}{(1-p_\ell z)^2}-\sum_{\ell=1}^N \frac{\alpha_\ell}{1-p_\ell z}.
\end{align*}
Considering the multiples of $\frac{1}{(1-p_\ell z)^2}$ which only appear when computing $Du$ and $u^2$, we get that $\alpha_\ell=1$, for all $1\leq \ell\leq N$ (assuming without loss of generality that $\alpha_\ell\neq 0$).

\paragraph{Reduction to the case $u\in \mathcal{V}(2N)$.} From the above formulae on $u$, $\Pi(|u|^2)$ and $u^2$, we also get an equation on $\beta$ which reads
\[ \varpi \beta = 2|\beta|^2+2N\beta +\beta^2.\]
This equation only has two solutions : either $\beta=0$ or, dividing by $\beta$, $2\bar{\beta}+\beta=\varpi -2N$. This shows that $\beta=\varpi-2N-2\re (\beta)\in \mathbb{R}$, so we must have
\begin{equation}\label{beta}
\beta =\frac{1}{3}(\varpi -2N).
\end{equation}

Now, introduce $\tilde{u}=u-\beta$. Then
\begin{align*}
2\Pi(|\tilde{u}|^2)+\tilde{u}^2&=2\Pi(|u|^2)+2|\beta|^2-2\beta\Pi(\bar{u})-2\bar{\beta}u+u^2-2u\beta+\beta^2 \\
&=2\Pi(|u|^2)+u^2-4\beta u+3\beta^2-2\beta (1\vert u) \\
&=(\varpi-4\beta) u+Du +\beta^2-2\beta N\\
&=(\varpi-4\beta) \tilde{u}+D\tilde{u}.
\end{align*}

Consequently, up to a modification of $\varpi$, it suffices to treat the case $\beta=0$, so from now on, we consider that $u$ belongs to $\mathcal{V}(2N)$ and is of the form
\begin{equation}\label{DES}
u=\sum_{\ell=1}^N \frac{1}{1-p_\ell z}.
\end{equation}
In particular, we have
\begin{equation}\label{moyenne}
(u\vert 1)=N.
\end{equation}

To conclude, it suffices to describe the possible choices of $\mathcal{P}=\{p_\ell\}$. Note that the $p_\ell$'s solve the following system of equations :
\begin{equation}\label{syst_pl}
\frac{\varpi -1}{2}=\sum_{\kappa=1}^N \frac{1}{1-p_\ell\overline{p_\kappa}}+\sum^N_{\substack{\kappa=1 \\ \kappa\neq \ell}}\frac{p_\ell}{p_\ell-p_\kappa},\quad \forall \ell =1,\ldots, N.
\end{equation}

\paragraph{Spectral analysis of $A_u-D$.} To solve the above system, we are going to perform a spectral analysis of the operator $A_u-D$, taking advantage from its relation with the self-adjoint finite-rank operator $K_u^2$, in the spirit of \cite[Section 9]{ann}. Denote by $\Sigma$ the (finite) set of the $K$-dominant eigenvalues of $K_u^2$. In this whole section, we fix $\sigma\in \Sigma$. To simplify the notations, we set $F:=F_u(\sigma)$ and $E:=E_u(\sigma)$ (see paragraph \ref{spec_theo}). Since $0\notin \Sigma$ now that $u\in\mathcal{V}(2N)$, both $F$ and $E$ are finite-dimensional subspaces of $\ran H_u=\ran K_u$. We also define $u_\sigma$ to be the (non-zero) orthogonal projection of $u$ onto $F$. Our goal is to prove the following statement :

\begin{prop}\label{Au-D}
We have
\[ (A_u-D)u_\sigma=\frac{\varpi +n_\sigma}{2}u_\sigma,\]
where $n_\sigma:=\dim F$.
\end{prop}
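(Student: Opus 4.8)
The idea is to combine the ``twisted Lax'' identity \eqref{commutK} for $K_u$ with the reconstruction formula \eqref{reconstr} and the dominant/non-dominant eigenspace structure recalled in Section \ref{spec_theo}. Fix $\sigma\in\Sigma$ and write, as in the statement, $F=F_u(\sigma)$, $E=E_u(\sigma)$, $n_\sigma=\dim F$, and let $u_\sigma$ be the orthogonal projection of $u$ onto $F$; recall from the proposition in Section \ref{spec_theo} that $\sigma$ is $K$-dominant, so $E=F\cap u^\perp$ and $\dim F=1+\dim E$, while $u_\sigma\neq 0$. The first step is to understand how $A_u-D$ acts on $F$. From \eqref{commutK}, for $h\in F$ one has $K_u^2((A_u-D)h) = \sigma^2 (A_u-D)h + (\varpi+1)\,[\text{lower-order correction}]$; more precisely, iterating \eqref{commutK} once gives $K_u^2(A_u-D)-(A_u-D)K_u^2$ as a commutator that, restricted to $\ker(K_u^2-\sigma^2)$, shows $(A_u-D)F \subseteq F \oplus (\text{eigenspaces of } K_u^2 \text{ for other eigenvalues})$, and in fact the off-$F$ components are controlled. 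The cleanest route is: apply $K_u$ to \eqref{commutK} applied to $h\in F$ and use $K_u h$ again lies in the relevant eigenspace, deducing that the compression $P_F(A_u-D)P_F$ on the finite-dimensional space $F$ is self-adjoint (since $A_u-D$ is self-adjoint and $F$ is an eigenspace of the self-adjoint $K_u^2$ that commutes with $A_u-D$ up to the explicit lower-order terms).

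Second, I would pin down the scalar. The key is that $u_\sigma$ is a distinguished vector in $F$: it is the unique (up to scaling) element of $F$ not orthogonal to $u$, equivalently $F = \mathbb{C}u_\sigma \oplus E$ with $E=F\cap u^\perp$. So it suffices to show that $u_\sigma$ is an eigenvector of $P_F(A_u-D)P_F$ and compute the eigenvalue. To get the eigenvector property, test \eqref{commutH} (the $H_u$ identity) against vectors of $F$: for $h\in E = F\cap u^\perp$ one has $(u\mid h)=0$, so \eqref{commutH} restricted there mimics \eqref{commutK}, and comparing the two on $E$ versus on all of $F$ isolates the rank-one defect $(u\mid\cdot)u$ precisely along the $u_\sigma$ direction. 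This forces $(A_u-D)u_\sigma$ to be a multiple of $u_\sigma$ modulo $E$, and self-adjointness of the compression together with $u_\sigma\perp E$ upgrades this to $(A_u-D)u_\sigma = \mu_\sigma u_\sigma$ for a real scalar $\mu_\sigma$. To evaluate $\mu_\sigma$, pair the identity $(A_u-D)u_\sigma=\mu_\sigma u_\sigma$ with $u$ (or with $1$): using $(u\mid 1)=N$, \eqref{moyenne}, Lemma \ref{orth1}, the relation $H_u^2=K_u^2+(\cdot\mid u)u$, and the trace of \eqref{commutK} or \eqref{commutH} over $F$, one extracts a linear relation between $\mu_\sigma$, $\varpi$ and $n_\sigma$. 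Taking the trace of \eqref{commutK} restricted to $F$ should give $(\varpi+1)\,\sigma^2 n_\sigma = 2\sigma^2\,\tr\big(P_F(A_u-D)P_F\big)$ up to boundary contributions, i.e. the average of the eigenvalues of the compression is $(\varpi+1)/2$; then the separate contribution of the $E$-block versus the $u_\sigma$-block, computed from \eqref{commutH} (which on $E$ behaves like \eqref{commutK} but \emph{without} the ``$+1$''), yields that the $E$-eigenvalues average to $(\varpi-1)/2 + (\text{shift})$ and that $\mu_\sigma$ absorbs exactly $n_\sigma$ in place of the count, landing on $\mu_\sigma=(\varpi+n_\sigma)/2$.

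Third, a consistency/uniqueness check: since $u=\sum_{\sigma\in\Sigma}u_\sigma$ by \eqref{reconstr} and $Du=(A_u-D)u - (2\Pi(|u|^2)+u^2-\varpi u)\cdot(\text{sign})$ — more simply, applying the putative formula to each $u_\sigma$ and summing should reproduce \eqref{eq-u}, which gives an independent verification and, run backwards, can even replace part of the scalar computation.

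The main obstacle I anticipate is the second step: showing that $u_\sigma$ is genuinely an \emph{eigenvector} (not merely that the compression is diagonalizable with $u_\sigma$ in a cyclic position) and nailing the exact constant $\varpi+n_\sigma$ rather than something off by the dimension of $E$. The rank-one perturbation $(u\mid\cdot)u$ in \eqref{commutH} is what breaks the naive symmetry between $E$ and the $u_\sigma$-line, and keeping careful track of where that defect lands — together with the ``$+1$'' discrepancy between \eqref{commutK} and \eqref{commutH} coming from $K_{Du}=DK_u+K_uD+K_u$ — is where the bookkeeping is delicate. Everything else (self-adjointness of compressions, finite-dimensionality, $(u\mid 1)=N$) is routine given the tools already assembled in the excerpt.
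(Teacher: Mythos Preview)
Your first step --- showing that $A_u-D$ preserves $F$ and $E$ separately, hence $\mathbb{C}u_\sigma$ --- is the same as the paper's, though you understate it: $A_u-D$ commutes \emph{exactly} with $K_u^2$ (apply $K_u$ on both sides of \eqref{commutK} and subtract), so there are no ``lower-order terms'' or ``off-$F$ components'' to control. The invariance of $E$ then follows as you say, via \eqref{commutH} on $E$ where the rank-one term drops, though it takes a short two-step manipulation (apply the identity to $H_uh$ and back-substitute) to conclude $H_u^2(A_u-D)h=\sigma^2(A_u-D)h$.

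Your computation of the scalar is genuinely different from the paper's and, once cleaned up, does work. The correct formulation is not literally ``trace of \eqref{commutK}'' (that equation involves the antilinear $K_u$, and the trace of the $K_u^2$-version is a tautology), but rather the spectral symmetry it encodes: $K_u$ bijects the $\mu$-eigenspace of $(A_u-D)|_F$ onto the $(\varpi+1-\mu)$-eigenspace, so the eigenvalues of $(A_u-D)|_F$ average to $(\varpi+1)/2$; likewise $H_u$ on $E$ gives average $\varpi/2$ (not $(\varpi-1)/2$ as you wrote). Subtracting traces, $\mu_\sigma = \tfrac{n_\sigma(\varpi+1)}{2}-\tfrac{(n_\sigma-1)\varpi}{2}=\tfrac{\varpi+n_\sigma}{2}$. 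This is slicker than the paper's route for the proposition as stated. The paper instead brings in the shift $S^*$, shows it raises eigenvalues by $1$ on $E$, and deduces that the spectrum of $(A_u-D)|_F$ is the block $\{\tfrac{\varpi+2-n_\sigma}{2},\dots,\tfrac{\varpi+n_\sigma}{2}\}$ with $u_\sigma$ at the top. That extra structural information is not free decoration: it is exactly what yields Corollary~\ref{angle-Ku}, $K_u(u_\sigma)=\zeta z^{n_\sigma-1}u_\sigma$, which is essential in the subsequent ``Determination of $u$'' step. Your trace argument proves the proposition but would leave that corollary unproved.
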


\begin{proof}[Proof.] The proof of this proposition decomposes into several steps.

\vspace*{1ex}
\noindent\underline{First step.}\hspace*{1ex} $u_\sigma$ is an eigenvector of $A_u-D$.

From \eqref{commutK}, we see that $(A_u-D)K_u^2=K_u^2(A_u-D)$. This shows that $(A_u-D)(F)\subseteq F$. Now, as $E=F\cap u^\perp$, then by \eqref{commutH}, $\varpi H_u=(A_u-D)H_u+H_u(A_u-D)$ on $E$.

If $h\in E$, then $H_u(h)\in E$, so we apply this identity to $H_u(h)$. We get
\[\begin{aligned} \sigma^2\varpi h&=\sigma^2(A_u-D)(h)+H_u(A_u-D)H_u(h)\\
&=\sigma^2(A_u-D)(h)+H_u( -H_u(A_u-D)+\varpi H_u)(h),\end{aligned}\]
so $H_u^2(A_u-D)(h)=\sigma^2(A_u-D)(h)$, which proves that $A_u-D$ also leaves $E$ invariant. As $A_u-D$ is self-adjoint, it thus preserves $F\cap E^\perp = \mathbb{C}u_\sigma$, which means that there exists $\lambda\in\mathbb{R}$ such that
\[(A_u-D)u_\sigma=\lambda u_\sigma.\]

\vspace*{1ex}
\noindent\underline{Second step.}\hspace*{1ex} $2\lambda-\varpi$ is a positive integer.

Assume by contradiction that it is not true. Then by \eqref{commutK}, $K_uu_\sigma$ satisfies $(A_u-D)K_uu_\sigma=(\varpi +1 -\lambda)K_uu_\sigma$. As $\varpi+1-\lambda\neq\lambda$ by assumption, we find that $K_uu_\sigma\perp u_\sigma$, and since $K_uu_\sigma\in F$, $K_uu_\sigma\perp u$, hence $K_uu_\sigma\in E$. Applying identity \eqref{commutH}, we get
\[(A_u-D)H_uK_uu_\sigma=(\lambda-1)H_uK_uu_\sigma.\]
Note that $H_uK_uu_\sigma\neq 0$, for $K_u$ (resp. $H_u$) is one-to-one on $F$ (resp. $E$).

As $H_uK_uu_\sigma\in E$ and also in $F$, we can restart this argument, and prove by induction that
\[ (A_u-D)(H_uK_u)^ju_\sigma=(\lambda -j)(H_uK_u)^ju_\sigma,\quad \forall j\in\mathbb{N}.\]
This of course cannot happen, otherwise $A_u-D$ would have infinitely many distinct eigenvalues on $F$ which is a finite dimensional subspace of $L^2_+$. 

From now on, we write $\lambda = \frac{1}{2}(\varpi +n)$. It remains to show that $n=\dim F$.

\vspace*{1ex}
\noindent\underline{Third step.}\hspace*{1ex} The action of $S^*$ on $E$.

Our purpose now is to prove that if $e\in E\setminus\{0\}$ satisfies $(A_u-D)e=\mu e$, then $S^*e\in F\setminus\{0\}$ and satisfies $(A_u-D)S^*e=(\mu +1)S^*e$.

We thus have to compute the commutator $[A_u-D,S^*]$. Since for $h\in L^2_+$,
\[[D,S](h)=z\partial_z(zh)-z^2\partial_zh=zh=S(h),\]
we get $[S^*,D]=([D,S])^*=S^*$. Besides, we know from the proof of Theorem \ref{lax_thm} that $S^*A_u-A_uS^*=(\cdot\vert 1)S^*u$. But $E\subseteq 1^\perp$ by Lemma \ref{orth1}. This proves that $A_u$ and $S^*$ commute on $E$.

Finally, taking $e\in E$ as above, we get
\[ (A_u-D)S^*e=S^*(A_u-D)e+[A_u-D,S^*]e=\mu S^*e+[S^*,D]e=(\mu +1)S^*e.\]
To see that $S^*e\in F\setminus\{0\}$, it suffices to notice that $\sigma^2S^*e=S^*H_u^2e=K_uH_ue$, and to conclude thanks to the injectivity of $H_u$ and $K_u$ as in the second step.

\vspace*{1ex}
\noindent\underline{Fourth step.}\hspace*{1ex} The eigenvalue $\lambda$ of $A_u-D\vert_F$ is simple.

Otherwise, there would be an eigenvector $e\in F$ associated to $\lambda$ such that $e\perp u_\sigma$. This would mean that $e\in E$, so by the previous point, $S^*e$ would be an eigenvector of $A_u-D\vert_F$ associated to $\lambda +1$, hence orthogonal to $u_\sigma$. Therefore, for all $j\geq 0$, $(S^*)^je$ would be a (non-zero) eigenvector of $A_u-D\vert_F$ associated to $\lambda +j$, and this contradicts the fact that $\dim F<\infty$.

Similarly, $A_u-D\vert_F$ has no eigenvalue $\mu >\lambda$, otherwise, iterating the third step, $\mu+ j$ would be an eigenvalue of $A_u-D\vert_F$ for all $j\in\mathbb{N}$.

Now we are able to finish the proof. Let $\mu$ be the smallest eigenvalue of $A_u-D$ on $F$ and $\tilde{e}$ a corresponding eigenvector. Then $\nu:=\lambda-\mu$ should be a nonnegative integer, or then the $(S^*)^j\tilde{e}$, $j\geq 0$, would give rise to an infinite sequence of orthogonal eigenvectors. We also have $(S^*)^{\nu}\tilde{e}\in\mathbb{C}u_\sigma$, thanks to the previous remark. Moreover, since $S^*$ is injective on $E$ (recall that $E\subseteq 1^\perp$), all the eigenvalues $\mu ,\mu +1, \ldots, \mu+\nu -1$ are simple as well on $F$. Thus $\dim F=\nu +1$.

It is now straightforward to see that $\nu =n-1$, with $n=2\lambda-\varpi$ as above. On the one hand, $K_u(u_\sigma)$ is an eigenvector of $A_u-D\vert_F$ associated to $\varpi +1-\lambda$, which means that $\nu=\lambda-\mu \geq \lambda - (\varpi +1-\lambda)=n-1$ by minimality of $\mu$. On the other hand, $K_u(\tilde{e})$ is an eigenvector associated to $\varpi +1-\mu$, so $\nu=\lambda-\mu \geq (\varpi+1-\mu )-\mu= \varpi+1-2(\lambda -\nu)$ by maximality of $\lambda$, and hence $n-1\geq \nu$. This establishes the yielded formula :
\[ (A_u-D)u_\sigma = \frac{\varpi +\dim F}{2}u_\sigma,\]
in addition to the fact that $\varpi+1-\lambda=\frac{1}{2}(\varpi +2-\dim F)$ is the smallest eigenvalue of $A_u-D\vert_F$.
\end{proof}

We point out a by-product of the last part of the proof :
\begin{cor}\label{angle-Ku}
There exists a complex number $\zeta_\sigma\in\mathbb{C}\setminus\{0\}$ such that we have
\begin{equation}\label{angle-Ku-formule} K_u(u_\sigma)= \zeta_\sigma z^{n_\sigma -1}u_\sigma,
\end{equation}
where $n_\sigma=\dim F$ as above.
\end{cor}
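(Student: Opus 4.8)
The plan is to extract the corollary from the machinery already built in the proof of Proposition \ref{Au-D}. The last step of that proof identified $\nu = n_\sigma - 1$ and exhibited an eigenvector $\tilde e$ of $A_u - D|_F$ for the smallest eigenvalue $\mu = \varpi + 1 - \lambda$, with $(S^*)^{\nu}\tilde e \in \mathbb{C}u_\sigma$. The first thing I would do is record precisely what the third step gives: the operator $S^*$ maps the $\mu$-eigenspace of $A_u - D|_F$ into the $(\mu+1)$-eigenspace, and since all the intermediate eigenvalues $\mu, \mu+1, \dots, \mu + \nu - 1 = \lambda - 1$ are simple, iterating $S^*$ produces, up to scalars, a chain of eigenvectors culminating in a nonzero multiple of $u_\sigma$. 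Running this the other way, there is a scalar so that $u_\sigma$ is obtained from a $\mu$-eigenvector by applying $(S^*)^{\nu}$; equivalently, I want to go \emph{down} the ladder from $u_\sigma$.

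The key computation is then to apply $K_u$ to $u_\sigma$. From \eqref{commutK} we already know $K_u u_\sigma$ is an eigenvector of $A_u - D$ for the eigenvalue $\varpi + 1 - \lambda = \mu$, the \emph{smallest} eigenvalue on $F$. Since that eigenspace is one-dimensional (fourth step), $K_u u_\sigma$ is a nonzero scalar multiple of the bottom eigenvector $\tilde e$. On the other hand, $S^*$ sends the bottom eigenvector up the chain, and after $\nu = n_\sigma - 1$ steps we land back in $\mathbb{C}u_\sigma$; more precisely, because each eigenspace along the way is simple, $(S^*)^{n_\sigma - 1}$ restricted to the bottom eigenspace is an isomorphism onto $\mathbb{C}u_\sigma$. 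Therefore $(S^*)^{n_\sigma-1}(K_u u_\sigma)$ is a nonzero multiple of $u_\sigma$. Now the crucial algebraic point: $S^*$ is a left inverse of $S = T_{e^{ix}}$, i.e.\ $S^*(z \cdot) = \mathrm{id}$ on $L^2_+$, and more to the point multiplication by $z^{n_\sigma - 1}$ followed by $(S^*)^{n_\sigma - 1}$ is the identity. So I would instead argue: $K_u u_\sigma \in \mathbb{C}\tilde e$, and $z^{n_\sigma - 1}u_\sigma$ is also a nonzero element whose image under $(S^*)^{n_\sigma-1}$ is $u_\sigma$ — but I need $z^{n_\sigma-1}u_\sigma$ to in fact \emph{equal} a multiple of $\tilde e$. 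This will follow because $z^{n_\sigma - 1}u_\sigma$, being $S^{n_\sigma-1}$ applied to the top vector $u_\sigma$, must lie in the bottom eigenspace: indeed from $[A_u - D, S] = -[A_u-D,S^*]^* $ type relations, or more cleanly, since $S^*$ raises the eigenvalue by $1$ on these eigenspaces and is injective there, multiplication by $z$ (its partial inverse on the relevant subspaces) lowers it by $1$; iterating $n_\sigma - 1$ times sends the $\lambda$-eigenvector $u_\sigma$ to a $\mu$-eigenvector. Hence $z^{n_\sigma - 1}u_\sigma \in \mathbb{C}\tilde e \ni K_u u_\sigma$, and comparing the two we get $K_u u_\sigma = \zeta_\sigma z^{n_\sigma - 1} u_\sigma$ for some $\zeta_\sigma \in \mathbb{C}$. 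That $\zeta_\sigma \neq 0$ is immediate since $K_u$ is injective on $F \ni u_\sigma \neq 0$.

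The main obstacle is the bookkeeping that $z^{n_\sigma - 1} u_\sigma$ genuinely lands in the one-dimensional bottom eigenspace of $A_u - D|_F$ — i.e.\ that multiplication by $z$ is well-behaved as the inverse ladder operator on this finite chain, and in particular that $z^{n_\sigma-1}u_\sigma$ stays inside $F$. One has to check that $S u_\sigma$, $S^2 u_\sigma$, \dots remain in $\ran K_u = F \oplus E \oplus \cdots$ with the right eigenvalue; the cleanest route is to use $S^* S = I$ together with the third-step identity $(A_u - D)S^* e = (\mu + 1)S^* e$ for $e \in E$, applied in reverse: if $w$ is a $\lambda'$-eigenvector in $F$ with $\lambda' > \mu$ then $w \perp \tilde e$-chain-bottom forces $w \in E$... and then $Sw$... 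Rather than belabor this, I would simply invoke the explicit chain $\{(S^*)^j \tilde e\}_{0 \le j \le n_\sigma - 1}$ already constructed, normalize $\tilde e$ so that $(S^*)^{n_\sigma-1}\tilde e = u_\sigma$, deduce $\tilde e = z^{n_\sigma-1} u_\sigma + (\text{something killed by } (S^*)^{n_\sigma-1})$, and note the error term vanishes because $\tilde e \in \ran K_u \subseteq z\,\mathbb{C}[z] \oplus (\text{poles})$ has no constant term while the kernel of $(S^*)^{n_\sigma-1}$ consists of polynomials of degree $< n_\sigma - 1$; a short orthogonality/degree argument then pins down $\tilde e = c\, z^{n_\sigma-1}u_\sigma$. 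Combined with $K_u u_\sigma \in \mathbb{C}\tilde e$, this yields \eqref{angle-Ku-formule}.
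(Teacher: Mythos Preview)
Your overall strategy is sound and matches the paper's: use the chain of simple eigenspaces of $A_u-D|_F$ built in the proof of Proposition~\ref{Au-D}, identify $K_u u_\sigma$ as a nonzero multiple of the bottom eigenvector $\tilde e$, and relate $\tilde e$ to $z^{n_\sigma-1}u_\sigma$. But the final step, where you try to pin down $\tilde e = c\,z^{n_\sigma-1}u_\sigma$, contains an error and a gap.

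The error is the claim that $\tilde e\in\ran K_u$ ``has no constant term'' because $\ran K_u\subseteq z\,\mathbb{C}[z]\oplus(\text{poles})$. This inclusion is false: for $u=\sum_\ell 1/(1-p_\ell z)\in\mathcal{V}(2N)$ one has $K_u=H_{S^*u}$ with $S^*u=\sum_\ell p_\ell/(1-p_\ell z)$, and $\ran K_u$ is spanned by the $1/(1-p_\ell z)$, each of which has constant term~$1$. So $(\tilde e\,|\,1)=0$ does not follow from membership in $\ran K_u$. The gap is that even granting $(\tilde e\,|\,1)=0$, this is only one of the $n_\sigma-1$ orthogonality conditions you need: the difference $\tilde e - z^{n_\sigma-1}u_\sigma$ lies in $\ker(S^*)^{n_\sigma-1}=\vect\{1,z,\dots,z^{n_\sigma-2}\}$, and to conclude it vanishes you must show it is also \emph{orthogonal} to all of $1,z,\dots,z^{n_\sigma-2}$, not just to $1$.

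The missing ingredient is Lemma~\ref{orth1}: since $(S^*)^j\tilde e\in E$ for $0\le j\le n_\sigma-2$ (these are the eigenvectors for eigenvalues strictly below $\lambda$, hence orthogonal to $u_\sigma$, hence in $E$), and $E\subseteq 1^\perp$, one gets $(\tilde e\,|\,z^j)=((S^*)^j\tilde e\,|\,1)=0$ for all $j\le n_\sigma-2$. Since also $z^{n_\sigma-1}u_\sigma\perp z^j$ for $j\le n_\sigma-2$ trivially, the difference is both in and orthogonal to $\vect\{1,\dots,z^{n_\sigma-2}\}$, hence zero. The paper packages this more directly: from $E\subseteq 1^\perp$ one has $SS^*=I$ on $E$, so applying $S^{n_\sigma-1}$ to the identity $(S^*)^{n_\sigma-1}K_u u_\sigma=\zeta_\sigma u_\sigma$ and undoing each $S^*$ in turn (each intermediate vector lying in $E$) recovers $K_u u_\sigma=\zeta_\sigma z^{n_\sigma-1}u_\sigma$ immediately. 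Either route works once you invoke $E\perp 1$; your attempt to avoid it via the structure of $\ran K_u$ does not.
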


\begin{proof}[Proof.]
Indeed, with the terminology of the proof above, $K_u(u_\sigma)$ is non-zero and colinear to $\tilde{e}$. So $(S^*)^{n_\sigma-1}K_u(u_\sigma)=\zeta_\sigma u_\sigma$ for some $\zeta_\sigma\neq 0$. Now if $j< n_\sigma-1$, then $(S^*)^jK_u(u_\sigma)\in E$, and on $E$, we have $SS^*=I-(\cdot\vert 1)1=I$. Therefore, $S^{n_\sigma-1}(S^*)^{n_\sigma-1}K_u(u_\sigma)=K_u(u_\sigma)$, and \eqref{angle-Ku-formule} is proved.
\end{proof}

We summarize the results of this paragraph on Figure \ref{dessin}.
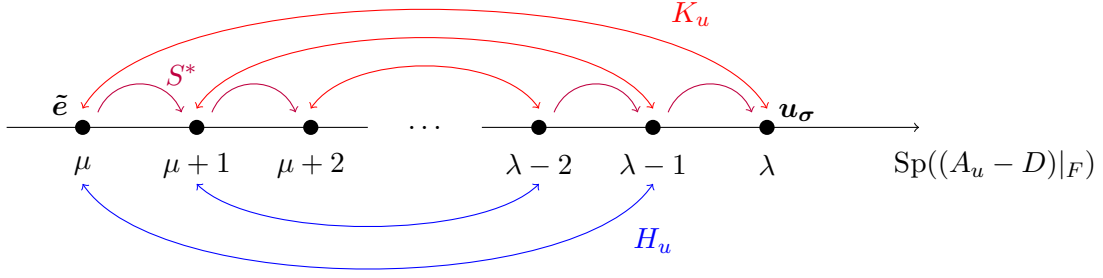
\begin{figure}[ht!]
\begin{center}
\begin{tikzpicture}
\shorthandoff{:;!?};
\fill (1,0) circle (0.1cm);
\fill (2.5,0) circle (0.1cm);
\fill (4,0) circle (0.1cm);
\fill (7,0) circle (0.1cm);
\fill (8.5,0) circle (0.1cm);
\fill (10,0) circle (0.1cm);

\draw (0,0) node{} -- (4.75,0) node{};
\draw[->] (6.25,0) node{} -- (12,0) node{};
\draw (5.5,0) node{$\ldots$};

\draw (1,-0.5) node {$\mu$};
\draw (2.5,-0.5) node {$\mu +1$};
\draw (4,-0.5) node {$\mu +2$};
\draw (7,-0.5) node {$\lambda -2$};
\draw (8.5,-0.5) node {$\lambda -1$};
\draw (10,-0.5) node {$\lambda$};
\draw (13,-0.5) node {$\mathrm{Sp}((A_u-D)\vert_F)$};
\draw (0.7,0.3) node {\textbf{$\tilde{e}$}};
\draw (10.4,0.2) node {\textbf{$u_\sigma$}};

\draw[red,<->] (1,0.25) .. controls (2,2) and (9,2) .. (10,0.25);
\draw[red,<->] (2.5,0.25) .. controls (3.25,1.5) and (7.75,1.5) .. (8.5,0.25);
\draw[red,<->] (4,0.25) .. controls (4.5,1) and (6.5,1) .. (7,0.25);
\draw[blue,<->] (1,-0.75) .. controls (2,-2.25) and (7.5,-2.25) .. (8.5,-0.75);
\draw[blue,<->] (2.5,-0.75) .. controls (3.25,-1.5) and (6.25,-1.5) .. (7,-0.75);
\draw[purple,->] (1.2,0.2) .. controls (1.4,0.7) and (2.1,0.7) .. (2.3,0.2);
\draw[purple,->] (2.7,0.2) .. controls (2.9,0.7) and (3.6,0.7) .. (3.8,0.2);
\draw[purple,->] (7.2,0.2) .. controls (7.4,0.7) and (8.1,0.7) .. (8.3,0.2);
\draw[purple,->] (8.7,0.2) .. controls (8.9,0.7) and (9.6,0.7) .. (9.8,0.2);

\node[text=red] at (9,1.5) {$K_u$};
\node[text=blue] at (8.5,-1.5) {$H_u$};
\node[text=purple] at (2.3,0.7) {$S^*$};

\end{tikzpicture}
\end{center}
\caption{The action of $H_u$, $K_u$ and $S^*$ on $F$.}
\label{dessin}
\end{figure}

\paragraph{$K_u^2$ has at most one positive eigenvalue.}
Combining the informations of Proposition \ref{Au-D} on each $K$-dominant eigenspace of $K_u^2$, we claim that $K_u$ cannot have more than one singular value. Let $\Sigma = \{ \sigma_m \mid 1\leq m\leq |\Sigma |\}$ be the set of the $K$-dominant eigenvalues of $K_u^2$, and $n_m=\dim (F_u(\sigma_m))$ for each $m$. As above, we introduce the orthogonal projection of $u$ onto the eigenspace $F_u(\sigma_m)$ and call it $u_m$. The norm $\|\cdot\|$ without further precision denotes the $L^2$ norm, and $Q$ refers to $\|u\|^2$.

We first prove a simple but crucial lemma.

\begin{lemme}
We have, for each $1\leq m\leq |\Sigma |$,
\begin{equation}\label{umvm}
(\varpi +n_m-2N)\cdot (u_m\vert 1)=2\|u_m\|^2.
\end{equation}
In particular, $(u_m\vert 1)$ is a positive real number.
\end{lemme}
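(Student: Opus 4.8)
The plan is to extract the identity \eqref{umvm} directly from the eigenvalue relation of Proposition \ref{Au-D}, by pairing $(A_u-D)u_\sigma = \frac{\varpi+n_m}{2}u_\sigma$ against suitable test vectors and using the explicit form \eqref{DES} of $u$. First I would take the scalar product of $(A_u-D)u_m=\frac{\varpi+n_m}{2}u_m$ with $u_m$ itself; since $A_u-D$ is self-adjoint this is already an honest real number, but it only yields $((A_u-D)u_m\mid u_m)=\frac{\varpi+n_m}{2}\|u_m\|^2$, which is not yet what I want. The better move is to pair instead with the constant function $1$: this gives
\[ ((A_u-D)u_m\mid 1) = \tfrac{\varpi+n_m}{2}(u_m\mid 1). \]
Now $(Du_m\mid 1)=0$ because $D=z\partial_z$ kills constants and is self-adjoint, so the left side is $(A_u u_m\mid 1)=(T_u u_m+T_{\bar u}u_m\mid 1)=(u u_m\mid 1)+(u_m\mid u)$. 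Here I use $(A_u u_m\mid 1)=(u_m\mid A_u 1)$ and $A_u 1 = \Pi(u)+\Pi(\bar u)=u+\overline{(u\mid1)}=u+N$ by \eqref{moyenne}; hence $(A_u u_m\mid 1)=(u_m\mid u)+N\overline{(u_m\mid1)}=(u_m\mid u)+N(u_m\mid 1)$, the last step because $(u_m\mid 1)$ will turn out real — or, to avoid circularity, I should keep it as $(u_m\mid u)+N\overline{(u_m\mid 1)}$ for now.

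The remaining ingredient is to identify $(u_m\mid u)$. Since $u=\sum_\sigma u_\sigma$ is the spectral decomposition of $u$ along the pairwise orthogonal spaces $F_u(\sigma)$ (relation \eqref{reconstr}), and $u_m$ is the projection onto $F_u(\sigma_m)$, orthogonality gives $(u_m\mid u)=\|u_m\|^2$. Plugging this in, the identity $((A_u-D)u_m\mid 1)=\frac{\varpi+n_m}{2}(u_m\mid 1)$ becomes
\[ \|u_m\|^2 + N\overline{(u_m\mid 1)} = \tfrac{\varpi+n_m}{2}(u_m\mid 1). \]
Taking the real part (or noting $\|u_m\|^2$ and the other quantities will force $(u_m\mid 1)\in\mathbb{R}$) and rearranging gives exactly $(\varpi+n_m-2N)(u_m\mid 1)=2\|u_m\|^2$, which is \eqref{umvm}. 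The positivity of $(u_m\mid 1)$ then follows provided $\varpi+n_m-2N>0$; this sign should come from the third step of the proof of Proposition \ref{Au-D}, where $\varpi+1-\lambda=\frac12(\varpi+2-n_m)$ is the smallest eigenvalue of $(A_u-D)|_F$, compared against the bound $((A_u-D)h\mid h)\geq -M\|h\|^2$ used earlier to get finite rank; alternatively, once $(u_m\mid 1)$ is known real, \eqref{umvm} itself shows $(u_m\mid 1)$ and $\varpi+n_m-2N$ have the same sign, and one rules out the negative case by summing over $m$ and using $\sum_m(u_m\mid1)=(u\mid 1)=N>0$.

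The main obstacle I anticipate is the bookkeeping around $A_u 1$ and making sure the reality of $(u_m\mid 1)$ is established without circular reasoning: the cleanest route is probably to first prove the complex identity $\|u_m\|^2+N\overline{(u_m\mid1)}=\frac{\varpi+n_m}{2}(u_m\mid1)$, observe that its imaginary part forces $\im(u_m\mid1)=0$ (since $\|u_m\|^2$ and $\frac{\varpi+n_m}{2}$ are real, one gets $-N\im(u_m\mid1)=\frac{\varpi+n_m}{2}\im(u_m\mid1)$, i.e. $(\varpi+n_m+2N)\im(u_m\mid1)=0$, and $\varpi+n_m+2N$ cannot vanish since $n_m\geq1$ and, as we will see, $\varpi$ is essentially positive), and only then read off \eqref{umvm} and the positivity from the real part together with $\sum_m(u_m\mid1)=N$.
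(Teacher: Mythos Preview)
Your derivation of \eqref{umvm} is essentially the paper's: pair $(A_u-D)u_m=\tfrac{\varpi+n_m}{2}u_m$ with $1$, use self-adjointness to reduce to $(u_m\mid (A_u-D)1)$, compute $(A_u-D)1=u+N$ via \eqref{moyenne}, and use $(u_m\mid u)=\|u_m\|^2$ from the orthogonal decomposition \eqref{reconstr}. One small slip: $(u_m\mid N\cdot 1)=\bar N(u_m\mid 1)=N(u_m\mid 1)$ directly, with no conjugate on $(u_m\mid 1)$; so the identity you obtain is already $\|u_m\|^2+N(u_m\mid 1)=\tfrac{\varpi+n_m}{2}(u_m\mid 1)$, i.e.\ \eqref{umvm}, and the whole discussion of ``taking imaginary parts'' to force reality is unnecessary (and your imaginary-part equation $(\varpi+n_m+2N)\im(u_m\mid1)=0$ is an artifact of that slip).

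The genuine gap is the positivity of $(u_m\mid 1)$. Your two proposed routes do not close. The spectrum-bound idea from the finite-rank step gives only a lower bound of the form $\lambda\geq -\|A_u\|$, which says nothing about $\varpi+n_m-2N$. The summing idea also fails: from \eqref{umvm} you learn that each $(u_m\mid 1)$ has the same sign as $\varpi+n_m-2N$, but the signs of $\varpi+n_m-2N$ could a priori differ across $m$, so $\sum_m(u_m\mid 1)=N>0$ does not force every term to be positive. The paper's argument is the missing ingredient: take the scalar product of \eqref{eq-u} with $1$ to get $\varpi N=2Q+N^2$, then apply Cauchy--Schwarz $N^2=|(u\mid 1)|^2\leq Q$ to conclude $\varpi\geq 3N$, whence $\varpi+n_m-2N\geq \varpi-2N>0$ for every $m$. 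This single inequality simultaneously gives reality and positivity of $(u_m\mid 1)$ from \eqref{umvm}.
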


\begin{proof}[Proof.]
We know that $u_m$ is a non-zero eigenvector of the operator $A_u-D$. Let $\lambda$ be the associated eigenvalue. Since $(A_u-D)1=\Pi(u+\bar{u})$, and by \eqref{moyenne}, $\Pi(\bar{u})=(1\vert u)=N$, we can compute
\[ \lambda (u_m\vert 1)=((A_u-D)u_m\vert 1)=(u_m\vert (A_u-D)1)=\|u_m\|^2+N(u_m\vert 1),\]
so that $(u_m\vert 1)(\lambda -N)=\|u_m\|^2$. But by Proposition \ref{Au-D}, we have $\lambda=\frac{1}{2}(\varpi +n_m)$, which gives formula \eqref{umvm}.

As $\|u_m\|^2>0$, it remains to show that $\varpi+n_m-2N$ is a positive number. Taking the scalar product of equation \eqref{eq-u} with $1$ leads to the following identity :
\begin{equation}\label{Q}
\varpi N=2Q+N^2.
\end{equation}
However, the Cauchy-Schwarz inequality yields $N^2=|(u\vert 1)|^2\leq Q$, so $\varpi N\geq 3N^2$, which means that $\varpi \geq 3N$. A fortiori $\varpi+n_m-2N\geq \varpi -2N>0$, and the proof of the lemma is complete.
\end{proof}

We are now able to prove our claim :
\begin{lemme}\label{singleton}
Necessarily, $\Sigma$ is a singleton $\{\sigma\}$, and $\dim F_u(\sigma)=N$.
\end{lemme}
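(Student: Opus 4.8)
The plan is to sum the identity \eqref{umvm} over all $K$-dominant eigenvalues and to confront the outcome with the two scalar constraints \eqref{moyenne} and \eqref{Q}.

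First I would record the global identities coming from the decomposition \eqref{reconstr}. Since $0\notin\Sigma$, we may write $u=\sum_m u_m$ with the sum running over $\Sigma$ only (the component of $u$ on $\ker K_u^2$ vanishes because $u=H_u(1)\in\ran H_u=\ran K_u$, and $\ran K_u\perp\ker K_u$), and the $u_m$ are pairwise orthogonal, being the projections of $u$ onto the distinct eigenspaces $F_u(\sigma_m)$ of the self-adjoint operator $K_u^2$. Hence $Q=\|u\|^2=\sum_m\|u_m\|^2$ and, by \eqref{moyenne}, $\sum_m(u_m\vert 1)=(u\vert 1)=N$. Summing \eqref{umvm} then gives
\[ 2Q=2\sum_m\|u_m\|^2=\sum_m(\varpi+n_m-2N)(u_m\vert 1)=(\varpi-2N)N+\sum_m n_m(u_m\vert 1), \]
and comparing with $2Q=\varpi N-N^2$, which is \eqref{Q}, yields $\sum_m n_m(u_m\vert 1)=N^2=N\sum_m(u_m\vert 1)$, that is,
\[ \sum_m (N-n_m)(u_m\vert 1)=0. \]

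The key point is then that $n_m\le N$ for every $m$: since $0\notin\Sigma$, each $F_u(\sigma_m)$ is a subspace of $\ran K_u$, and $\dim\ran K_u=N$ because $u\in\mathcal{V}(2N)$. As moreover $(u_m\vert 1)>0$ by \eqref{umvm}, every term in the last identity is nonnegative, so all of them vanish and $n_m=N$ for all $m$. Consequently each $F_u(\sigma_m)$ is an $N$-dimensional subspace of the $N$-dimensional space $\ran K_u$, hence equals $\ran K_u$; but eigenspaces attached to distinct eigenvalues $\sigma_m^2$ must be orthogonal, which is impossible once there are two of them (recall $N\ge 1$). Therefore $\Sigma=\{\sigma\}$ is a singleton and $\dim F_u(\sigma)=N$.

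I do not expect a genuine difficulty here. The only slightly delicate points are checking that the reconstruction sum \eqref{reconstr} effectively runs over the positive eigenvalues only (equivalently, $u\perp\ker K_u$), and pinning down the inequality $n_m\le N$, which is precisely what makes the weighted-sum identity sign-definite and forces the spectrum of $K_u^2$ to reduce to a single positive value. (Alternatively one could first derive $(A_u-D)u=\varpi u-H_u u$ from \eqref{eq-u}, combine it with Proposition~\ref{Au-D} and \eqref{umvm} to get the explicit formula $\sigma_m^2=\tfrac14\big((\varpi-2N)^2-n_m^2\big)$, but this refinement is not needed for the statement.)
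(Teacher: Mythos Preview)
Your proposal is correct and follows essentially the same argument as the paper: sum \eqref{umvm}, use \eqref{moyenne} and \eqref{Q} to reach $\sum_m (N-n_m)(u_m\vert 1)=0$, then exploit $n_m\le N$ and $(u_m\vert 1)>0$ to force $n_m=N$ and $|\Sigma|=1$. The only cosmetic difference is in the last line: the paper concludes via $N\ge\sum_m n_m=N|\Sigma|$, whereas you argue that each $F_u(\sigma_m)$ would equal $\ran K_u$ and invoke orthogonality of distinct eigenspaces---both are equivalent one-liners.
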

\begin{proof}[Proof.]
We are going to sum identity \eqref{umvm} over $m$. This gives, using \eqref{reconstr} and the fact that $u_m\perp u_{m'}$ for $m\neq m'$,
\[ (\varpi-2N)\cdot (u\vert 1)+ \sum_{m=1}^{|\Sigma|}n_m(u_m\vert 1)=2\|u\|^2.\]
As we know from \eqref{Q} that $2Q=\varpi N-N^2=(\varpi -N)\cdot (u\vert 1)$, we get
\[ \sum_{m=1}^{|\Sigma|}n_m(u_m\vert 1)=N \cdot (u\vert 1)=N\sum_{m=1}^{|\Sigma|} (u_m\vert 1),\]
or equivalently
\begin{equation}\label{final_sum}
\sum_{m=1}^{|\Sigma|} (N-n_m)\cdot (u_m\vert 1)=0.
\end{equation}

But $\sum_mn_m\leq \rg K_u^2=N$, so $n_m\leq N$. Together with the preceding lemma, this shows that the sum in \eqref{final_sum} only involves nonnegative terms. Each of them must then be zero, \emph{i.e.} we must have $n_m=N$ for all $1\leq m\leq {|\Sigma|}$. Hence $N\geq \sum_mn_m=N|\Sigma|$, so finally $|\Sigma|=1$.
\end{proof}

\paragraph{Determination of $u$.}
Because of \eqref{reconstr}, Lemma \ref{singleton} implies that $u$ is an eigenvector of $K_u^2$, \emph{i.e.} $K_u^2(u)=\sigma^2 u$ for some $\sigma >0$. Hence $H_u^2(u)=(\sigma^2+Q)u$, so $u$ is also an eigenvector of $H_u^2$. Now, coming back to Proposition \ref{Au-D}, this means that $u_\sigma=u$, and $\dim F=N$, hence
\[ (A_u-D)u=\frac{\varpi +N}{2}u.\]
However, $(A_u-D)u=\Pi(|u|^2)+u^2-Du=\varpi u-\Pi(|u|^2)$, by equation \eqref{eq-u}. Therefore,
\[H_u(u)=\varpi u-(A_u-D)u=\frac{\varpi -N}{2}u.\]
On the other hand, by Corollary \ref{angle-Ku}, we also have, for some $\zeta\neq 0$,
\[K_u(u)=\zeta z^{N-1}u.\]
Combining the two informations, we can write $\zeta z^{N-1}u=K_u(u)=S^*H_u(u)=\frac{\varpi -N}{2}S^*u$. Applying $S$ again, we get the existence of some $\alpha\neq 0$ such that $SS^*u=\alpha z^Nu$. Besides, $SS^*u=u-(u\vert 1)=u-N$. This finishes to prove that
\[ u(z)=\frac{N}{1-\alpha z^N}.\]
In other words, going back to formula \eqref{DES}, this shows that $\mathcal{P}$ in \eqref{syst_pl} has to be the set of the $N$th roots of some complex number $\alpha$ such that $|\alpha|<1$.

\begin{rem}
In terms of the inverse spectral transform of \cite[Section 5]{livrePG}, we have proved that $H_u$ has one dominant singular value of multiplicity $1$, and that $K_u$ has one dominant singular value $\sigma$ of multiplicity $N$. Corollary \ref{angle-Ku} says that the angle associated to $\sigma$ is $e^{i\psi}z^{N-1}$ for some $\psi\in\mathbb{T}$.
\end{rem}

\paragraph{The reverse statement.} Now that we have found what form a traveling wave should have a priori, we need to show that functions of this form are traveling waves for \eqref{quad} indeed. From a straightforward computation based on \eqref{syst_pl}, we can easily find the traveling waves of $\mathcal{V}(2)$ and $\mathcal{V}(3)$ : the function $u(z)=\frac{1}{1-pz}$, for $|p|<1$, satisfies
\[ \varpi u+Du=2\Pi(|u|^2)+u^2, \quad\text{with }\varpi \in\mathbb{R},\]
and no other function of $\mathcal{V}(2)$ does. Besides, $\varpi=\frac{3-|p|^2}{1-|p|^2}$ because of \eqref{syst_pl}. Then, if $\lambda\in\mathbb{C}$, set $v:=\lambda u$. We have
\[J(v)=|\lambda|^2\lambda J(u)=\frac{|\lambda|^2\lambda}{(1-|p|^2)^2},\]
which leads to
\[ 2J(v)\Pi(|v|^2)+\overline{J(v)}v^2=\frac{|\lambda|^4\lambda}{(1-|p|^2)^2}(2\Pi(|u|^2)+u^2)=\frac{|\lambda|^4(3-|p|^2)}{(1-|p|^2)^3}v+\frac{|\lambda|^4}{(1-|p|^2)^2}Dv.\]
Thus $v$ gives rise to a traveling wave solution of \eqref{quad} with
\[ \omega =|\lambda|^4\frac{3-|p|^2}{(1-|p|^2)^3},\quad c=|\lambda|^4\frac{1}{(1-|p|^2)^2}. \]

Similarly, in $\mathcal{V}(3)$, the only solution of $\varpi u+Du=2\Pi(|u|^2)+u^2$ has the form $u=\beta +\frac{1}{1-pz}$, and by \eqref{beta} and \eqref{syst_pl}, we must have
\[\left\lbrace \begin{aligned}
\varpi -4\beta&=\frac{3-|p|^2}{1-|p|^2},\\
\varpi -3\beta&=2,
\end{aligned}\right.\]
or equivalently $\beta = -\frac{1+|p|^2}{1-|p|^2}$ and $\varpi=-\frac{1+5|p|^2}{1-|p|^2}$. Then $v:=\lambda u$ for $\lambda\in \mathbb{C}$ is such that
\[ J(v)=|\lambda|^2\lambda J(u)=-\frac{|\lambda|^2\lambda\cdot |p|^4(3+5|p|^2)}{(1-|p|^2)^3},\]
as show the formulae of Appendix \ref{formulaire} for instance. As above, we finally get that $v$ is the initial state of a traveling wave with
\[ \omega =|\lambda|^4\frac{|p|^4(3+5|p|^2)(1+5|p|^2)}{(1-|p|^2)^4},\quad c=-|\lambda|^4\frac{|p|^4(3+5|p|^2)}{(1-|p|^2)^3}.\]

The general case of $\mathcal{V}(2N)$ and $\mathcal{V}(2N+1)$ relies on an invariance argument discovered in \cite{Xu3} in the case of the cubic Szeg\H{o} equation, and which also applies here.

\begin{déf}
An \emph{inner function} is a function $f\in L^2_+$ such that for almost every $x\in\mathbb{T}$,
\[|f(e^{ix})|=1.\]
\end{déf}

\noindent Among such functions, some are rational functions : they are called \emph{Blaschke products}, and are of the form
\[ z\mapsto \prod_{\ell=1}^L \frac{z-\overline{p_\ell}}{1-p_\ell z},\quad p_\ell\in\mathbb{D},\, \forall 1\leq \ell\leq L.\] 

The invariance result is the following :

\begin{prop}[Invariance by composition with an inner function]\label{invar}
Let $f\in L^2_+$ be an inner function, and $t\mapsto u(t,z)$ a solution of \eqref{quad} starting from $u(0,z)=u_0(z)\in BMO_+(\mathbb{T})$. Then
\[ t\mapsto u(t,zf(z))\]
is the solution of \eqref{quad} in $BMO_+$ starting from initial data $u_0(zf(z))$.
\end{prop}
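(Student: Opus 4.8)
Following the invariance argument of \cite{Xu3} for the cubic Szeg\H{o} equation, the plan is to show that composition with the inner function $b(z):=zf(z)$ conjugates the flow of \eqref{quad} to itself. Note that $b$ is inner with $b(0)=0$ --- this is exactly where the hypothesis $f\in L^2_+$ enters --- and that $b$ maps $\mathbb{D}$ into $\mathbb{D}$. Writing $C_b:g\mapsto g\circ b$, I would set $v(t,\cdot):=C_b\,u(t,\cdot)=u(t,zf(z))$ and aim to prove that $v$ solves \eqref{quad} with datum $C_bu_0$; the statement then follows at once from the uniqueness of the $BMO_+$-flow recalled in the introduction.

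The whole matter reduces to the intertwining identity
\[
\Pi(g\circ b)=(\Pi g)\circ b,\qquad g\in L^2(\mathbb{T}).
\]
To prove it, test on Fourier modes: for $k\ge 0$, $e^{ikx}\circ b=b^k\in L^2_+$, so $\Pi(e^{ikx}\circ b)=b^k=(\Pi e^{ikx})\circ b$; for $k\le -1$, $e^{ikx}\circ b=\overline{b^{|k|}}$, and since $b^{|k|}\in L^2_+$ vanishes at the origin (here $b(0)=0$ is essential), $\overline{b^{|k|}}$ carries only strictly negative frequencies, hence $\Pi(e^{ikx}\circ b)=0=(\Pi e^{ikx})\circ b$. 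One then extends by linearity and density, using that $C_b$ is an isometry of $L^2(\mathbb{T})$ --- equivalently, that $\{b^k\}_{k\ge 0}\cup\{\overline{b^k}\}_{k\ge 1}$ is orthonormal in $L^2(\mathbb{T})$, a classical consequence of $b$ being inner with $b(0)=0$. From this I would read off the three ingredients needed. First, $C_b$ maps $L^2_+$ into $L^2_+$, and if $u=\Pi g$ with $g\in L^\infty$, then $C_bu=\Pi(g\circ b)$ with $\|g\circ b\|_{\infty}\le\|g\|_{\infty}$, so $C_b$ is bounded on $BMO_+$. Second, taking the constant Fourier coefficient in the same mode-by-mode computation gives $\int_{\mathbb{T}}(h\circ b)=\int_{\mathbb{T}}h$ for every $h\in L^2(\mathbb{T})$, which applied to $h=|u|^2u$ yields $J(C_bu)=\int_{\mathbb{T}}(|u|^2u)\circ b=J(u)$. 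Third, composition commutes trivially with pointwise products and with conjugation on $\mathbb{T}$, so $(|u|^2)\circ b=|C_bu|^2$ and $(u^2)\circ b=(C_bu)^2$.

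Combining these, the nonlinearity is intertwined by $C_b$: for $v=C_bu$,
\[
\big(2J(u)\,\Pi(|u|^2)+\overline{J(u)}\,u^2\big)\circ b=2J(v)\,\Pi(|v|^2)+\overline{J(v)}\,v^2 .
\]
As $b$ does not depend on $t$ and $C_b$ is a bounded linear operator, it commutes with $\partial_t$, so applying $C_b$ to \eqref{quad} gives $i\partial_t v=2J(v)\,\Pi(|v|^2)+\overline{J(v)}\,v^2$. Since $v(t,\cdot)=C_bu(t,\cdot)\in BMO_+$ depends continuously on $t$ and $v(0,\cdot)=C_bu_0=u_0(zf(z))$, uniqueness for the $BMO_+$-flow identifies $v$ with the solution issued from $u_0(zf(z))$.

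The only genuinely delicate point is not the algebra but the regularity bookkeeping: one must check that ``$v$ solves \eqref{quad} in $BMO_+$'' and ``$C_b\,\partial_tu=\partial_t\,C_bu$'' hold in the precise sense in which the flow is known to exist, for a possibly non-rational inner function $f$. A safe route is to prove the identity first when $f$ is a finite Blaschke product --- then $b\in C^\infty_+(\mathbb{T})$, $C_b$ preserves every $H^s_+$, and $u,v$ are classical solutions for smooth data --- and then to pass to an arbitrary inner function by approximating $f$ by finite Blaschke products (uniformly on compact subsets of $\mathbb{D}$) and invoking continuous dependence of the flow on the initial datum; alternatively one may argue pointwise on the disc, using $b(\mathbb{D})\subseteq\mathbb{D}$ and the fact that, for each fixed $z\in\mathbb{D}$, $t\mapsto u(t,b(z))$ inherits enough regularity to be differentiated directly.
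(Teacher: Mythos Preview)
Your proof is correct and follows essentially the same route as the paper: your composition operator $C_b$ is exactly the paper's $\mathcal{T}_f$, and your three ingredients (isometry on $L^2$, commutation with $\Pi$, compatibility with products and conjugation) are precisely the content of the paper's Lemma~\ref{Tf_alg}. The paper's proof is slightly more streamlined in that it derives $J(\mathcal{T}_fu)=J(u)$ directly from the isometry property via $(u^2\vert u)=(\mathcal{T}_f(u^2)\vert\mathcal{T}_fu)$, and it does not dwell on the regularity bookkeeping you flag in your final paragraph; your extra caution there is reasonable but not something the paper addresses.
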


Let us first show how this proposition allows us to conclude. Assume $v(t,z)$ is a traveling wave with $v(0,z)=v_0(z)$. Equation \eqref{trav} is equivalent to
\[ \widehat{v(t)}(k)=\widehat{v_0}(k)e^{-i(\omega +c\cdot k)t}, \quad\forall k\in\mathbb{Z},\]
where $\,\widehat{\cdot}\,$ is the Fourier transform. Now, if $N\geq 1$ and $f(z)=z^{N-1}$, the invariance proposition states that $v(t,z^N)$ is the solution starting from $v_0(z^N)$. Writing $w(t,z):=v(t,z^N)$, we get
\[ \widehat{w(t)}(k)=\widehat{v_0}(k)e^{-i(\omega +\frac{c}{N}\cdot k)t}, \quad\forall k\in\mathbb{Z}\text{ s.t. }N\vert k.\]
Thus, the set of traveling waves for equation \eqref{quad} is stable by composition with $z^N$. Moreover, the pulsation does not change and the velocity is divided by $N$. This ends the proof of Theorem \ref{main}.

\vspace*{1ex}
To prove Proposition \ref{invar}, we introduce an operator on $L^2(\mathbb{T})$. If $f$ is an inner function, we set
\[ \mathcal{T}_f: \left\lbrace \begin{matrix}
L^2(\mathbb{T})&\longrightarrow &L^2(\mathbb{T}) \\
\sum_{k=-\infty}^{+\infty} u_ke^{ikx}&\longmapsto &\sum_{k=-\infty}^{+\infty}u_k[e^{ix}f(e^{ix})]^k.
\end{matrix}\right. \]
The operator $\mathcal{T}_f$ satisfies some useful algebraic properties that we sum up in the next lemma.

\pagebreak
\begin{lemme}\label{Tf_alg}
\begin{enumerate}
\item $\mathcal{T}_f$ is an isometry of $L^2(\mathbb{T})$.
\item $\mathcal{T}_f\circ \Pi=\Pi\circ \mathcal{T}_f$.
\item $\mathcal{T}_f$ induces a $*$-endomorphism of the C$^*$-algebra $L^\infty(\mathbb{T})$.
\end{enumerate}
\end{lemme}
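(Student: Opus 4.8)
The plan is to verify the three algebraic properties essentially by direct computation on Fourier expansions, using crucially that $|ef(e^{ix})|=|f(e^{ix})|=1$ a.e.\ since $f$ is inner (here $e=e^{ix}$). Write $g(e^{ix}):=e^{ix}f(e^{ix})$, so that $\mathcal{T}_f$ acts by $e^{ikx}\mapsto g^k$ for every $k\in\mathbb{Z}$ (for negative $k$ this makes sense because $|g|=1$ a.e., so $g^{-1}=\bar g$ a.e.). The key observation to record first is that $g$ is a unimodular function, hence $g^j\,\overline{g^k}=g^{j-k}$ a.e.\ for all $j,k\in\mathbb{Z}$.

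For (i), I would compute $(\mathcal{T}_f u\mid \mathcal{T}_f v)$ for $u=\sum_k u_k e^{ikx}$, $v=\sum_k v_k e^{ikx}$: expanding, $(\mathcal{T}_f u\mid \mathcal{T}_f v)=\sum_{j,k}u_j\overline{v_k}\int_{\mathbb{T}}g^j\overline{g^k}=\sum_{j,k}u_j\overline{v_k}\int_{\mathbb{T}}g^{j-k}$. Since $g=ef$ with $f\in L^2_+$ an inner function, $g$ is itself inner and in particular $\int_{\mathbb{T}}g^m=0$ for $m>0$ (no constant term, as $g$ has only nonnegative modes with vanishing zeroth mode — actually $g=ef$ and $f(0)$ need not vanish, but $g$ has a zero at the origin so $\widehat g(0)=0$; more robustly, for $m>0$, $\int g^m = \widehat{g^m}(0) = 0$ because $g^m\in L^2_+$ with $g^m(0)=0$), and by conjugation $\int_{\mathbb{T}}g^m=0$ for $m<0$ as well, while $\int_{\mathbb{T}}g^0=1$. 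Hence only the diagonal $j=k$ survives and $(\mathcal{T}_f u\mid\mathcal{T}_f v)=\sum_k u_k\overline{v_k}=(u\mid v)$. One should check $\mathcal{T}_f u\in L^2$ first — but that follows from the same computation with $u=v$, giving $\|\mathcal{T}_f u\|^2=\|u\|^2$ by monotone convergence on partial sums. Property (ii) is immediate from the definition: $\Pi$ projects onto nonnegative Fourier modes, and $\mathcal{T}_f$ sends $e^{ikx}$ to $g^k$ without mixing indices $k$, so both compositions send $\sum u_k e^{ikx}$ to $\sum_{k\geq 0}u_k g^k$.

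For (iii), I would first argue that $\mathcal{T}_f$ maps $L^\infty$ into $L^\infty$: if $b\in L^\infty$ then on the Fourier side $\mathcal{T}_f$ is substitution, and more concretely, by density one reduces to trigonometric polynomials $b=\sum_{|k|\leq n}b_k e^{ikx}$, for which $\mathcal{T}_f b=\sum_{|k|\leq n}b_k g^k$ is manifestly in $L^\infty$ with $\|\mathcal{T}_f b\|_\infty\leq \|b\|_\infty$ — the cleanest way is to note that $e^{ix}\mapsto g(e^{ix})$ composed with $b$ as a map on $\mathbb{T}$ preserves the essential sup because $g$ is unimodular, i.e.\ $\mathcal{T}_f b = b\circ(\text{the measure-class map induced by }g)$ in a suitable sense; but the honest proof is the density/partial-sum argument combined with the isometry from (i) to pass to the limit in $L^2$ and extract the $L^\infty$ bound. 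Then multiplicativity $\mathcal{T}_f(b_1 b_2)=\mathcal{T}_f(b_1)\mathcal{T}_f(b_2)$ and conjugation-compatibility $\mathcal{T}_f(\bar b)=\overline{\mathcal{T}_f(b)}$ both follow from the relations $g^j g^k=g^{j+k}$ and $\overline{g^k}=g^{-k}$ a.e., first for trigonometric polynomials and then by density (using that pointwise a.e.\ convergent, uniformly bounded sequences converge weak-$*$ in $L^\infty$, which suffices to identify the products). That $\mathcal{T}_f(1)=1$ is clear.

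The main obstacle, and the only point requiring care, is the passage from formal manipulation of Fourier series to genuine $L^\infty$ (and $L^2$) statements: a priori the image series $\sum u_k g^k$ need not converge in any obvious sense for general $u\in L^2$. I would handle this exactly as above — establish everything first on the dense subspace of trigonometric polynomials, where all sums are finite and the identities $g^j\overline{g^k}=g^{j-k}$, $\int g^m=\delta_{m,0}$ are elementary, then extend to $L^2$ by the isometry property and to $L^\infty$ by the contractivity $\|\mathcal{T}_f b\|_\infty\leq\|b\|_\infty$ on polynomials together with weak-$*$ density. The $*$-algebra identities, being closed conditions, then survive the limit.
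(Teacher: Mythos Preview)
Your argument is correct, and for (i) and (ii) it coincides with the paper's: both establish that $\{(e^{ix}f)^k\}_{k\in\mathbb Z}$ is orthonormal by computing $\int_{\mathbb T}(e^{ix}f)^m=\delta_{m,0}$, and both note that $\Pi$ fixes $(e^{ix}f)^k$ for $k\ge 0$ and annihilates it for $k<0$. For (iii) the paper is more direct: rather than working first on trigonometric polynomials and then extending by a density/weak-$*$ argument, it observes once and for all that for any $v\in L^\infty$ (identified with a bounded everywhere-defined representative $\tilde v$) one has $(\mathcal T_f v)(e^{ix})=\tilde v(e^{ix}f(e^{ix}))$, so the $L^\infty$ bound, multiplicativity, and $*$-compatibility all follow immediately from properties of composition. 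Your route works too, but notice that the inequality $\|\mathcal T_f b\|_\infty\le\|b\|_\infty$ for a trigonometric polynomial $b$, which you call ``manifest'', is \emph{exactly} this composition observation---there is no other way to see it from the formula $\sum b_k g^k$ alone---so you are already invoking the paper's key step and then layering an approximation argument on top. The paper's approach buys brevity (no weak-$*$ limits, no uniform-boundedness bookkeeping); your approach, if written out carefully, has the minor advantage of sidestepping the question of whether the substitution $e^{ix}\mapsto e^{ix}f(e^{ix})$ is well-defined on $L^\infty$-equivalence classes.
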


\begin{proof}[Proof.]
For $k\in\mathbb{Z}$, let us introduce $e^f_k(x)=e^{ikx}f(e^{ix})^k$ for $x\in\mathbb{T}$. To see that $\mathcal{T}_f$ is an isometry, it suffices to notice that $\{e^f_k\}_{k\in \mathbb{Z}}$ is a orthonormal family of $L^2$. If $p\geq q$, we have
\[ (e^f_p\vert e^f_q)=\frac{1}{2\pi}\int_{-\pi}^{\pi} e^f_p(x)\overline{e^f_q(x)}dx=\frac{1}{2\pi}\int_{-\pi}^{\pi}e^{i(p-q)x}f(e^{ix})^{p-q}dx=\delta_{pq},\]
for $f$ has nonnegative frequencies only. Hence, for any $u=\sum_{k=-\infty}^{+\infty} u_ke^{ikx}\in L^2(\mathbb{T})$, $\|\mathcal{T}_fu\|_{L^2}^2=\sum_{k=-\infty}^{+\infty} |u_k|^2=\|u\|_{L^2}^2$. As $\mathcal{T}_f$ is linear, this proves \textit{(i)}.

Property \textit{(ii)} comes from the fact that $\Pi(e^f_k)=e^f_k$ if $k\geq 0$, and $0$ otherwise.

Finally, if $v\in L^\infty$, then $v$ can be identified to a function $\tilde{v}$ such that $v=\tilde{v}$ almost everywhere and $|\tilde{v}(x)|\leq \|v\|_{L^\infty}$ for \emph{all} $x\in\mathbb{T}$. Now, if $x\in\mathbb{T}$, $\mathcal{T}_fv(e^{ix})=\mathcal{T}_f\tilde{v}(e^{ix})=\tilde{v}(e^{iy})$, where $y\in\mathbb{T}$ is such that $e^{iy}=e^{ix}f(e^{ix})$. This proves that $|\mathcal{T}_f\tilde{v}(e^{ix})|\leq \|v\|_{L^\infty}$, so the restiction of $\mathcal{T}_f$ to $L^\infty$ gives rise to a continuous endomorphism of $L^\infty$.

As we obviously have $\mathcal{T}_f1=1$, it remains to show that $\mathcal{T}_f(uv)=(\mathcal{T}_fu)(\mathcal{T}_fv)$ and $\mathcal{T}_f\bar{u}=\overline{\mathcal{T}_fu}$ whenever $u,v\in L^\infty$. But this is immediate once we have interpreted $\mathcal{T}_f$ as a composition operator.
\end{proof}

\begin{rem}
The converse of Lemma \ref{Tf_alg} is true : any operator $\mathcal{G}:L^2(\mathbb{T})\to L^2(\mathbb{T})$ that satisfies properties \textit{(i)}--\textit{(iii)} is of the form $\mathcal{T}_f$, where $f\in L^2_+$ is an inner function. It suffices to set $f(e^{ix}):=e^{-ix}\mathcal{G}(e^{ix})$ for $x\in \mathbb{T}$.
\end{rem}

\begin{proof}[Proof of Proposition \ref{invar}.]
Firstly, we note that since $u_0=\Pi(b)$ for some $b\in L^\infty(\mathbb{T})$, then $\mathcal{T}_fu_0=\mathcal{T}_f\Pi(b)=\Pi(\mathcal{T}_fb)\in BMO_+$. Indeed, $\mathcal{T}_fb\in L^\infty$ by the previous lemma.

If $u$ is the solution starting from $u_0$, then we get
\[\begin{aligned} i\partial_t(\mathcal{T}_fu)&=\mathcal{T}_f(i\partial_tu)\\
&=2J(u)\mathcal{T}_f\Pi(|u|^2)+\overline{J(u)}\mathcal{T}_f(u^2)\\
&=2J(u)\Pi(|\mathcal{T}_fu|^2)+\overline{J(u)}(\mathcal{T}_fu)^2.\end{aligned}\]
Furthermore, $J(u)=(u^2\vert u)=(\mathcal{T}_f(u^2)\vert \mathcal{T}_fu)=J(\mathcal{T}_fu)$, because of property \textit{(i)} of Lemma \ref{Tf_alg}. So $\mathcal{T}_fu$ solves equation \eqref{quad} in $BMO_+$, and $\mathcal{T}_f(u)(t=0)=\mathcal{T}_fu_0$.
\end{proof}

\begin{rem}
The mapping $\mathcal{T}_f$ also preserves the class of the standing waves in $BMO_+$, for if $\re u$ only takes two different values on $\mathbb{T}$, then so does $\re (\mathcal{T}_fu)$.
\end{rem}

\vspace*{3em}
\section{Stability and instability of the traveling waves}\label{stab}

Hereafter we consider the problem of the stability of the solutions that Theorem \ref{main} classifies.

\subsection{The ground state}
We begin with the ``ground state'', and show its stability via the functional inequality \eqref{gagliardo} as in \cite{BizonGround, ann}.

\begin{proof}[Proof of Proposition \ref{gagliardo-prop}.] Let $u\in H^{1/2}_+$, and write $u(z)=\sum_{k\geq 0}u_kz^{k}$. Then by Parserval's theorem and twice the Cauchy-Schwarz inequality,
\[\begin{aligned} E(u)=\frac{1}{2}|J(u)|^2&=\frac{1}{2}\left| \sum_{k,\ell\geq 0}u_ku_\ell\overline{u_{k+\ell}}\right|^2\\
&\leq \frac{1}{2}\left(\sum_{k\geq 0}|u_k|^2\right)\left(\sum_{k\geq 0}\left| \sum_{\ell \geq 0}u_\ell\overline{u_{k+\ell}}\right|^2\right) \\
&\leq \frac{1}{2}Q(u)\cdot \left(\sum_{\ell\geq 0}|u_\ell|^2\right)\left( \sum_{k,\ell\geq 0}|u_{k+\ell}|^2\right)\\
&=\frac{1}{2}Q(u)^2\left( \sum_{m\geq 0}(1+m)|u_m|^2\right) = \frac{1}{2}Q(u)^2(Q(u)+M(u)).
\end{aligned}\]
If in addition $2E=Q^2(Q+M)$, then the case of equality in Cauchy-Schwarz allows us to say that necessarily, $\forall k\geq 0$, there exists $\gamma_k\in\mathbb{C}$ such that
\[ u_\ell=\gamma_ku_{\ell+k},\quad \forall \ell\geq 0.\]
In particular, $u_\ell=\gamma_1u_{\ell+1}$, so $\{u_\ell\}$ must be a geometric sequence. There exists $\lambda,p\in\mathbb{C}$ such that
\[u_\ell=\lambda p^\ell,\quad\forall \ell\geq 0.\]
As $\sum_\ell |u_\ell|^2<\infty$, we further have $|p|<1$. Hence $u(z)=\frac{\lambda}{1-pz}$. Conversely, if $\{u_\ell\}$ is a geometric sequence, both inequalities in the previous computation become equalities.\end{proof}

Now it is standard to prove the $H^{1/2}$-stability of the traveling waves $z\mapsto\frac{\lambda}{1-pz}$ (which are even stationary waves in the case when $p=0$).

\begin{proof}[Proof of Corollary \ref{coro-stab}.] Let $\lambda,p\in\mathbb{C}$ with $|p|<1$, and $v_0(z)=\frac{\lambda}{1-pz}$. Simple computations show that
\[ Q(v_0)=\frac{|\lambda|^2}{1-|p|^2},\quad
M(v_0)=\frac{|\lambda|^2|p|^2}{(1-|p|^2)^2},\quad
E(v_0)=\frac{1}{2}\frac{|\lambda|^6}{(1-|p|^2)^4}.\]
Then by Proposition \ref{gagliardo-prop}, it appears that
\[ \{ e^{-i\theta}v_0(ze^{-i\alpha})\mid (\theta,\alpha)\in\mathbb{T}^2\}=\{u\in H^{1/2}_+\mid Q(u)=Q(v_0),\: M(u)=M(v_0),\: E(u)=E(v_0)\}.\]
In the sequel, we denote by $\mathcal{T}$ this two- (or maybe one-) dimensional torus, and we show that it is stable in the energy space.

Suppose that it is not. Then there exists a sequence $\{u_0^n\}$ of initial data in $H^{1/2}_+$ such that $d(u_0^n,\mathcal{T})\to 0$ (where the distance is taken in $H^{1/2}_+$), and a sequence of times $\{t^n\}$ such that the solution $u^n(t)$ of \eqref{quad} starting from $u_0^n$ at time $t=0$ satisfies
\begin{equation}\label{loin}
d(u^n(t^n),\mathcal{T})\geq \varepsilon_0>0,
\end{equation}
for some $\varepsilon_0>0$.

Because of the conservation laws, we know that $\{u^n(t^n)\}$ is bounded in $H^{1/2}_+$. Up to extracting a subsequence, there is an element $f$ of the energy space such that $u^n(t^n)\rightharpoonup f$. By Rellich's theorem, we know that $u^n(t^n)\to f$ strongly in any $L^p$, $p<\infty$, so
\begin{gather*}
Q(v_0)=\lim_{n\to \infty} Q(u_0^n)=\lim_{n\to\infty}Q(u^n(t^n))=Q(f),\\
E(v_0)=\lim_{n\to \infty} E(u_0^n)=\lim_{n\to\infty}E(u^n(t^n))=E(f).
\end{gather*}
By the weak convergence, we also have $M(f)\leq \liminf_{n\to\infty} M(u^n(t^n))=\liminf_{n\to\infty}M(u_0^n)=M(v_0)$. Hence
\[ E(f)\leq \frac{1}{2}Q(f)^2(Q(f)+M(f))\leq \frac{1}{2}Q(v_0)^2(Q(v_0)+M(v_0))=E(v_0)=E(f),\]
so all the inequalities are in fact equalities. Thus $M(f)=M(v_0)$, so $u^n(t^n)\to f$ strongly in $H^{1/2}$ and $f\in\mathcal{T}$. This is a contradiction with \eqref{loin}.
\end{proof}

\subsection{The second branch}
Now it remains to consider the case of the second branch of the traveling waves in Theorem \ref{main}. We first focus on the simplest one, of degree 1 in $z$, namely
\[ z\mapsto -\lambda\frac{1+|p|^2}{1-|p|^2}+\frac{\lambda}{1-pz}=-\lambda\frac{2|p|^2}{1-|p|^2}+\frac{\lambda p z}{1-pz},\]
where $\lambda,p\in\mathbb{C}\setminus\{0\}$, and $|p|<1$. Observe that thanks to time rescaling, and to the invariance of equation \eqref{quad} under phase rotation and phase translation, it is enough to study the stability of the following traveling waves :
\[ v_r: z\in\mathbb{D}\longmapsto -\frac{2r}{1-r}+\frac{z\sqrt{r}}{1-z\sqrt{r}},\quad r\in (0,1).\]

\begin{lemme}\label{instab-v3}
Let $0<r<1$ be fixed. Then $v_r$ is not $H^{1/2}_+$-orbitally stable.
\end{lemme}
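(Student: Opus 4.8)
The plan is to prove instability by exhibiting a one-parameter family of smooth initial data close to $v_r$ whose flow drifts away from the orbit $\mathcal{T}_r=\{e^{-i\theta}v_r(ze^{-i\alpha})\}$. Since $v_r\in\mathcal{V}(3)$ and $\mathcal{V}(3)$ is invariant by Corollary \ref{coro-v}, I would perturb \emph{inside} $\mathcal{V}(3)$, writing the nearby data as $u_0=b_0+\frac{c_0 z}{1-p_0 z}$ with $(b_0,c_0,p_0)$ close to the parameters $(b_*,c_*,p_*)=(-\frac{2r}{1-r},\sqrt r,\sqrt r)$ of $v_r$. On $\mathcal{V}(3)$ equation \eqref{quad} reduces to a system of three coupled ODEs on $(b,c,p)$, explicitly recalled in Appendix \ref{formulaire}; so the whole question becomes a finite-dimensional dynamical-systems problem about the fixed point (in the appropriate rotating/rescaled frame) corresponding to $v_r$.

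Concretely, the first step is to identify, in the reduced ODE system, the equilibrium associated with the traveling wave $v_r$ after factoring out the two trivial symmetries (the global phase $u\mapsto e^{-i\theta}u$ and the rotation $u\mapsto u(ze^{-i\alpha})$, under which $v_r$ is only \emph{relatively} equilibrium). These two symmetries correspond to two neutral directions; the orbit $\mathcal{T}_r$ is exactly the group orbit of $v_r$. The second step is to linearize the reduced flow transversally to $\mathcal{T}_r$ and compute the spectrum of the linearized operator: I expect to find an eigenvalue with nonzero real part (an unstable mode), or at least a mode along which a suitably chosen conserved-quantity-preserving perturbation grows at leading order. Because $Q$, $M$ and $E$ are conserved, a genuinely growing perturbation must move within a level set of these three functionals; so the right thing is to pick the perturbation tangent to $\{Q=Q(v_r),\,M=M(v_r),\,E=E(v_r)\}$ but transverse to $\mathcal{T}_r$, plug it into the reduced equations, and extract the leading-order secular term showing that $\inf_{(\theta,\alpha)}\|u(t)-e^{-i\theta}v_r(ze^{-i\alpha})\|_{H^{1/2}}$ cannot stay small. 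The third step is bookkeeping: translate the $\mathcal{V}(3)$-coordinate drift into an $H^{1/2}$ lower bound (the map $(b,c,p)\mapsto b+\frac{cz}{1-pz}$ is smooth and, near $v_r$, bi-Lipschitz onto its image in $H^{1/2}_+$, and one checks the drift direction is not absorbed by the two symmetry parameters), contradicting orbital stability.

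The main obstacle I anticipate is the explicit computation of the reduced equations of motion near $v_r$ and the verification that the relevant linearized mode is actually unstable rather than neutral: the Szeg\H{o}-type systems are famously delicate, and degree-$3$ rational solutions sit on the boundary between the stable ground-state family $\frac{\lambda}{1-pz}$ (Corollary \ref{coro-stab}) and higher-degree turbulent behavior, so a naive linearization could be purely imaginary and force one to go to second order. A robust way around this is to design the perturbation so that it stays on the common level set of $Q$, $M$, $E$ and shares the pole structure of $v_r$, and then to track a \emph{polynomial-in-$t$} growth of some scalar observable (for instance the distance of the pole $p(t)$ to the circle $|z|=\sqrt r$, or the phase mismatch between the $b$-part and the $\frac{cz}{1-pz}$-part); even a mode growing like $t$ or $t^2$ suffices to break orbital stability. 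Finally, for the general statement of Proposition \ref{instab} with arbitrary $\lambda$, $p$ and arbitrary $N\ge 1$, I would reduce to Lemma \ref{instab-v3}: the general $\lambda,p$ case follows from time rescaling plus the two phase invariances exactly as in the reduction preceding the lemma, and the passage from $N=1$ to general $N$ follows from Proposition \ref{invar} applied with the inner function $f(z)=z^{N-1}$, which conjugates the flow by the isometry $\mathcal{T}_f$ and hence preserves (in)stability of the orbit.
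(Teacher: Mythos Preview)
Your overall framework is correct and matches the paper: reduce to $\mathcal{V}(3)$, use the explicit ODEs of Appendix~\ref{formulaire}, and track a scalar observable to detect drift from the orbit. The reduction of the general $\lambda,p,N$ case to $v_r$ via rescaling, phase invariances, and Proposition~\ref{invar} is also exactly what the paper does.

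However, two concrete points diverge from the paper and, as written, leave your argument incomplete. First, the paper does \emph{not} linearize the six-dimensional $(b,c,p)$ system nor look for a spectral instability; instead it isolates a single real variable $x:=|c|\sqrt{M}$ and derives a closed first-order relation
\[
\Bigl(\frac{dx}{dt}\Bigr)^2=4x^2(Q+x)^2(Q-x)(M-x)-\bigl[(Q+x)^2(Q-x)+x^2(M-x)-\mathcal{E}\bigr]^2,
\]
obtained by eliminating the angle $\psi=\arg(b\bar c p)$ from the energy identity \eqref{energy-v3}. This scalar reduction is the engine of the proof and is absent from your plan. Second, your suggestion to perturb \emph{on} the joint level set $\{Q=Q_r,\,M=M_r,\,E=E_r\}$ is the opposite of what the paper does. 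The chosen perturbation is $u_0^\gamma(z)=-e^{i\gamma}\frac{2r}{1-r}+\frac{z\sqrt r}{1-z\sqrt r}$, which keeps $Q,M$ and the initial value $x(0)=x_r$ unchanged but \emph{raises} $\mathcal{E}$ by $\delta\mathcal{E}>0$ (only $\psi$ moves off $\pi$). Expanding the scalar relation at $x=x_r$ then gives
\[
\Bigl(\frac{dy}{dt}\Bigr)^2=\delta\mathcal{E}\cdot\bigl[a_r+\mathcal{O}_r(y)-\delta\mathcal{E}\bigr]+\bigl[-b_r+\mathcal{O}_r(y)\bigr]\cdot y,\qquad y:=x-x_r,
\]
with $a_r,b_r>0$. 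The strictly positive $\delta\mathcal{E}$ term is precisely what makes the argument first order: under the contradiction hypothesis that $|y|$ stays small one gets $(dy/dt)^2\ge a_r\delta\mathcal{E}$ for all $t$, hence $|y(t)|$ grows linearly. Had you stayed on $\mathcal{E}=\mathcal{E}_r$, the leading term would vanish and you would indeed be forced into the higher-order analysis you anticipate; the paper's choice of perturbation is exactly designed to avoid that. Finally, the passage from drift in $x$ to an $H^{1/2}$ lower bound is done not via a bi-Lipschitz chart but by a single Fourier coefficient: $(u(t)\mid z)=c(t)$, so $\|u(t)-e^{-i\theta}v_r(ze^{-i\alpha})\|_{H^{1/2}}\ge \bigl||c(t)|-\sqrt r\bigr|=|y(t)|/\sqrt{M_r}$ for every $(\theta,\alpha)$.
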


Before starting the proof of this lemma, let us recall how equation \eqref{quad} looks like on $\mathcal{V}(3)$. The exact formulae can be found in Appendix \ref{formulaire}. Let $u$ be a solution of \eqref{quad} of the form
\[ u(z)=b+\frac{cz}{1-pz},\quad \forall z\in\mathbb{D},\]
with $b,c,p\in \mathbb{C}$, $c\neq 0$, $c-bp\neq 0$, $|p|<1$. Then $Q$, $M$ and $J$ can be computed explicitely in terms of $b$, $c$, $p$ :
\begin{gather*}
Q=|b|^2+\frac{|c|^2}{1-|p|^2}, \\
M=\frac{|c|^2}{(1-|p|^2)^2}, \\
J=|b|^2b+\frac{2b|c|^2}{1-|p|^2}+\frac{|c|^2c\bar{p}}{(1-|p|^2)^2}.
\end{gather*}
Since $|c|=\sqrt{M}\cdot(1-|p|^2)$ and $|b|^2=Q-|c|\sqrt{M}$, we can rewrite $J$ in a simpler way as
\[J=(Q+|c|\sqrt{M})b+Mc\bar{p}.\]
Introduce the doubled energy $\mathcal{E}:=2E=|J|^2$. From the expression of $J$, we find
\[\mathcal{E}=(Q+|c|\sqrt{M})^2(Q-|c|\sqrt{M})+M|c|^2(M-|c|\sqrt{M})+2M(Q+|c|\sqrt{M})\re (b\bar{c}p).\]
Writing for simplicity $x:=|c|\sqrt{M}$, and defining $\psi$ by $\psi=\arg (b\bar{c}p)$ whenever $bp\neq 0$, and $\psi=0$ when $b=0$ or $p=0$, we get
\begin{equation}\label{energy-v3}
\mathcal{E}=(Q+x)^2(Q-x)+x^2(M-x)+2x(Q+x)\sqrt{(Q-x)(M-x)}\cdot\cos \psi.
\end{equation}

As far as the evolution of $x$ is concerned, recall that
\[\begin{aligned}i\frac{dc}{dt} &=2bc\bar{J}+2\bar{b}cJ + \frac{2Jp|c|^2}{1-|p|^2}\\
&=\left[4\re (\bar{b}J)+2M\frac{|p|^2|c|^2}{1-|p|^2}\right]c+2|c|^2(Q+|c|\sqrt{M})\frac{bp}{1-|p|^2},\end{aligned}\]
hence
\[ \frac{d|c|}{dt}=\frac{1}{|c|}\re \left( \bar{c}\frac{dc}{dt}\right)=2|c|(Q+|c|\sqrt{M})\frac{\im (b\bar{c}p)}{1-|p|^2},\]
or equivalently,
\[\frac{dx}{dt}=2x(Q+x)\sqrt{(Q-x)(M-x)}\cdot\sin \psi.\]
Using \eqref{energy-v3}, and writing $(\sin \psi)^2=1-(\cos \psi)^2$, we can finally express the evolution of $x$ in terms of conservation laws only :
\begin{equation}\label{evol-x}
\left( \frac{dx}{dt}\right)^2=4x^2(Q+x)^2(Q-x)(M-x)-\left[(Q+x)^2(Q-x)+x^2(M-x)-\mathcal{E}\right]^2.
\end{equation}

\vspace{1em}
\begin{proof}[Proof of Lemma \ref{instab-v3}.]
Thanks to the above formulae, we can easily compute
\begin{align*} Q_r:=Q(v_r)=\frac{r}{(1-r)^2}(3r+1),\qquad M_r:=M(v_r)&=\frac{r}{(1-r)^2},\\
J_r:=J(v_r)=-\frac{r^2}{(1-r)^3}(5r+3),\qquad \mathcal{E}_r=|J_r|^2&=\frac{r^4}{(1-r)^6}(5r+3)^2.
\end{align*}
Besides, since $v_r$ gives rise to traveling wave solution, $|c|$ (hence $x$) will be constant, so $\forall t\in\mathbb{R}$, $x(t)=x_r:=\sqrt{r}\sqrt{M}=\frac{r}{1-r}$. For the same reason, $\psi (t)= \psi_r:=\pi$. Equation \eqref{evol-x} then gives
\begin{equation}\label{eq-xr}
0=4x_r^2(Q_r+x_r)^2(Q_r-x_r)(M_r-x_r)-\left[(Q_r+x_r)^2(Q_r-x_r)+x_r^2(M_r-x_r)-\mathcal{E}_r\right]^2.
\end{equation}

Now we are going to perturb equation \eqref{evol-x} around $x_r$ and $\mathcal{E}_r$ without changing $Q_r$ nor $M_r$. Let $\gamma>0$ be a small enough parameter (in a sense that we will further make clear), and set
\[ u_0^\gamma(z)=-e^{i\gamma}\frac{2r}{1-r}+\frac{z\sqrt{r}}{1-z\sqrt{r}}, \quad \forall z\in\mathbb{D}.\]
We have $\|u_0^\gamma-v_r\|_{H^{1/2}}=\frac{2r}{1-r}|1-e^{i\gamma}|=\mathcal{O}_r(\gamma)$, where the notation $\mathcal{O}_r$ means that the constant in the $\mathcal{O}$ only depends on $r$. In addition, $Q(u_0^\gamma)=Q_r$, $M(u_0^\gamma)=M_r$. Computing $\mathcal{E}(u_0^\gamma)$ thanks to \eqref{energy-v3}, we see that only the angle $\psi$ is modified at time $0$, and changes from $\pi$ to $\pi +\gamma$. If $\gamma$ is sufficiently small, we have $\cos (\pi +\gamma)>-1$, and thus we can write $\mathcal{E}(u_0^\gamma)=\mathcal{E}_r+\delta\mathcal{E}$, where $\delta\mathcal{E}>0$ and $\delta\mathcal{E}=\mathcal{O}_r(\gamma)$.

Denote by $u$ (with an implicit dependence on $\gamma$) the solution of \eqref{quad} starting from $u_0^\gamma$. Write $x=|c|\sqrt{M}$ as above, and decompose $x=x_r+y$, where $y$ is initially zero, and is meant to remain small. Equation \eqref{evol-x} yields
\begin{multline*}\left( \frac{dy}{dt}\right)^2=4(x_r+y)^2(Q_r+x_r+y)^2(Q_r-x_r-y)(M_r-x_r-y)\\
-\left[(Q_r+x_r+y)^2(Q_r-x_r-y)+(x_r+y)^2(M_r-x_r-y)-\mathcal{E}_r-\delta\mathcal{E}\right]^2.
\end{multline*}
We must develop this expression. The leading order term vanishes because of \eqref{eq-xr}. Handling the $\delta\mathcal{E}$-terms with care, we can write
\begin{align*}
\left(\frac{dy}{dt}\right)^2=\delta\mathcal{E}&\cdot\left[-2\mathcal{E}_r+2(Q_r+x_r)^2(Q_r-x_r)+2x_r^2(M_r-x_r)+\mathcal{O}_r(y)-\delta\mathcal{E}\right]\\
&+y \cdot \left[ \frac{d}{dy}\left. F(y)\right|_{y=0} \right] \\
&+\mathcal{O}_r(y^2),
\end{align*}
where
\begin{align*}
F(y)=4(x_r+&y)^2(Q_r+x_r+y)^2(Q_r-x_r-y)(M_r-x_r-y)\\
&+2\mathcal{E}_r\cdot \left[ (Q_r+x_r+y)^2(Q_r-x_r-y)+(x_r+y)^2(M_r-x_r-y)\right]\\
&-\left[ (Q_r+x_r+y)^2(Q_r-x_r-y)+(x_r+y)^2(M_r-x_r-y)\right]^2.
\end{align*}
Here, the important point is that all the $\mathcal{O}$ do not depend on $\gamma$ nor on $\delta\mathcal{E}$. After a tedious calculation, we get
\begin{equation}\label{evol-y}
\left(\frac{dy}{dt}\right)^2=\delta\mathcal{E}\cdot\left[\frac{16r^4(1+r)}{(1-r)^5}+\mathcal{O}_r(y)-\delta\mathcal{E}\right]+\left[- \frac{64r^7(1+r)^2}{(1-r)^9}+\mathcal{O}_r(y)\right] \cdot y.
\end{equation}

Now, assume that $v_r$ is $H^{1/2}$-orbitally stable. Take $\varepsilon>0$ to be adjusted, and the corresponding $\eta>0$. Choose $\gamma_*>0$ small so that $\forall \gamma\in (0,\gamma_*)$, we have $\|u_0^{\gamma}-v_r\|_{H^{1/2}}\leq \eta$, and consider $u$ the corresponding solution. With the above notations for $u$, observe that for any $(\theta,\alpha)\in\mathbb{T}^2$, and $t\in\mathbb{R}$, 
\begin{align*}
\|u(t,z)-e^{-i\theta}v_r(ze^{-i\alpha})\|_{H^{1/2}}&\geq \left|\left( u(t,z)-e^{-i\theta}v_r(ze^{-i\alpha})\middle| z\right) \right| \\
&=|c(t)-e^{-i(\theta+\alpha)}\sqrt{r}| \\
&\geq \left| |c(t)|-\sqrt{r}\right| =\frac{1}{\sqrt{M_r}}|y(t)|
\end{align*}
so $\forall t\in\mathbb{R}$,
\[|y(t)|\leq \sqrt{M_r}\cdot \inf_{(\theta,\alpha)\in\mathbb{T}^2} \|u(t)-e^{-i\theta}v_r(ze^{-i\alpha})\|_{H^{1/2}} \leq \sqrt{M_r} \cdot \varepsilon \]
because of the stability. Going back to \eqref{evol-y}, taking $\varepsilon$ small enough (in a way that only depends on $r$), and shrinking $\gamma_*$ if needed to lessen $\delta\mathcal{E}$, we get
\begin{gather*} \frac{16r^4(1+r)}{(1-r)^5}+\mathcal{O}_r(y)-\delta\mathcal{E} \geq a_r, \\
- \frac{64r^7(1+r)^2}{(1-r)^9}+\mathcal{O}_r(y)\leq -b_r,
\end{gather*}
for all times $t\in\mathbb{R}$, where $a_r,b_r>0$ are some constants only depending on $r$.

At time $t=0$, we have $y=0$, so
\[\left( \frac{dy}{dt}|_{t=0}\right)^2\geq a_r\delta\mathcal{E}>0.\]
We suppose that $dy/dt <0$ at time $t=0$ (it suffices to argue on negative times if the converse is true). Then $y<0$ in some maximal time interval $(0,T)$. Thus \eqref{evol-y} yields
\[ \left(\frac{dy}{dt}\right)^2\geq a_r\delta\mathcal{E}+b_r |y|\geq a_r\delta\mathcal{E} \quad \text{on }(0,T),\]
and since $y$ is $C^\infty$, we have
\[ \frac{dy}{dt}<-\sqrt{a_r\delta\mathcal{E}} \quad \text{on }(0,T).\]
This yields $T=+\infty$ and $\forall t\in\mathbb{R}_+$, $y(t)\leq -t\sqrt{a_r\delta\mathcal{E}}$, which cannot happen. This ends the proof of the instability of $v_r$.
\end{proof}

Let us now prove the instability of the traveling waves of higher degree.
\begin{proof}[Proof of Proposition \ref{instab}.] In Lemma \ref{instab-v3}, we have proved the existence of some $\varepsilon_0>0$, and constructed initial data $u_0^\gamma$ arbitrary close to $v_r$ in $H^{1/2}_+$, such that the corresponding solution $u$ satisfies $\|u(T,z)-e^{-i\theta}v_r(ze^{-i\alpha})\|_{H^{1/2}}\geq \varepsilon_0>0$ for some $T\in \mathbb{R}$, and all $(\theta,\alpha)\in \mathbb{T}^2$. If now $N\geq 2$, Proposition \ref{invar} precisely states that the solution of \eqref{quad} starting from $u_0^\gamma(z^N)$ is $u(t,z^N)$. On the one hand, we have
\[ \|u_0^\gamma(z^N)-v_r(z^N)\|_{H^{1/2}}=\|u_0^\gamma(z)-v_r(z)\|_{H^{1/2}}\ll 1,\]
since $u_0^\gamma$ only differs from $v_r$ by its constant term. On the other hand, for all $(\theta,\alpha)\in \mathbb{T}^2$,
\[ \|u(T,z^N)-e^{-i\theta}v_r\left((ze^{-i\alpha})^N\right)\|_{H^{1/2}}\geq \|u(T,z)-e^{-i\theta}v_r(ze^{-iN\alpha})\|_{H^{1/2}}\geq \varepsilon_0,\]
where we used the elementary fact that for any $w\in H^{1/2}_+$, 
\[ \|w(z^N)\|_{H^{1/2}}^2=\sum_{k\geq 0}(1+kN)|\hat{w}(k)|^2\geq \sum_{k\geq 0}(1+k)|\hat{w}(k)|^2 =\|w\|_{H^{1/2}}^2. \]
This concludes our proof.
\end{proof}

\appendix
\vspace*{2em}
\section*{Appendices}
\section{About equilibrium points}\label{steady}
The purpose of this section is to discuss the case of the steady solutions of \eqref{quad}. Notice first that if for some $u\in H^{1/2}_+$, $2J\Pi (|u|^2)+\bar{J}u^2=0$, then taking the scalar product with $u$, we find $3|J|^2=0$, hence $E=0$. Hence all steady solutions are issued from null-energy functions. Describing the set of steady solutions, say, in $H^{1/2}_+$ amounts to giving a characterization of the set $\{J=0\}$.

As an illustration of how tough this may be, we are now going to give an explicit description of the subset of the steady solutions that are also in $\mathcal{V}(3)$ thanks to the inverse transform of Gérard and Grellier \cite{livrePG}.

\begin{prop}
The subset of $\mathcal{V}(3)$ consisting in steady solutions of \eqref{quad}, called $\mathcal{V}_0(3)$, is a submanifold of real codimension $2$ given by
\begin{equation*}\mathcal{V}_0(3)= \left\lbrace \lambda e^{ia}\left( -\tfrac{2\sqrt{3}}{3}\sin \theta + \frac{\frac{(1+2\cos 2\theta )^2}{3\sqrt{9+2\cos 2\theta - 2\cos 4\theta}}ze^{ib}}{1-\frac{4 (2+\cos 2\theta )\sin \theta}{\sqrt{3} \sqrt{9+2\cos 2\theta - 2\cos 4\theta}}ze^{ib}} \right) \ \middle| \ \lambda\in \mathbb{R},\ a, b \in \mathbb{T},\theta \in [0,\tfrac{\pi}{3}) \right\rbrace  .
\end{equation*}
\end{prop}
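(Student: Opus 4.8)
The starting observation, already made at the top of Appendix~\ref{steady}, is that taking the scalar product of $2J\Pi(|u|^2)+\bar Ju^2=0$ with $u$ gives $3|J|^2=0$; hence $u$ is a steady solution precisely when $J(u)=0$, so $\mathcal V_0(3)=\{u\in\mathcal V(3)\mid J(u)=0\}$. The statement then splits into two tasks: showing this set is an embedded submanifold of real codimension~$2$, and computing its elements explicitly.

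For the first task I would work in the coordinates $(\beta,c,p)$ on $\mathcal V(3)$, where $u(z)=\beta+\frac{cz}{1-pz}$ with $c\neq 0$, $c-\beta p\neq 0$, $|p|<1$, and use the formula $J=(Q+|c|\sqrt M)\beta+Mc\bar p$ recalled in Section~\ref{stab}. Setting $x:=|c|\sqrt M=|c|^2/(1-|p|^2)$, one computes $\partial_\beta J=2|\beta|^2+2x$ and $\partial_{\bar\beta}J=\beta^2$; since $c\neq 0$ forces $2|\beta|^2+2x>|\beta|^2$, the $\mathbb R$-linear map $h\mapsto(2|\beta|^2+2x)h+\beta^2\bar h$ is already invertible on $\mathbb C$. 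Therefore $J\colon\mathcal V(3)\to\mathbb C$ is a submersion and $\mathcal V_0(3)=J^{-1}(0)$ is an embedded submanifold of real codimension~$2$, i.e.\ of real dimension~$4$, matching the four parameters $\lambda,a,b,\theta$ of the statement.

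The explicit description I would obtain by first exploiting invariances. The locus $\{J=0\}$ is preserved by the scaling $u\mapsto\mu u$ ($\mu>0$), the phase rotation $u\mapsto e^{i\phi}u$ and the phase translation $u\mapsto u(ze^{-i\alpha})$, acting on $(\beta,c,p)$ by $(\mu\beta,\mu c,p)$, $(e^{i\phi}\beta,e^{i\phi}c,p)$ and $(\beta,ce^{-i\alpha},pe^{-i\alpha})$. Composing the three, one may assume $c=1$ and $p\in[0,1)$; then $Q+x$ and $Mc\bar p=Mp$ are real, so $J=0$ forces $\beta$ real and $\leq 0$. Inserting $M=(1-p^2)^{-2}$, $x=(1-p^2)^{-1}$ into $J=0$ gives the single algebraic relation $t^2\beta^3+2t\beta+p=0$ with $t:=1-p^2\in(0,1]$; viewed as a cubic in $\beta$ with positive leading coefficient and everywhere positive derivative, it has a unique real root, which is $\leq 0$. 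So, up to the three invariances, $\mathcal V_0(3)$ is a smooth arc parametrized by $p\in[0,1)$, and the displayed family is recovered by undoing the normalization, the factor $\lambda e^{ia}$ restoring scale and phase and the substitution $z\mapsto ze^{ib}$ the phase translation.

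It remains to recast this arc in the closed trigonometric form. The cleanest route is through the inverse spectral transform of \cite{livrePG}: for a generic $u\in\mathcal V(3)$, $H_u$ has two simple positive singular values $\rho_1>\rho_2$ and $K_u$ a single one $\sigma_1$, with $\rho_1>\sigma_1>\rho_2>0$; decomposing $1=v_1+v_2+v_0$ along $E_u(\rho_1)\oplus E_u(\rho_2)\oplus\ker H_u$ and using that $H_u$ acts on the line $E_u(\rho_j)$ by $v_j\mapsto\chi_jv_j$ with $|\chi_j|=\rho_j$, one gets $J=(H_u^3(1)\mid 1)=\rho_1^2\chi_1\|v_1\|^2+\rho_2^2\chi_2\|v_2\|^2$. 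Feeding in the Gérard--Grellier formulas for $\|v_j\|^2$ as rational functions of $\rho_1^2,\rho_2^2,\sigma_1^2$, the equation $J=0$ becomes a phase condition $\chi_1/\rho_1=-\chi_2/\rho_2$ together with one polynomial relation among $\rho_1^2,\rho_2^2,\sigma_1^2$; one parametrizes the solutions of that relation by $\theta\in[0,\tfrac{\pi}{3})$ and runs the inversion formula — equivalently, one solves $t^2\beta^3+2t\beta+p=0$ under the corresponding trigonometric substitution. This yields exactly the displayed function, and the range $[0,\tfrac{\pi}{3})$ is forced by the constraint $c\neq 0$ defining $\mathcal V(3)$: in the chosen normalization $c$ is proportional to $(1+2\cos2\theta)^2=(\sin 3\theta/\sin\theta)^2$, which vanishes at $\theta=\tfrac{\pi}{3}$ (there $u$ degenerates to a constant, an element of $\mathcal V(1)$, not $\mathcal V(3)$). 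The main obstacle is precisely this last step: pinning down the right substitution and pushing through the (elementary but heavy) verification of the trigonometric identities; everything preceding it is formal.
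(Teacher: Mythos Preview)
Your proposal is correct and its spectral--inverse-transform route is essentially the paper's. The paper normalizes $Q=1$ (rather than $c=1$), projects $u$ (rather than $1$) onto the eigenspaces of $H_u^2$, and writes $J=0$ as $\rho_1 e^{-i\varphi_1}\|u_1\|^2+\rho_2 e^{-i\varphi_2}\|u_2\|^2=0$; the phase conclusion is the same as your $\chi_1/\rho_1=-\chi_2/\rho_2$, and the ``one polynomial relation'' you allude to is made explicit there as $\rho_1^2-\rho_1\rho_2+\rho_2^2=\rho_1^2\rho_2^2$, parametrized via the ellipse $x^2-xy+y^2=1$ with $(x,y)=(1/\rho_1,1/\rho_2)=\tfrac{2}{\sqrt3}(\cos(\theta+\tfrac{\pi}{6}),\cos(\theta-\tfrac{\pi}{6}))$, after which the inverse formula with the matrix $\mathscr{C}(z)$ produces the displayed expression. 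Two things you add that the paper does not: the clean submersion argument in $(\beta,c,p)$ coordinates, which actually \emph{proves} the codimension-$2$ claim (the paper only asserts it and lets the four-parameter family speak for itself), and the alternative reduction to the real cubic $t^2\beta^3+2t\beta+p=0$; the latter is a nice shortcut to the one-arc picture but, as you note, does not by itself yield the trigonometric form, which is why both you and the paper ultimately pass through the spectral data. The degenerate endpoint $\theta=0$ corresponds in the paper to the Blaschke case $\rho_1=\rho_2$, treated separately there and absorbed into your interval $[0,\tfrac{\pi}{3})$.
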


\begin{proof}[Proof]
Let us fix $u$ a function of $\mathcal{V}(3)$ such that $J(u)=0$. We suppose that $u$ is not identically zero, and rescale it so that $Q(u)=1$. Denote by $\sigma^2$ the unique non-zero eigenvalue of $K_u^2$. We have $\sigma^2=\tr K_u^2=M$. Moreover, it is an elementary fact (see \cite{GGtori}) that eigenvalues of $H_u^2$ and $K_u^2$ are interlaced. As $\rg H_u^2 = 2$, we denote by $\rho_1^2$, $\rho_2^2$ its eigenvalues, in such a way that
\begin{equation}\label{interlaced}
\rho_1^2\geq \sigma^2 \geq \rho_2^2\geq 0.
\end{equation}
In addition, we have $\rho_1^2+\rho_2^2=\tr H_u^2=Q+M$, hence 
\begin{equation}\label{rho-sigma}
\rho_1^2-\sigma^2+\rho_2^2=Q=1.
\end{equation}

The case when $\rho_1=\rho_2$ can be treated first, because it corresponds to a Blaschke product of degree $1$. In that case, $u$ should be of the form
\[ u(z)=e^{ia}\frac{z-\bar{p}}{1-pz}, \quad\forall z\in\mathbb{D},\]
for some $p\in\mathbb{D}$ and $a\in\mathbb{T}$. However, for such a function, $J(u)=\int_\mathbb{T}|u|^2u=(u\vert 1)=-e^{ia}\bar{p}$. Therefore, $p=0$ and $u(z)=e^{ia}z$. From now on, we assume that $\rho_1>\rho_2$ in \eqref{interlaced}.

Let us now briefly recall some notations. We denote by $E_u(\rho_j)$ the eigenspace of $H_u^2$ associated to the eigenvalue $\rho_j^2$, $j\in \{1,2\}$, and we define $u_j$ to be the orthogonal projection of $u$ onto $E_u(\rho_j)$. Since $u=H_u(1)\in\ran H_u=\ran H_u^2$, we have $u=u_1+u_2$ with $u_1\perp u_2$. We can also characterize the action of $H_u$ on $u_j$, noting that $\dim E_u(\rho_j^2) = 1$ : there exists $\varphi_1,\varphi_2\in\mathbb{T}$ such that
\[ \left\lbrace \begin{aligned}
H_u(u_1)&=\rho_1e^{i\varphi_1}u_1, \\
H_u(u_2)&=\rho_2e^{i\varphi_2}u_2.
\end{aligned}\right. \]
Last of all, it is possible to compute the $L^2$ norm of the $u_j$'s in terms of $\rho_1^2$, $\sigma^2$, $\rho_2^2$ (see \emph{e.g. }\cite{Xu3}) :
\[ \|u_1\|^2=\rho_1^2\frac{\rho_1^2-\sigma^2}{\rho_1^2-\rho_2^2}, \quad \|u_2\|^2=\rho_2^2\frac{\sigma^2-\rho_2^2}{\rho_1^2-\rho_2^2}.
\]

Using the decomposition $u=u_1+u_2$, we get
\[ 0=J(u)=(u\vert H_u(u))=\rho_1e^{-i\varphi_1}\|u_1\|^2+\rho_2e^{-i\varphi_2}\|u_2\|^2.\]
Up to a rotation of $u$, we can assume that $\varphi_1=0$. As $\|u_2\|^2\neq 0$, $e^{i\varphi_2}$ must then be real, so necessarily, $\varphi_2=\pi$. Now using the expression of $\|u_1\|^2$, $\|u_2\|^2$, and multiplying by $\rho_1^2-\rho_2^2$, we get
\[ \rho_1^3(\rho_1^2-\sigma^2)-\rho_2^3(\sigma^2-\rho_2^2)=0.\]
By \eqref{rho-sigma}, $\rho_1^2-\sigma^2=1-\rho_2^2$ and $\sigma^2-\rho_2^2=\rho_1^2-1$. Hence $\rho_1^3+\rho_2^3=\rho_1^2\rho_2^2(\rho_1+\rho_2)$, or rather
\[ \rho_1^2-\rho_1\rho_2+\rho_2^2=\rho_1^2\rho_2^2. \]
This equation means that $x:=\frac{1}{\rho_1}$ and $y:=\frac{1}{\rho_2}$ belong to the ellipse of equation $x^2-xy+y^2=1$. A standard reduction procedure indicates that all the points of this ellipse can be written as $(x,y)=\frac{2}{\sqrt{3}}(\cos (\theta +\frac{\pi}{6}),\cos (\theta -\frac{\pi}{6}))$, for some $\theta\in\mathbb{R}$. However, we must have $y> x$ and $x,y> 0$, which imposes $\theta \in (0,\frac{\pi}{3})$.

The matrix involved in the inverse spectral formula reads
\[ \mathscr{C}(z):=\begin{pmatrix}
\dfrac{\rho_1-\sigma e^{i\psi}z}{\rho_1^2-\sigma^2} & \dfrac{1}{\rho_1}\\
\dfrac{\rho_2+\sigma e^{i\psi}z}{\sigma^2 -\rho_2^2} & -\dfrac{1}{\rho_2}
\end{pmatrix}, \quad z\in\mathbb{D}.
\]
A translation in $z$ enables to reduce ourselves to the case when the angle associated to $\sigma^2/2$, called $\psi$, is zero. Then we can reconstruct
\[ u(z)=\left\langle \mathscr{C}(z)^{-1}\begin{pmatrix}
1 \\ 1
\end{pmatrix}, \begin{pmatrix}
1 \\ 1
\end{pmatrix} \right\rangle_{\mathbb{C}^2}=\tfrac{1-\rho_1\rho_2}{\rho_1-\rho_2} - \frac{z}{\frac{\sigma(\rho_1-\rho_2)^2}{1-\rho_1\rho_2}+(1+\rho_1\rho_2)(\rho_1-\rho_2)z},\]
and $\sigma=\sqrt{\rho_1^2+\rho_2^2-1}$, which leads to the claim, by means of a few trigonometric identities.
\end{proof}

\begin{eg}
The function
\[ u(z)= -\frac{\sqrt{3}}{3}+\frac{\frac{4}{3\sqrt{11}}z}{1-\frac{5}{\sqrt{33}}z}\]
corresponds to an equilibrium point of \eqref{quad}.
\end{eg}

\section{Formulae for solutions in \texorpdfstring{$\mathcal{V}(3)$}{V3}}\label{formulaire}

We summarize below some useful formulae from \cite[Section 4]{thirouin2} for solutions of \eqref{quad} of the form
\[ u(z)=b+\frac{cz}{1-pz},\quad\forall z\in\mathbb{D},\]
where $b,c,p\in\mathbb{C}$, and $|p|<1$.

We have
\begin{gather*}
Q=|b|^2+\frac{|c|^2}{1-|p|^2}, \\
M=\frac{|c|^2}{(1-|p|^2)^2}, \\
J=|b|^2b+\frac{2b|c|^2}{1-|p|^2}+\frac{|c|^2c\bar{p}}{(1-|p|^2)^2}.
\end{gather*}

As far as the evolution of $u$ is concerned, \eqref{quad} translates into
\[\left\lbrace
\begin{aligned}
i\dot{p} &=c\bar{J},\\
i\dot{c} &=2bc\bar{J}+2\bar{b}cJ + \frac{2Jp|c|^2}{1-|p|^2},\\
i\dot{b} &=b^2\bar{J}+2|b|^2J+\frac{2J|c|^2}{1-|p|^2}.
\end{aligned}\right.
\]

\vspace{0,5cm}
\bibliography{mabiblio}
\bibliographystyle{plain}

\vspace{0,5cm}
\textsc{Département de mathématiques et applications, École normale supérieure,
CNRS, PSL Research University, 75005 Paris, France}

\textit{E-mail address: }\texttt{joseph.thirouin@ens.fr}

\end{document}